\newtheorem{theo}{Theorem}[section]}
\newtheorem{corollaire}[theo]{Corollary}}
\newtheorem{problem}[theo]{Problem}}
\theoremstyle{definition} \newtheorem{defin}[theo]{Definition}
							\newtheorem{prop}[theo]{Proposition}
							\newtheorem{lemme}[theo]{Lemma}}
\theoremstyle{remark} \newtheorem{remarque}[theo]{Remark}
						\newtheorem{exemple}[theo]{Example}}
\newtheorem*{DuplicateWhiteheadSimplicial}{Theorem~\ref{WhiteheadSimplicial}}
\newtheorem*{DuplicateWhitehead}{Theorem~\ref{Whitehead}}
\newtheorem*{DuplicateAlmostMiller}{Theorem~\ref{AlmostMiller}}
\newcommand\mapsfrom{\mathrel{\reflectbox{\ensuremath{\mapsto}}}}	\newcommand{\Id}{Id}	
\newcommand{\Set}{\text{Set}}
\newcommand{\op}{op}
\newcommand{\sS}{\text{sSet}}
\newcommand{\Hom}{\text{Hom}}
\newcommand{\Top}{\text{Top}}
\newcommand{\Real}[1]{{}||#1||}
\newcommand{\RealP}[1]{{}\Real{#1}_P}
\newcommand{\Sing}{\text{Sing}}
\newcommand{\pr}{pr}
\newcommand{\colim}{\operatornamewithlimits{colim}}
\newcommand{\Ex}{\text{Ex}}
\newcommand{\sd}{\text{sd}}
\newcommand{\Exi}{\text{Ex}^{\infty}}
\newcommand{\lv}{\text{l.v}}
\newcommand{\Map}{\text{Map}}
\newcommand{\Fun}{\text{Fun}}
\newcommand{\G}{\mathcal{G}}
\newcommand{\nd}{\text{n.d.}}
\newcommand{\holim}{\text{holim}}
\newcommand{\Grp}{\text{Grp}}
\newcommand{\ev}{\text{ev}}
\newcommand{\Cont}{\mathcal{C}^0}
\newcommand{\Hol}{\text{Holink}}
\newcommand{\proj}{\text{pr}}
\newcommand{\OSk}{\text{OSk}}
\newcommand{\Sp}{\text{Sp}}
\newcommand{\Z}{\mathbb{Z}}
\newcommand{\An}{\text{An}}
\newcommand{\C}{\mathcal{C}}
\newcommand{\Ei}{E^{\infty}}
\newcommand{\Er}{\text{Er}}
\title{A simplicial approach to stratified homotopy theory}
\author{Sylvain Douteau}
\begin{document}
\begin{abstract}
In this article we consider the homotopy theory of stratified spaces from a simplicial point of view. We first consider a model category of filtered simplicial sets over some fixed poset $P$, and show that it is a simplicial combinatorial model category. We then define a generalization of the homotopy groups for any fibrant filtered simplicial set $X$ : the filtered homotopy groups $s\pi_n(X)$. They are diagrams of groups built from the homotopy groups of the different pieces of $X$. We then show that the weak equivalences are exactly the morphisms that induce isomorphisms on those filtered homotopy groups.

Then, using filtered versions of the topological realisation of a simplicial set and of the simplicial set of singular simplices, we transfer those results to a category whose objects are topological spaces stratified over $P$. In particular, we get a stratified version of Whitehead's theorem. 
Specializing to the case of conically stratified spaces, a wide class of topological stratified spaces, we recover a theorem of Miller saying that to understand the homotopy type of conically stratified spaces, one only has to understand the homotopy type of strata and holinks. We then provide a family of examples of conically stratified spaces and of computations of their filtered homotopy groups. 
\end{abstract}
\maketitle

\tableofcontents

\section*{Introduction}

Stratified spaces appear in many domains of geometry and topology as natural objects to study singular spaces. Stratification theory has its roots in the seminal works of Thom and Whitney on decompositions of algebraic and analytic varieties into smooth pieces called strata (\cite{Thom}, \cite{Whitney}, see also \cite{GoreskyIntro}, \cite{Mather}).
Nowadays, stratifications appear through poset-stratified spaces (\cite{WoolfFundamentalCategory}) - spaces with a continuous map to a poset with the Alexandrov topology, $X\to P$ -  with a particular focus on conically stratified spaces \cite{HigherAlgebra}, \cite{AyalaFrancisTanakaLocalStructure}.

Moreover, in the 1970s, motivated by the theory of characteristic classes of singular spaces, Goresky and MacPherson introduced intersection cohomology in order to restore Poincaré duality in the framework of stratified spaces \cite{IntersectionHomologyI}, \cite{IntersectionHomologyII}. 
Since then, intersection cohomology has become an important tool in complex algebraic geometry and in geometric representation theory as the building block of the category of perverse sheaves \cite {BBD} \cite{Dimca} \cite{KirwanWoolf} .

Since the introduction of intersection homology, other invariants of stratified spaces have been developed. For examples :
\begin{itemize}
\item The Exit Paths Category of a stratified space, and its higher categorical generalizations have been considered by Treumann \cite{Treumann}, Woolf \cite{WoolfFundamentalCategory} and also Lurie \cite{HigherAlgebra}.
\item Intersection homotopy groups have been introduced by Gajer \cite{Gajer}.
\item Singular chain complexes and cochain complexes together with cap and cup products have been developed for intersection homology and cohomology by King \cite{King} and Friedman and McClure \cite{FriedmanMcClure} \cite{Friedman}
\item A rational homotopy theory, in the spirit of Sullivan's theory has been developed by Chataur, Saralegui and Tanré in \cite{MemoireDavid}.
\item Banagl has also introduced rational generalized intersection homology theories \cite{BanaglRationalGeneralizedHomology}.
\end{itemize}

But those objects are not invariants in a classical sense; they are only invariant under \textbf{stratified} homotopy equivalences.  Which begs the question : What is the homotopy category through which those invariants factor? This question was originally formulated about intersection cohomology by Goresky and MacPherson \cite[Problem 4]{Borel}, and is also relevant for invariants of stratified spaces that have been constructed since. Furthermore, once a homotopy category for stratified spaces is known, one can start to investigate which functors defined on stratified spaces are representable in a homotopy-theoretical sense. The first example of interest - for which an answer is not known yet - is intersection cohomology. Goresky and MacPherson asked if it was representable, and an affirmative answer could help define generalized intersection cohomology theories through some theory of stratified spectra see \cite[Problems 1 and 11]{Borel}. 

But a homotopy category of stratified spaces also opens up the possibility to study more general functors from the category of stratified spaces.
For example, let $G$ be a compact Lie group, and $C$ a $G$-CW complex. The quotient $Q=C/G$ comes equipped with a natural stratification over $P_{G}$, the poset of conjugacy classes of closed subgroup of $G$, defined by 
\begin{align*}
Q&\to P_G\\
x&\mapsto [\text{Stab}_x]
\end{align*}
where $[\text{Stab}_x]$ is the conjugacy class of stabilizers of preimages of $x$ in $C$. Now, given $R$, some $CW$ complex stratified over $P_G$, and any stratified cellular map $f\colon R\to Q$, the pullback $f^{-1}(C)$ is a $G$-CW complex whose quotient is $R$. This means that there is a functor associating to any $CW$ complex stratified over $P_G$, $Q$, the set of isomorphism classes of $G-CW$ complex with quotient $Q$,
\begin{align*}
\text{$P_G$-stratified CW complexes}&\to \Set^{\op}\\
Q&\mapsto \{C\ |\ C/G\simeq Q\}/{\sim}
\end{align*}
This functor has a well-known "classical" counterpart, the (sub)functor mapping a (trivially stratified) $CW$-complex, $Q$, to the set of principal $G$-bundle with basis $Q$. In the homotopy category, it is represented by $BG$, the classifying space of $G$. One can ask the same question about the stratified version of this functor, and although the answer is unknown yet, the homotopy category of $P_G$-stratified spaces provides a setting to work on this problem.

Relatedly, Ayala, Francis and Tanaka defined conically smooth stratified spaces, and a notion of constructible smooth bundle between them \cite[Definition 3.6.1]{AyalaFrancisTanakaLocalStructure}. Such a constructible bundle is a map $f\colon X\to Y$ restricting to a (smooth) fiber bundle on each strata, and on each link. In \cite[Section 6.3]{AyalaFrancisRozenblyumStratifiedHomotopyHypothesis}, the authors prove that constructible bundle over some fixed conically smooth stratified space $Y$ are classified in an $\infty$-categorical sense by some $\infty$-category. This suggests in particular that constructible bundles with fixed fibers for each strata (and link) should be classified by some stratified object in a homotopy category of stratified spaces. One could also compare this notion with the stratified fiber bundles introduced by Baues and Ferrario in \cite{BauesFerrario}.


In this article, following the Kan-Quillen strategy, we propose a model for the homotopy category of stratified spaces : the combinatorial model category of filtered simplicial sets over a poset $P$. Our goal is to provide a categorical context in which the aforementioned problems can be studied. To do so, we work with the presheaf category of simplicial sets stratified over some fixed poset $P$. Since we work over a fixed poset, we use the adjective \textbf{filtered}, instead of stratified. Equipped with the definition of filtered homotopy equivalences - equivalently, the definition of filtered cylinders - we construct a model structure on $\sS_P$ using Cisinski's theory. It is the Cisinski model structure having the smallest possible class of weak equivalences containing filtered homotopy equivalences (see Proposition \ref{AdmissibleHornHomotopyEquivalence}). We then prove \textit{a posteriori} that weak equivalences are detected by some diagrams of groups (see Definition \ref{FilteredHomotopyGroupsSimplicial} and Theorem \ref{WhiteheadSimplicial}). This contrast with the approach in Henriques' unpublished work \cite{Henriques}, where the author starts from the data of a category of diagrams of simplicial sets, and uses it to define a class of weak equivalences on $\sS_P$.
Thanks to this model structure, we were able to prove some topological results on stratified spaces, see Theorem \ref{Whitehead} and Theorem \ref{AlmostMiller}. 

\subsection*{Outline of the paper}
In section \ref{SectionCMFsSetP}, we define the presheaf category of filtered simplicial sets. Then, starting with a notion of filtered homotopy equivalences coming from the usual construction of a filtered cylinder object, we apply Cisinski's work \cite[Théorème 1.3.22]{Cisinski} to produce a cofibrantly generated model category in which cofibrations are monomorphisms (Theorem \ref{ApplyCisinski}).

But Theorem \ref{ApplyCisinski} does not give us a full characterization of the fibrations. In particular, it is not immediate that fibrations are defined "à la Kan" by lifting properties against admissible horn inclusions - as would be expected. In section \ref{SectionCharacterizingFibration}, we prove that this characterization holds for $\sS_P$. We do so by constructing filtered analogues to the subdivision and extensions functors - the filtered subdivision, $\sd_P$ (Definition \ref{FilteredSubdivision}), and its adjoint $\Ex_P$ (Definition \ref{DefinitionEx}) - and proving that they satisify suitable properties. The characterization of the model category $\sS_P$ (Theorem \ref{TheoremCharacterizationsSetP}) is then an application of the more general result, Theorem \ref{TheoAppendixA}, proven in Appendix \ref{AppendixA}

In section \ref{SimplicialStructure}, we show that the model category is simplicial and that we actually have a combinatorial model category, see Theorem \ref{SimplicialModelCategory}. Furthermore, for $X$ a fibrant filtered simplicial set, using the simplicial category structure, we define $s\pi_n(X)$ the filtered homotopy groups of $X$. For $n\geq 0$, $s\pi_n(X)$ is a diagram of groups (or sets, if $n=0$) built from the $n$-th homotopy groups of the filtered pieces of $X$ such as the strata and the links. When $X$ is trivially filtered, we recover the usual homotopy groups of $X$ but in general the structure is much richer (See Remark \ref{RemarqueFilteredHomotopyGroupsSimplicialSets}). In particular, we get that the filtered homotopy groups completely characterize the filtered homotopy equivalences between fibrant filtered simplicial sets. More precisely, we have the following result :

\begin{DuplicateWhiteheadSimplicial}
Let $f\colon X\to Y$ be a filtered map between fibrant filtered simplicial sets. It is a filtered homotopy equivalence if and only if the induced maps $ s\pi_n(f)\colon  s\pi_n(X,\phi)\to  s\pi_n(Y,f\circ\phi)$ and $ s\pi_0(f)\colon  s\pi_0(X)\to  s\pi_0(Y)$ are isomorphisms for all $n\geq 1$ and all pointing $\phi\colon V\to X$.
\end{DuplicateWhiteheadSimplicial}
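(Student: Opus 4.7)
The plan is to imitate Quillen's proof of the simplicial Whitehead theorem, transplanted to the filtered setting. Since cofibrations in $\sSF$ are monomorphisms, every object is cofibrant, so between fibrant objects, a homotopy equivalence is the same thing as a weak equivalence, and the task reduces to characterizing weak equivalences between fibrant filtered simplicial sets. The forward implication should follow from functoriality of $s\pi_n$ in the simplicial model structure from section \ref{SimplicialStructure}, together with the fact that weak equivalences between fibrant objects induce bijections on derived mapping sets into any fixed target; specialized to the filtered representing objects for the $s\pi_n$, this should upgrade to a strict isomorphism of filtered simplicial sets once one observes that the homotopy-group data involved is discrete.

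For the harder converse, I would factor $f = p \circ i$ with $i\colon X \to Z$ a trivial cofibration and $p\colon Z \to Y$ a fibration. Since $Y$ is fibrant and $p$ is a fibration, $Z$ is fibrant; applying the forward direction to $i$ and two-out-of-three to the hypothesis on $s\pi_n(f)$ forces $s\pi_n(p)$ to be an isomorphism for every $n \ge 0$ and every pointing. It therefore suffices to prove: a fibration $p$ between fibrant filtered simplicial sets inducing isomorphisms on all filtered homotopy groups is a trivial fibration, i.e.\ has the right lifting property against every monomorphism in $\sSF$.

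The crux is this lifting statement, which I would attack inductively along the skeletal filtration of the relevant monomorphism, reducing to lifting against boundary inclusions of the representable filtered simplices. At each inductive step the obstruction to extending a partial lift is a class in a relative filtered homotopy group. Using the admissible-horn characterization of fibrations (Proposition \ref{CMFEgale}) together with a long exact sequence adapted to the filtered setting, I would identify this obstruction with a component of $s\pi_n(Z,\phi)\to s\pi_n(Y,p\circ\phi)$ for a pointing $\phi$ read off from the partial lift itself, at which point the isomorphism hypothesis kills the obstruction and yields the required extension.

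The main obstacle is precisely in matching this obstruction theory to the genuinely filtered structure of $s\pi_n$. The appearance of arbitrary pointings $\phi\colon V \to X$ in the statement, rather than the single basepoints of the unfiltered Whitehead theorem, already signals that obstructions carry filtered information invisible to a stratum-by-stratum analysis, so the full isomorphism of filtered simplicial sets $s\pi_n(X,\phi)\to s\pi_n(Y,f\circ\phi)$ must be exploited, not merely its bijectivity on underlying pieces. Pinning down which pointings arise from a given partial lift, and verifying that the isomorphism of filtered simplicial sets really trivializes every such obstruction, is where I expect the technical work to concentrate.
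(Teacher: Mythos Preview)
Your overall shape---identify homotopy equivalence with weak equivalence between fibrant objects, factor $f=p\circ i$ with $i$ a trivial cofibration and $p$ a fibration, then reduce to showing $p$ is a trivial fibration---matches the paper exactly (this is Proposition~\ref{FibrantWeakEquivalence}). Where you diverge is in how to prove that a fibration inducing isomorphisms on all $s\pi_n$ is trivial.

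You propose skeletal obstruction theory: climb the filtration by $\partial\Delta^J\hookrightarrow\Delta^J$, identify the obstruction to extending a partial lift as a class in a relative filtered homotopy group via a long exact sequence ``adapted to the filtered setting,'' and kill it with the hypothesis. The paper never does any of this. Instead it routes everything through the right Quillen functor
\[
D\colon \sS_P \to \Fun(\Delta(P)^{\op},\sS),\qquad D(X)(\Delta^J)=\Map(\Delta^J,X),
\]
into the projective model structure. The hypothesis that all $s\pi_n(f)$ are isomorphisms, unpacked, says precisely that each $\Map(\Delta^J,f)$ induces isomorphisms on all $\pi_n$ for all basepoints---i.e.\ is a weak equivalence of Kan complexes by the \emph{classical} Whitehead theorem---so $D(f)$ is a levelwise weak equivalence. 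The remaining content is Proposition~\ref{DiagramFibration}: $D$ reflects triviality among fibrations. Its proof (via Lemma~\ref{LemmeDiagramme}) uses the simplicial enrichment and a homotopy-limit argument to pass from weak equivalences on $\Map(\Delta^J,-)$ to weak equivalences on $\Map(\partial\Delta^J,-)$; a pullback square then produces the lift directly, with no obstruction classes or long exact sequences in sight.

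Your route is plausible but would require building a filtered long exact sequence and an obstruction calculus that the paper nowhere develops; you correctly flag this as the place ``where the technical work concentrates,'' but you have not done it. The paper's approach buys a clean reduction to the unfiltered Whitehead theorem applied one $\Delta^J$ at a time, and in particular explains why quantifying over \emph{all} pointings is exactly the right hypothesis: a pointing over $V$ restricted to each $\Delta^J\subseteq V$ is just a basepoint of $\Map(\Delta^J,X)$.
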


In section \ref{FilteredTopologicalSpaces}, we continue to adapt the Kan-Quillen strategy, and introduce a pair of adjoint functors, $\Real{-}_P\colon \sS_P\to \Top_P$, the filtered realisation, and $\Sing_P\colon \Top_P\to\sS_P$, the filtered simplicial set of singular simplices. Here $\sS_P$ is the category of simplicial sets filtered over some poset $P$, and $\Top_P$ is the category of topological spaces filtered over $P$. The object of the latter category are in fact stratified spaces, and the adjective filtered reflects the fact that the morphisms of this category preserve the filtrations.

Contrary to the classical case, for a stratified space $A$, the filtered simplicial set $\Sing_P(A)$ is not fibrant in general (see Example \ref{ExampleNonFibrantSpaces}). For this reason, we restrict our attention to spaces such that $\Sing_P(A)$ is fibrant. Proposition \ref{ConStratFibrant} guarantees that such spaces exist and that, in fact, most of the classical examples of stratified spaces (such as topological pseudomanifolds) are of this kind.
In addition, we extend the definition of filtered homotopy groups to filtered spaces (Definition \ref{TopFilteredHomotopyGroup}).
 We are then able to prove a filtered version of Whitehead's theorem :

\begin{DuplicateWhitehead}
Let $f\colon A\to B$ be a filtered map between two fibrant filtered spaces such that $A$ and $B$ are filtered homeomorphic to the realisation of some filtered simplicial sets $X$ and $Y$ respectively. Then $f$ is a filtered homotopy equivalence if and only if the maps of functors $ s\pi_0(f)\colon  s\pi_0(A)\to  s\pi_0(B)$ and $ s\pi_n(f)\colon  s\pi_n(A,\phi)\to s\pi_n(B,f\circ\phi)$ are isomorphisms for all $n\geq 0$ and all pointings of $A$. 
\end{DuplicateWhitehead}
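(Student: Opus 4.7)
The plan is to reduce the statement to the simplicial Whitehead theorem (Theorem~\ref{WhiteheadSimplicial}) by transporting $f$ along the adjunction $\Real{-}_P \dashv \Sing_P$ between $\sS_P$ and $\Top_P$.

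For the forward implication, suppose $f$ is a filtered homotopy equivalence. Since $\Sing_P$ is right adjoint to $\Real{-}_P$ it carries the filtered topological cylinder to a simplicial cylinder and therefore sends filtered homotopies to filtered homotopies, so $\Sing_P(f)\colon \Sing_P(A)\to\Sing_P(B)$ is a homotopy equivalence between fibrant filtered simplicial sets. Theorem~\ref{WhiteheadSimplicial} then forces the induced maps on simplicial filtered homotopy groups to be isomorphisms, which by Definition~\ref{TopFilteredHomotopyGroup} is exactly the assertion for the topological $s\pi_n(f)$.

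For the converse, assume every $s\pi_n(f)$ is an isomorphism on every pointing. By the same definitional compatibility through $\Sing_P$, the induced maps $s\pi_n(\Sing_P(f))$ are isomorphisms of filtered simplicial sets, and Theorem~\ref{WhiteheadSimplicial} applies to give that $\Sing_P(f)$ is a homotopy equivalence in $\sS_P$. Applying the left adjoint $\Real{-}_P$, which preserves homotopy equivalences since it commutes with the filtered cylinder, yields a filtered homotopy equivalence $\Real{\Sing_P(f)}_P\colon \Real{\Sing_P(A)}_P\to\Real{\Sing_P(B)}_P$.

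To descend back to $f$ itself, one inspects the naturality square of the counit $\epsilon$, whose verticals are $\epsilon_A\colon \Real{\Sing_P(A)}_P\to A$ and $\epsilon_B\colon \Real{\Sing_P(B)}_P\to B$, whose top edge is $\Real{\Sing_P(f)}_P$ and whose bottom edge is $f$. By the two-out-of-three property for filtered homotopy equivalences it suffices to show that $\epsilon_A$ and $\epsilon_B$ are filtered homotopy equivalences. Here the hypothesis $A\cong\Real{X}_P$, $B\cong\Real{Y}_P$ enters via the triangle identity $\epsilon_{\Real{X}_P}\circ\Real{\eta_X}_P=\Id$: it reduces the problem to showing that the unit $\eta_X\colon X\to \Sing_P(\Real{X}_P)$ is a weak equivalence in $\sS_P$ whose realisation is a filtered homotopy equivalence. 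This last step is the main obstacle: it is the stratified counterpart of the classical fact that the unit of the usual $|\cdot|\dashv\Sing$ adjunction is a weak equivalence, and is what allows the simplicial Whitehead theorem to have topological consequences. Assuming this fact is established in Section~\ref{FilteredTopologicalSpaces} using the fibrancy of $\Sing_P(\Real{X}_P)$ and the characterisation of fibrations by admissible horn extensions, the two-out-of-three argument closes the proof.
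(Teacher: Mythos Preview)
Your overall strategy---reduce to Theorem~\ref{WhiteheadSimplicial} via the adjunction $\Real{-}_P\dashv\Sing_P$---is the same as the paper's, and the forward implication is fine. The gap is in the converse: you reduce everything to the assertion that the counit $\epsilon_A\colon\Real{\Sing_P(A)}_P\to A$ (equivalently, that the realisation of the unit $\Real{\eta_X}_P$) is a filtered homotopy equivalence, and then explicitly write ``assuming this fact is established in Section~\ref{FilteredTopologicalSpaces}''. It is not. Nowhere in the paper is it shown that the unit $\eta_X\colon X\to\Sing_P(\Real{X}_P)$ is a weak equivalence, and no model structure on $\Top_P$ is constructed that would make such a statement automatic. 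This is the filtered analogue of a substantial classical theorem, and you cannot simply assume it.

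The paper sidesteps this entirely. Rather than showing $\epsilon_A$ and $\epsilon_B$ are homotopy equivalences and invoking two-out-of-three, it \emph{constructs} a candidate inverse directly: take the simplicial homotopy inverse $\widetilde g$ of $\Sing_P(f)$ and set $g=\epsilon_A\circ\Real{\widetilde g}_P\circ\Real{\eta_Y}_P$. Then $f\circ g=\epsilon_B\circ\Real{\Sing_P(f)\circ\widetilde g}_P\circ\Real{\eta_Y}_P$ by naturality of $\epsilon$, and the simplicial homotopy $\Sing_P(f)\circ\widetilde g\sim\Id$ transports to a filtered homotopy $f\circ g\sim \epsilon_B\circ\Real{\eta_Y}_P=\Id_B$, where the last equality is the \emph{triangle identity}---an honest equality, not a homotopy equivalence. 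This gives a one-sided inverse; a bootstrap (run the same argument on $g$, then use the standard $gf\sim gfgh\sim gh\sim\Id$ trick) upgrades it to a two-sided one. The point is that only the triangle identity is needed, never the stronger claim that $\epsilon$ or $\eta$ is itself an equivalence.
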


In section \ref{TowardATheoremOfMiller}, we show that the hypothesis of Theorem \ref{Whitehead}, can be strengthened to have a simpler conclusion. If we restrict to the context of PL conically stratified spaces, see \cite[Definition A.5.5]{HigherAlgebra}, we show that it is enough to understand morphisms between filtered spaces only at the level of strata and holinks to detect a homotopy equivalence. The conclusion is identical to the one in \cite[Theorem 6.3]{Miller}, but the class of objects for which the theorems apply are different, although they both include most notions of pseudo-manifolds. It is to be noted that the proofs are completely independent.
\begin{DuplicateAlmostMiller}
Let $f\colon A \to B$ be a filtered map between conically stratified spaces, such that $A$ and $B$ are isomorphic to the filtered realisations of two filtered simplicial sets, $X$ and $Y$ respectively. Then $f$ is a filtered homotopy equivalence if and only if it induces weak equivalences between corresponding strata and holinks.
\end{DuplicateAlmostMiller}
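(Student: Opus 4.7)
The plan is to reduce the statement to Theorem \ref{Whitehead}, which already characterizes homotopy equivalences between filtered realisations in terms of isomorphisms of the filtered homotopy groups $s\pi_n$. The main task is to show, under the conical hypothesis, that the entire filtered simplicial set $s\pi_n(A,\phi)$ is controlled by the ordinary homotopy groups of the strata of $A$ and of its holinks, and consequently that a filtered map inducing weak equivalences on these pieces already induces isomorphisms on $s\pi_n$.

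First I would handle the easy direction. A stratified homotopy equivalence $f\colon A\to B$ restricts to honest homotopy equivalences on each stratum (the stratum indexed by $p$ is $f^{-1}$-preserved up to filtered homotopy), and by functoriality of $\Hol$ under filtered maps it induces homotopy equivalences between corresponding holinks. This step does not use the conical hypothesis.

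For the non-trivial direction, I would analyze $s\pi_n(A,\phi)$ chain by chain in $P$. By the construction of Definition \ref{TopFilteredHomotopyGroup}, its value over a chain $p_0<\dots<p_k$ is a homotopy group of a filtered mapping space $\Map_P(S^n_{\mathrm{filt}},A)$ relative to $\phi$, where $S^n_{\mathrm{filt}}$ carries a specific filtration adapted to the chain. I would then use the conical structure of $A$, which near a point $x$ of a stratum $A_{p_0}$ provides a neighborhood of the form $U\times C(L)$ with $L$ a compact filtered link, to produce a deformation of any such filtered $n$-sphere towards a small neighborhood of $\phi$. For $k=0$ the deformation collapses the sphere into the stratum $A_{p_0}$, identifying the relevant piece of $s\pi_n$ with $\pi_n(A_{p_0})$; for $k\geq 1$, pushing along the cone coordinate identifies the piece with $\pi_{n-k}$ (up to index shift) of an iterated holink between the strata $A_{p_0},\dots,A_{p_k}$. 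The conical local-triviality is what makes these deformations compatible across chains and compatible with the filtered simplicial structure of $s\pi_n$.

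Once this dictionary is established, the hypothesis that $f$ induces weak equivalences between strata and holinks translates directly into the statement that $s\pi_n(f)\colon s\pi_n(A,\phi)\to s\pi_n(B,f\circ\phi)$ is an isomorphism of filtered simplicial sets for every $n$ and every pointing, and Theorem \ref{Whitehead} then gives the conclusion. The main obstacle I anticipate is the identification of the higher-dimensional pieces of $s\pi_n$ with holink homotopy groups: one must choose a model of $\Hol$ compatible with the filtered mapping spaces used to define $s\pi_n$, and implement the cone-retraction argument coherently for iterated chains in $P$, without destroying the filtered simplicial structure that records how these pieces glue together.
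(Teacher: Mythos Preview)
Your easy direction is fine, and your plan to reduce to Theorem~\ref{Whitehead} is exactly right. But the hard direction has a real gap, and also a misconception about what the pieces of $s\pi_n$ are.

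First, the misconception: by Definition~\ref{TopFilteredHomotopyGroup}, the value of $s\pi_n(A,\phi)$ at a non-degenerate chain $\Delta^{J_0}=[p_0,\dots,p_k]$ is simply $\pi_n\bigl(\Map(\RealP{\Delta^{J_0}},A),\phi\bigr)$, i.e.\ the $n$-th homotopy group of the \emph{generalized holink} $\Hol_{J_0}(A)$. There is no filtered sphere and no index shift; your formula ``$\pi_{n-k}$ of an iterated holink'' is not what these objects compute.

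Second, and more seriously: the hypothesis only gives you control over the strata $\Map(\RealP{[p]},A)$ and the ordinary holinks $\Map(\RealP{[p,q]},A)$, i.e.\ over the pieces of $s\pi_n$ indexed by chains of length $0$ and $1$. To apply Theorem~\ref{Whitehead} you must control the pieces indexed by \emph{all} chains, in particular chains of length $\geq 2$. Your cone-retraction argument at a single point of $A_{p_0}$ does not by itself tell you that $\Map(\RealP{[p_0,\dots,p_k]},A)$ is determined by the pairwise data, and this is the heart of the matter. You identify this as ``the main obstacle'' in your last paragraph but offer no mechanism to resolve it.

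The paper handles this step via a categorical argument rather than a geometric one. The point is that the conical hypothesis, through Lurie's result \cite[Theorem A.6.4]{HigherAlgebra}, implies that $U(\Sing_P(A))$ is a quasi-category. One then shows (Lemma~\ref{JoyalPath}) that for any such filtered simplicial set the inclusion of the spine $\OSk(\Delta^{J_0})\hookrightarrow\Delta^{J_0}$ induces a weak equivalence
\[
\Map(\Delta^{J_0},\Sing_P(A))\xrightarrow{\ \sim\ }\Map(\OSk(\Delta^{J_0}),\Sing_P(A)),
\]
because spine inclusions are inner anodyne in the Joyal model structure. Since the outer skeleton is built from just the $[p_i]$ and $[p_i,p_{i+1}]$, the right-hand side is a homotopy limit of exactly the strata and pairwise-holink mapping spaces. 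Hence if $f$ induces weak equivalences on those, it induces weak equivalences on all $\Map(\Delta^{J_0},-)$, and then on all $\Map(\Delta^J,-)$ by Lemma~\ref{WEonNonDegenerate}; Theorem~\ref{Whitehead} finishes. This reduction from long chains to the spine is the missing idea in your proposal.
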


This theorem applies in particular to filtered continuous maps between PL-stratified spaces, which covers many geometric examples such as algebraic varieties.

In section \ref{ApplicationsAndExamples}, we consider the invariants of the homotopy type of stratified spaces which can be extracted from the constructions of this article. Then, we propose a geometric construction which produces fibrant stratified spaces and we use it to produce examples of computations of filtered homotopy groups and of applications of Theorems \ref{Whitehead} and \ref{AlmostMiller}. We start with two explicit geometric examples, subsections \ref{MoebiusCylindre} and \ref{PseudoManifoldExample}, then move on to a family of examples inspired by filtered classifying spaces of diagrams of groups.

Stephen Nand-Lal and Jon Woolf have independently worked on a similar approach to the homotopy theory of stratified spaces, but where the poset determining the filtration is not fixed. In their project the combinatorial model is provided by the category of simplicial sets equipped with the Joyal model structure, see \cite{NandLal}. It would be interesting to understand how this relates to the model structure on slice categories of simplicial sets used in this paper.

Independantly, Peter Haine constructed another model structure on the category $\sS_P$ as a localization of the joyal model structure \cite{Haine}. In his model structure, the cofibrations are the monomorphisms, and filtered homotopy equivalences are included in the class of weak equivalences. In particular, it is a localization of the one we present here.
\section*{Acknowledgements}
The author wishes to thank David Chataur for his careful reading of the manuscript which led to many improvements, and for the discussions on several proofs of this article. The author also wishes to thank Jon Woolf and Stephen Nand-Lal for the fruitful conversations on the subjects of stratified homotopy and pointings. The author is also thankful for the exchanges with Dimitri Ara, Serge Bouc, Greg Friedman, Paul Goerss and Sean Moss during the preparation of this article.

\section{The model category of filtered simplicial sets}\label{FilteredSimplicialSets}
\label{SectionCMFsSetP}

In this section, we define the category of filtered simplicial sets, and use \cite[Théorème 1.3.22]{Cisinski} to prove that there exists a cofibrantly generated model structure on it.

\begin{defin}
We define the category of filtered simplicial sets over the poset $P$, $\sS_P$ to be the category of simplicial sets over $N(P)$. Objects of $\sS_P$ are pairs $(X,\pi)$ where $X$ is a simplicial set, and $\pi\colon X\to N(P)$ is a morphism of simplicial sets, and will be called a filtration on $X$. A morphism in $\sS_P$ from $(X,\pi)$ to $(Y,\nu)$ is a morphism of simplicial sets $f\colon X\to Y$ such that $\nu\circ f= \pi$. We will call such morphism "filtered" and often write $X$ instead of $(X,\pi)$.
\end{defin}

\begin{defin}
Let $\Delta(P)$ be the full subcategory of $\sS_P$ whose objects are of the form 
\begin{equation*}
\Delta^N\xrightarrow{\pi} N(P)
\end{equation*}
for $N\geq 0$.
Let $J=(j_p)_{p\in P}$ be a sequence of integers greater or equal than $-1$ with all values equal to $-1$ outside of a strictly increasing chain $p_0<\dots<p_d\in P$. Then we write $\Delta^J$ to designate the object of $\Delta(P)$ such that $\pi(\Delta^J)=[p_0,\dots,p_0,p_1,\dots,p_d]$, where $p_i$ appears $j_{p_i}+1$ times.

We call the $\Delta^J$s filtered simplices, and we will identify them with their image in $N(P)$. In particular, we will write $\Delta^J=[p_0,\dots,p_0,p_1,\dots,p_d]$. When we need to refer to the non-filtered simplex underlying the definition of $\Delta^J$, we will write $N(J)=\sum_{p\in P}(j_p+1) -1$ (or just $N$ when no confusion is possible), and then 
\begin{equation*}
\Delta^J= \Delta^{N(J)}\to N(P).
\end{equation*}
When usefull, we will also write $\Delta^{J_0}$ for the corresponding non-degenerate simplex in $N(P)$. We will have $\Delta^{J_0}=[p_0,p_1,\dots,p_d]$ where each $p_i$ appears exactly once.
\end{defin}

\begin{prop}\label{PresheavesCategory}
The category of filtered simplicial sets over $P$ is isomorphic to the category of presheaves over $\Delta(P)$, i.e. $\Fun(\Delta(P)^{\text{op}},\Set)$.
\end{prop}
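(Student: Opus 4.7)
The plan is to recognize Proposition \ref{PresheavesCategory} as an instance of the classical fact that a slice category of presheaves is again a presheaf category, taken with respect to the category of elements. I would split the argument into three steps.

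First, I would recall the general principle: for any small category $\mathcal{C}$ and any presheaf $F\in\Fun(\mathcal{C}^{\op},\Set)$, the slice category $\Fun(\mathcal{C}^{\op},\Set)/F$ is canonically isomorphic to $\Fun((\int F)^{\op},\Set)$, where $\int F$ denotes the category of elements of $F$ (objects are pairs $(c,x)$ with $c\in\mathcal{C}$ and $x\in F(c)$, morphisms $(c,x)\to(c',x')$ are maps $\alpha\colon c\to c'$ in $\mathcal{C}$ with $F(\alpha)(x')=x$). One direction of the equivalence sends a presheaf $G$ on $\int F$ to the simplicial set $\coprod_{c,x} G(c,x)$ with the obvious map to $F$; the other direction sends $(X\to F)$ to the presheaf $(c,x)\mapsto\{s\in X(c)\mid X(\text{id})(s)=x \text{ in fibre over }x\}$, i.e.\ the fibre of $X(c)\to F(c)$ over $x$.

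Second, I would apply this with $\mathcal{C}=\Delta$ and $F=N(P)$, so that by definition $\sS_P=\sS/N(P)=\Fun(\Delta^{\op},\Set)/N(P)\cong\Fun((\int N(P))^{\op},\Set)$. The heart of the proof is then identifying $\int N(P)$ with $\Delta(P)$. An object of $\int N(P)$ is a pair consisting of $[n]\in\Delta$ and an $n$-simplex of $N(P)$, i.e.\ a monotone map $[n]\to P$; writing its image as a weakly increasing chain $p_0\leq p_1\leq\cdots\leq p_n$ and grouping repeated values, this is exactly a filtered simplex $\Delta^J=[p_0,\dots,p_0,p_1,\dots,p_d]$ with $p_i$ appearing $j_{p_i}+1$ times. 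A morphism $(\sigma\colon [n]\to P)\to(\sigma'\colon [n']\to P)$ in $\int N(P)$ is a simplicial map $\alpha\colon[n]\to[n']$ with $\sigma'\circ\alpha=\sigma$, which is precisely a morphism $\Delta^J\to\Delta^{J'}$ in $\sS_P$. Hence $\int N(P)\cong\Delta(P)$ as categories.

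Third, I would combine the two steps: $\sS_P\cong\Fun((\int N(P))^{\op},\Set)\cong\Fun(\Delta(P)^{\op},\Set)$. Under this identification, a filtered simplicial set $(X,\pi)$ corresponds to the presheaf $\Delta^J\mapsto\Hom_{\sS_P}(\Delta^J,X)$, and naturality in $\Delta(P)$ is automatic from the slice structure.

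The only real obstacle is bookkeeping: making sure the combinatorial description of $\Delta(P)$ via sequences $J=(j_p)_{p\in P}$ matches the more canonical description of $\int N(P)$ via monotone maps $[n]\to P$. Once one checks on objects that the non-degenerate chain $\Delta^{J_0}=[p_0,\dots,p_d]$ corresponds to a strictly increasing map and the total number of vertices $N(J)+1=\sum_p (j_p+1)$ is the dimension, morphisms match by inspection because both are simplicial maps over $N(P)$. No model-categorical input is needed; this is purely a Yoneda/category-of-elements identification.
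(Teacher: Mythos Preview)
Your proposal is correct and follows essentially the same route as the paper: both invoke the general fact that for any presheaf $S$ on a small category $\mathcal{G}$, the slice $\Fun(\mathcal{G}^{\op},\Set)/S$ is isomorphic to presheaves on the category of elements of $S$ (equivalently, on the full subcategory of representables over $S$). The paper states this principle directly and observes that $\Delta(P)$ is by definition that full subcategory, while you spell out the identification $\int N(P)\cong\Delta(P)$ more explicitly; the arguments are the same in substance.
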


\begin{proof}
It is true for any category of presheaves $\mathcal{C}$, and any object $S$ of $\mathcal{C}$,
that $\mathcal{C}_{\downarrow{} S}$ is a category of presheaves of this form. 
To see this, let $\G$ be a small category, $\mathcal{C}$ the category of presheaves over $\G$, and $S$ an object of $\mathcal{C}$.
Define $\G(S)$ to be the full subcategory of $\mathcal{C}_{\downarrow{} S}$ whose objects are of the form $g\to S$, where $g\in \G$ is identified with $\Hom(-,g)\in \mathcal{C}$. Then, the functor 
\begin{align*}
\mathcal{C}_{\downarrow{} S} & \to \Fun(\G(S)^{\text{op}},\Set)\\
(X\to S) &\mapsto \left\{\begin{array}{rcl}
\G(S)^{\text{op}}&\to& \Set\\
(g\to S)&\mapsto& \Hom_{\mathcal{C}_{\downarrow{} S}}(g\to S,X\to S)
\end{array}\right.
\end{align*}
is an equivalence of categories.
\end{proof}

\begin{defin}
Let $X$ be a filtered simplicial set, we write $X_J=X(\Delta^J)$.
The elements of $X_J$ will be called $J$-simplices of $X$, and we will often identify those elements with the corresponding maps $\Delta^J\to X$.
\end{defin}
As a consequence of this definition and of Proposition \ref{PresheavesCategory}, we have that for any filtered simplicial set, $X$,
\begin{equation*}
X\simeq \colim_{\Delta^J\to X}\Delta^J.
\end{equation*}

\begin{prop}\label{ForgetFree}
We have a pair of adjoint functors $(\Er,\Pr)$
\begin{eqnarray*}
\Er\colon & \sS_P&\to \sS\colon \Pr\\
&(X\to N(P))&\mapsto X\\
&(K\times N(P)\to N(P))&\mapsfrom K
\end{eqnarray*}
\end{prop}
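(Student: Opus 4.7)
The plan is to check the adjunction directly via the universal property of the product in $\sS$, which is the standard reason why a slice category forgetful functor is left adjoint to "product with the base" whenever the base category has products.

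Fix a filtered simplicial set $(X,\pi)\in \sS_P$ and a simplicial set $K\in \sS$. I would first unpack the set $\Hom_{\sS_P}((X,\pi), F(K))$: by definition a morphism $g\colon (X,\pi)\to (K\times N(P),\pr_2)$ in $\sS_P$ is a map of simplicial sets $g\colon X\to K\times N(P)$ satisfying $\pr_2\circ g=\pi$. By the universal property of the product $K\times N(P)$ in $\sS$, such a $g$ corresponds to a pair $(g_1,g_2)$ with $g_1\colon X\to K$ and $g_2\colon X\to N(P)$, and the constraint forces $g_2=\pi$. Hence $g$ is uniquely determined by $g_1\in \Hom_{\sS}(X,K)=\Hom_{\sS}(U(X,\pi),K)$.

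Conversely, given $f\in \Hom_{\sS}(U(X,\pi),K)$, the map $(f,\pi)\colon X\to K\times N(P)$ is an element of $\Hom_{\sS_P}((X,\pi),F(K))$, and these assignments are mutually inverse. This yields the desired bijection
\begin{equation*}
\Phi_{X,K}\colon \Hom_{\sS}(U(X,\pi),K) \xrightarrow{\ \sim\ } \Hom_{\sS_P}((X,\pi),F(K)).
\end{equation*}

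The remaining step is naturality. In $K$, given $h\colon K\to K'$, the induced map $F(h)=h\times \id_{N(P)}$ is indeed a morphism in $\sS_P$ because it commutes with $\pr_2$, so $F$ is functorial; post-composition with $h$ on either side of $\Phi$ agrees by inspection. In $(X,\pi)$, a morphism $\varphi\colon (X',\pi')\to (X,\pi)$ in $\sS_P$ satisfies $\pi\circ\varphi=\pi'$, which is exactly what is needed for $\Phi$ to commute with pre-composition. No step is genuinely hard here; the only thing to be careful about is to notice that the condition $\pr_2\circ g=\pi$ on the right-hand side removes the apparent extra freedom coming from the $N(P)$-factor, which is what makes the bijection work.
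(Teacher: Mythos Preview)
Your proof is correct and complete; the paper itself states this proposition without proof, presumably because it is the standard elementary fact that for any category $\mathcal{C}$ with binary products and any object $S$, the forgetful functor $\mathcal{C}_{\downarrow S}\to\mathcal{C}$ is left adjoint to $(-)\times S$. Your direct verification via the universal property of the product is exactly the routine argument one would give if pressed.
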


\begin{defin}\label{InnerProductFilteredSimplicialSets}
Let $X\xrightarrow{\pi_X} N(P)$ and $Y\xrightarrow{\pi_Y} N(P)$ be two filtered simplicial sets. We define the filtered product of $X$ and $Y$ as  the following pullback 
\begin{equation*}
\begin{tikzcd}
X\times_{N(P)} Y
\arrow{d}{\pr_X}
\arrow{r}{\pr_Y}
& Y
\arrow{d}{\pi_Y}
\\
X
\arrow{r}{\pi_X}
& N(P)
\end{tikzcd}
\end{equation*}
where the filtration $X\times_{N(P)} Y\to N(P)$ is given by the composition $\pi_X\circ \pr_X=\pi_Y\circ \pr_Y$. Since we will never consider in this article non-filtered product of filtered simplicial sets, we will drop the $N(P)$ from the notation.
This extends to a bi functor
\begin{align*}
-\times - \colon \sS_P\times \sS_P&\to \sS_P\\
(X,Y)&\mapsto X\times Y
\end{align*}
\end{defin}

\begin{defin}\label{TensorProduct}
We define the bi functor 
\begin{align*}
-\otimes-\colon \sS_P\times \sS &\to \sS_P\\
(X,K)&\mapsto X\times \Pr(K)
\end{align*}
where $\times$ is the filtered product of Definition \ref{InnerProductFilteredSimplicialSets}. 
\end{defin}

We now have a functorial cylinder object : For any $X$, $X\otimes \Delta^1$ is a cylinder object of $X$. This naturally leads to the following definition of a filtered homotopy.

\begin{defin}\label{DefHomotopy}
Let $f,g\colon X\to Y$ be two morphisms of filtered simplicial sets. We say that $f$ and $g$ are elementarily filtered homotopic to each other if there exists a filtered map $H\colon X\otimes \Delta^1\to Y$ making the following diagram commutative :

\begin{equation*}
\begin{tikzcd}
X
\arrow[swap]{dr}{\Id\otimes\iota_0}
\arrow[bend left=12]{drr}{f}
&\phantom{X} 
&\phantom{X}
\\
\phantom{X}
& X\otimes \Delta^1
\arrow{r}{H}
& Y
\\
X
\arrow{ur}{\Id\otimes \iota_1}
\arrow[swap, bend right=12]{urr}{g}
&\phantom{X}
&\phantom{X}
\end{tikzcd}
\end{equation*}
Closing this relation by symmetry and transitivity, we get the definition of filtered homotopy between morphisms.
We write $[X,Y]$ for the set of morphisms from $X$ to $Y$ quotiented by this equivalence relation.

If $f\colon X\to Y$ and $g\colon Y\to X$ are two morphisms of filtered simplicial sets such that $g\circ f$ is filtered homotopic to $\Id_X$ and $f\circ g$ is filtered homotopic to $\Id_Y$, we say that $(f,g)$ is a filtered homotopy equivalence (sometimes we will just say that $f$ is a filtered homotopy equivalence), and that $X$ is filtered homotopy equivalent to $Y$. We will say that $X$ and $Y$ are filtered elementarily homotopy equivalent if the homotopies can be chosen to be elementary. When no confusion is possible, we will omit the adjective filtered.
\end{defin}

\begin{defin}
Let $\Delta^J=\Delta^{N}\to N(P)$ be a filtered simplex and $0\leq k\leq N$. We define the $k$-th horn of $\Delta^J$ as $\Lambda^J_k=\Lambda^{N}_k\to N(P)$, where $\Lambda^{N}_k$ is the $k$-th horn of $\Delta^{N}$, and the filtration is given by the composition $\Lambda^N_k\hookrightarrow\Delta^N\to N(P)$. If $\Delta^J=[q_0,\dots,q_N]$, with $q_0\leq q_1\leq \dots\leq q_n\in P$, we will write $\Lambda^J_k=[q_0,\dots,\widetilde{q}_k,\dots,q_N]$.
\end{defin}


\begin{defin}\label{AdmissibleHorn}
A horn inclusion $\Lambda^J_{k}\hookrightarrow \Delta^J$ is said to be admissible 
If $\Delta^J=s_md_k\Delta^J$ for either $m=k-1$ or $m=k$. i.e. precisely when $\Delta^J$, seen as a simplex of $N(P)$, is degenerate with its $k$th vertex repeated.
\end{defin}

\begin{exemple}
To get a better idea of what admissible horn inclusions are, let us look at a few examples. Consider the poset $P=\{p_0<p_1<p_2\}$, we consider the four following horn inclusions 
\begin{align*}
[p_0,p_1,\widetilde{p_2}]\hookrightarrow [p_0,p_1,p_2]\\
[\widetilde{p_0},p_1,p_1]\hookrightarrow [p_0,p_1,p_1]\\
[p_0,p_1,\widetilde{p_1}]\hookrightarrow [p_0,p_1,p_1]\\
[p_0,\widetilde{p_0},p_1]\hookrightarrow [p_0,p_0,p_1]\\
\end{align*}
The first two horn inclusions are not admissible. Indeed, in the first one, there is only $1$ point of color $p_2$, and so the horn is not admissible. In the second example, there is only $1$ point of color $p_0$, so the horn is not admissible either. In the third and fourth example, however, there are two points of color $p_1$ (resp. $p_0$) and so the horns are admissible.
To better understand what differentiates those horns, we drew a picture of all four. See Figure \ref{FigureHorns}. 
\end{exemple}

\begin{figure}[h]
\begin{tikzpicture}
\draw[black](-1,-1) -- (0,1);
\draw[black](0,1) -- (1,-1);
\filldraw[green](0,1) circle (2pt);
\filldraw[blue] (1,-1) circle (2pt);
\filldraw[red] (-1,-1) circle (2pt);

\draw[black,cm ={1,0,0,1,(4cm,0cm)}](-1,-1) -- (1,-1);
\draw[black,cm ={1,0,0,1,(4cm,0cm)}](-1,-1) -- (0,1);
\filldraw[blue,cm ={1,0,0,1,(4cm,0cm)}](0,1) circle (2pt);
\filldraw[blue,cm ={1,0,0,1,(4cm,0cm)}] (1,-1) circle (2pt);
\filldraw[red,cm ={1,0,0,1,(4cm,0cm)}] (-1,-1) circle (2pt);

\draw[black,cm ={1,0,0,1,(8cm,0cm)}](-1,-1) -- (0,1);
\draw[black,cm ={1,0,0,1,(8cm,0cm)}](0,1) -- (1,-1);
\filldraw[blue,cm ={1,0,0,1,(8cm,0cm)}](0,1) circle (2pt);
\filldraw[blue,cm ={1,0,0,1,(8cm,0cm)}] (1,-1) circle (2pt);
\filldraw[red,cm ={1,0,0,1,(8cm,0cm)}] (-1,-1) circle (2pt);

\draw[black,cm ={1,0,0,1,(12cm,0cm)}](-1,-1) -- (1,-1);
\draw[black,cm ={1,0,0,1,(12cm,0cm)}](0,1) -- (1,-1);
\filldraw[blue,cm ={1,0,0,1,(12cm,0cm)}](0,1) circle (2pt);
\filldraw[red,cm ={1,0,0,1,(12cm,0cm)}] (1,-1) circle (2pt);
\filldraw[red,cm ={1,0,0,1,(12cm,0cm)}] (-1,-1) circle (2pt);

\end{tikzpicture}
\caption{From left to right, $[p_0,p_1,\widetilde{p_2}]$, $[\widetilde{p_0},p_1,p_1]$, $[p_0,p_1,\widetilde{p_1}]$ and $[p_0,\widetilde{p_0},p_1]$}
\label{FigureHorns}
\end{figure}

\begin{remarque}
Taking a closer look at Figure \ref{FigureHorns}, and going back to Definitions \ref{AdmissibleHorn} and \ref{DefHomotopy}, one notices that admissible horns inclusions are precisely the horn inclusions that are homotopy equivalences. The hypothesis that the $k$th vertex is repeated guarantees that there is some proper face of the simplex $\Delta^J$ which is homotopy equivalent to the whole simplex (notice that in this case, there must be at least two). Furthermore, it imposes that the horn still contains such a face and can still be contracted onto it. In the first example of Figure \ref{FigureHorns}, there is no such face. And in the second, such faces exist but the horn is no longer homotopy equivalent to those. In the third, the horn can still be contracted onto the $1$-simplex with one red vertex and one blue vertex, see Figure \ref{FigureAdmissibleHornHomotopy}. This intuition should serve as a motivation for this choice of admissible horn inclusion and Proposition \ref{AdmissibleHornHomotopyEquivalence} gives a precise statement of this fact. The fact that the classical proofs on anodyne extensions translate directly for admissible horn inclusions should be another reason to be convinced that this choice is relevant. See Proposition \ref{AdmissibleHornAnodyneExt}.
\end{remarque}

\begin{figure}
\begin{tikzpicture}
\filldraw[gray] (0,1)--(1,-1)--(-1,-1)--(0,1);
\draw[black](0,1)--(1,-1)--(-1,-1)--(0,1);
\filldraw[blue](0,1) circle (2pt);
\filldraw[blue] (1,-1) circle (2pt);
\filldraw[red] (-1,-1) circle (2pt);
\draw[->,thick](0.9,-0.8)--(0.1,0.8);
\draw[->,thick,cm ={1,0,0,1,(-0.6cm,0cm)}](0.9,-0.8)--(0.4,0.2);
\draw[->,thick,cm ={1,0,0,1,(-1.2cm,0cm)}](0.9,-0.8)--(0.7,-0.4);

\draw[->,thick] (1.5,0)-- (3.5,0);

\draw[black,cm ={1,0,0,1,(5cm,0cm)}](-1,-1) -- (0,1);
\filldraw[blue,cm ={1,0,0,1,(5cm,0cm)}](0,1) circle (2pt);
\filldraw[red,cm ={1,0,0,1,(5cm,0cm)}] (-1,-1) circle (2pt);

\draw[->,thick,cm ={1,0,0,1,(10cm,0cm)}](-1.5,0)--(-3.5,0);

\draw[black,cm ={1,0,0,1,(10cm,0cm)}](-1,-1) -- (0,1);
\draw[black,cm ={1,0,0,1,(10cm,0cm)}](0,1) -- (1,-1);
\filldraw[blue,cm ={1,0,0,1,(10cm,0cm)}](0,1) circle (2pt);
\filldraw[blue,cm ={1,0,0,1,(10cm,0cm)}] (1,-1) circle (2pt);
\filldraw[red,cm ={1,0,0,1,(10cm,0cm)}] (-1,-1) circle (2pt);
\draw[->,thick,cm ={1,0,0,1,(10cm,0cm)}](0.9,-0.8)--(0.1,0.8);
\end{tikzpicture}
\caption{The filtered homotopy equivalence between a simplex and one of its face and its restriction to an admissible horn}
\label{FigureAdmissibleHornHomotopy}
\end{figure}
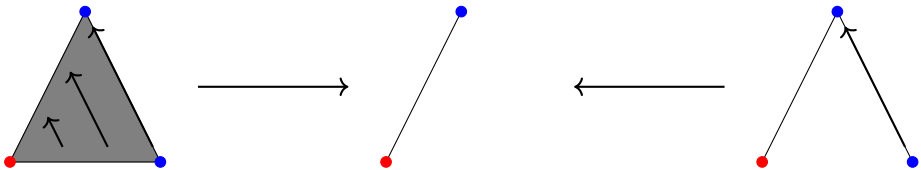

\begin{prop}\label{AdmissibleHornHomotopyEquivalence}
Let $\Lambda^J_k\to \Delta^J$ be any horn inclusion. It is an admissible horn inclusion if and only if it is an elementary filtered homotopy equivalence.
\end{prop}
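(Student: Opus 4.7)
The plan is to prove the two directions of the biconditional separately. The forward direction (admissible $\Rightarrow$ homotopy equivalence) admits an explicit geometric construction, while the converse is an obstruction argument.

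For the forward direction I assume without loss of generality that $q_{k-1}=q_k$ (the other case $q_k=q_{k+1}$ being symmetric). I define the simplicial endomorphism $\phi=s_{k-1}\circ d_{k-1}\colon \Delta^N\to\Delta^N$, which sends the vertex $v_{k-1}$ to $v_k$ and fixes every other vertex. The admissibility hypothesis $q_{k-1}=q_k$ is exactly what makes $\phi$ preserve the filtration $\pi\colon\Delta^J\to N(P)$, and the image of $\phi$ is contained in $d_{k-1}\Delta^J\subseteq\Lambda^J_k$ since $k-1\neq k$. This yields a filtered map $r\colon\Delta^J\to\Lambda^J_k$ with $i\circ r=\phi$. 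I then build the elementary filtered homotopy $H\colon\Delta^J\otimes\Delta^1\to\Delta^J$ between $\Id_{\Delta^J}$ and $i\circ r$ using the standard prism decomposition of $\Delta^N\times\Delta^1$ into $(N+1)$ many $(N+1)$-simplices, setting $H(v_j,0)=v_j$ and $H(v_j,1)=\phi(v_j)$. Since $\phi(v_j)\geq v_j$ in the vertex order, each prism piece has an order-preserving (hence degenerate) image in $\Delta^J$; filtration preservation again follows from $q_{k-1}=q_k$.

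The main additional verification for the forward direction is that the restriction of $H$ to $\Lambda^J_k\otimes\Delta^1$ still lands in $\Lambda^J_k$, which produces the second homotopy $r\circ i\simeq\Id_{\Lambda^J_k}$. For each nondegenerate face $d_j\Delta^J\subseteq\Lambda^J_k$ (with $j\neq k$), I check that the underlying nondegenerate simplex of each prism image avoids the forbidden vertex set $\{0,\ldots,N\}\setminus\{k\}$. A direct case analysis (splitting on whether the prism index lies below, equals, or lies above $k-1$) shows that the underlying face is always a face of $d_j\Delta^J$ or of $d_{k-1}\Delta^J$, both of which lie in $\Lambda^J_k$.

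For the converse, I argue by contrapositive: assume the $k$th vertex of $\Delta^J$ is not repeated, and suppose for contradiction that some filtered $r\colon\Delta^J\to\Lambda^J_k$ is a homotopy inverse to $i$. First, since $v_k$ is then the only vertex of color $q_k$, the filtration forces $r(v_k)=v_k$. Evaluating $r$ on the top simplex $\sigma$ of $\Delta^J$ yields a degenerate $N$-simplex of $\Lambda^J_k$, so the vertex sequence $[r(v_0),\ldots,r(v_N)]$ must have an adjacent coincidence $r(v_m)=r(v_{m+1})$ at some position $m\notin\{k-1,k\}$; this in turn forces $r$ to move some vertex $v_\ell$ (with $\ell\neq k$) to a distinct vertex of the same colour. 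The obstruction then comes from examining the restriction of the homotopy $r\circ i\simeq\Id_{\Lambda^J_k}$ to a critical simplex $\tau\subseteq\Lambda^J_k$ chosen to contain the moved vertex $v_\ell$ together with appropriate vertices on both sides of $v_k$ in the chain of $J$. In either direction of the elementary homotopy, the prism $\tau\otimes\Delta^1$ contains a simplex whose image in $\Delta^J$ necessarily has vertex set $\{0,\ldots,N\}\setminus\{k\}$, i.e. equals the missing face $d_k\Delta^J$, which is not a simplex of $\Lambda^J_k$. Because the filtration constrains the value $r(v_\ell)$ to at most the two vertices of the same colour adjacent to $v_\ell$, a short enumeration shows that no zigzag of elementary homotopies removes the obstruction either.

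The main obstacle is the converse direction. The forward direction is essentially bookkeeping on the prism decomposition; in contrast, the converse requires identifying a universal critical simplex whose prism homotopy is forced through the missing face $d_k\Delta^J$, and then verifying that every filtered retraction $r$ and every zigzag refinement of the homotopy $r\circ i\simeq\Id_{\Lambda^J_k}$ collides with this obstruction.
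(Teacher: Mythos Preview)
Your forward direction is essentially the paper's argument: define the retraction $s_{k-1}d_{k-1}$ onto the face $d_{k-1}\Delta^J\subset\Lambda^J_k$, build the straight-line homotopy on the prism, and check that it restricts to $\Lambda^J_k\otimes\Delta^1$. That part is fine.

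The converse, however, is where your sketch diverges from the paper and becomes too loose. The paper's key observation, which you do not make, is that the \emph{direction} of the elementary homotopy $H\colon\Lambda^J_k\otimes\Delta^1\to\Lambda^J_k$ forces monotonicity on vertices: if $H_{|t=0}=r\circ j$ and $H_{|t=1}=\Id$, then $r(e_i)\leq e_i$ for every vertex $e_i$. With this in hand one takes $l=\min\{i\mid r(e_i)\neq e_i\}$, so that $r(e_j)=e_j$ for $j<l$ and $r(e_l)=e_{l-1}$; colour considerations give $q_{l-1}=q_l$ and hence $l-1\neq k$. One then writes down the explicit prism simplex
\[
[(e_0,0),\dots,\widehat{(e_{l-1},0)},(e_l,0),(e_l,1),\dots,(e_N,1)]\ \subset\ d_{l-1}\Delta^J\otimes\Delta^1\ \subset\ \Lambda^J_k\otimes\Delta^1
\]
whose image under $H$ is $[e_0,\dots,e_{l-2},e_{l-1},e_l,\dots,e_N]=\Delta^J$ itself, not $d_k\Delta^J$ as you assert. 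This is an immediate contradiction since $H$ lands in $\Lambda^J_k$. Your argument, by contrast, never pins down where the moved vertex $v_\ell$ goes relative to $v_\ell$, leaves the ``critical simplex $\tau$'' and the ``appropriate vertices on both sides of $v_k$'' unspecified, and claims the wrong target face; as written it is not a proof.

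Finally, note that the paper explicitly restricts the converse to \emph{elementary} homotopy equivalences and does not treat zigzags. Your sentence ``a short enumeration shows that no zigzag of elementary homotopies removes the obstruction'' is not an argument: a zigzag destroys the monotonicity constraint above, and you would need a genuinely different idea to handle it. Either restrict your statement as the paper does, or supply an actual obstruction that survives composition of elementary homotopies.
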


\begin{proof}
Assume that the horn $\Lambda^J_k\to \Delta^J$ is admissible. Then $\Delta^J=[q_0,\dots,q_{k-1},q_k,q_{k+1},\dots,q_{N}]$ and either $q_k=q_{k-1}$ or $q_k=q_{k+1}$. Write $(\Delta^J)_0=\{e_0,\dots,e_{N}\}$ for the set of vertices of $\Delta^J$. Assume that $q_k=q_{k-1}$ and define a homotopy inverse at the level of vertices
\begin{align*}
(\Delta^J)_0 &\to (\Lambda^J_k)_0\\
e_i &\mapsto \left\{\begin{array}{cl}
e_i &\text{ if $i\not = k-1$}\\
e_k &\text{ if $i=k-1$}
\end{array}\right.
\end{align*}
this map extends to a map of simplicial sets with image included in $d_{k-1}(\Delta^J)\subset \Lambda^J_k$. Furthermore, since $q_{k-1}=q_k$, it is a map of filtered simplicial sets.
The compositions $\Delta^J\to \Lambda^J_k\to \Delta^J$ corresponds to sending $\Delta^J$ to $d_{k-1}\Delta^J\subset \Delta^J$. a homotopy with the identity is given, at the level of vertices, by the following 
\begin{align*}
(\Delta^J\otimes \Delta^1)_0&\to (\Delta^J)_0\\
(e_i,s)&\mapsto \left\{ \begin{array}{cl}
e_i &\text{ if $s=0$ or ($s=1$ and $i\not=k-1$)}\\
e_k &\text{ if $s=1$ and $i=k-1$}
\end{array}\right.
\end{align*}
One checks that this gives a well defined homotopy with the identity. Furthermore, the restriction of the map to $\Lambda^J_k$ gives a homotopy between the composition $\Lambda^J_k\to \Delta^J\to \Lambda^J_k$ and $\Id_{\Lambda^J_k}$. The proof works the same in the case $q_k=q_{k+1}$, one has to replace $k-1$ by $k+1$ everywhere and reverse the role of $s=0$ and $s=1$ in the homotopy.

To show the converse, assume that $j\colon\Lambda^J_k\to \Delta^J$ is any horn inclusion, and that there is a filtered homotopy inverse $r\colon\Delta^J\to \Lambda^J_k$. By symmetry, suppose that the elementary homotopy $H\colon \Lambda^J_k\otimes \Delta^1\to \Lambda^J_k$ is a homotopy from $r\circ j$ to $\Id$, that is $H_{|\Lambda^J_k\otimes \{0\}}= r\circ j$ and $H_{|\Lambda^J_k\otimes \{1\}}= \Id$. This means that if $e_i$ is a vertex of $\Lambda^J_k$, we have $r(e_i)=e_{i'}\leq e_i$, where the order is the usual partial order on the vertices of a simplicial set. Suppose that $q_k\not= q_{k-1},q_{k+1}$, this means that $r(e_k)=e_k$, since $r$ is filtered. Define $l=\min\{i\ |\ r(e_i)\not=e_i\}$. By the previous remark, $l\not= k$. Furthermore, we have that $r(e_l)=e_{l'}<e_l$. But then, since $r$ is a simplicial map, we must have $r(e_{l-1})=e_{l-1}\leq e_{l'}<e_l$. This means that $r(e_l)=e_{l-1}$. In addition, we must have $q_{l-1}=q_l\not = q_k$. Now consider the image by $H$ of the following simplex of $d_{l-1}\Delta^J\otimes \Delta^1\subset \Lambda^J_k\otimes \Delta^1$.
\begin{align*}
H([(e_0,0),\dots,\widehat{(e_{l-1},0)},(e_l,0),(e_l,1),\dots(e_N,1)]) &= [r(e_0),\dots,\widehat{r(e_{l-1})},r(e_l),e_l,\dots,e_N]\\
&= [e_0,\dots,e_{l-1},e_l,\dots,e_N]\\
&=\Delta^J\not\subset\Lambda^J_k
\end{align*}
\end{proof}

\begin{remarque}\label{RemarkElementaryNotNeeded}
Proposition \ref{AdmissibleHornHomotopyEquivalence} still holds if one replaces "elementary filtered homotopy equivalence" by "filtered homotopy equivalence". Indeed, if $i\colon \Lambda^J_k\to\Delta^J$ is a filtered homotopy equivalence, so must be its realization $\RealP{i}\colon \RealP{\Lambda^J_k}\to\RealP{\Delta^J}$ (see Definition \ref{DefinitionFilteredRealisation}, any zig-zag of elementary filtered homotopy equivalences realizes as a filtered homotopy equivalence). By Corollary \ref{DirectWhitehead}, this implies that $\RealP{\Lambda^J_k}$ and $\RealP{\Delta^J}$ have isomorphic filtered homotopy groups. In particular, $i$ must induce a weak equivalence of simplicial sets :
\begin{equation*}
\Map(\RealP{\Delta^J},\RealP{\Lambda^J_k})\to\Map(\RealP{\Delta^J},\RealP{\Delta^J})
\end{equation*}
The codomain of this map is always contractible, whereas one shows that if $e_k$ is the only vertex of $\Delta^J$ of color $p_k$ - that is, if $\Lambda^J_k$ is not admissible - we have the weak equivalence
\begin{equation*}
\Map(\RealP{\Delta^J},\RealP{\Lambda^J_k})\simeq \Map(\RealP{d_k(\Delta^J)},\RealP{\partial(d_k(\Delta^J))})
\end{equation*}
In particular, $\Map(\RealP{\Delta^J},\RealP{\Lambda^J_k})$ is not contractible in this case.
\end{remarque}

Recall the definition of a class of anodyne extension from \cite[Définition 1.3.10.]{Cisinski}. We state it for a class of morphisms in $\sS_P$.

\begin{defin}[Anodyne Extensions]\label{AnodyneExtension}
Let $\Lambda$ be a class of morphisms in $\sS_P$. It is a class of anodyne extensions if it satisfies the following conditions.
\begin{itemize}
\item (An0) There exists a set of morphisms $A$ such that $\Lambda$ is the saturated class generated by $A$.
\item (An1) If $X\hookrightarrow Y$ is a monomorphism, then $X\otimes \Delta^1\cup Y\otimes \{\epsilon\}\hookrightarrow Y\otimes \Delta^1$ is in $\Lambda$. Where $\{\epsilon\}$ can be either vertex of $\Delta^1$.
\item (An2) If $X\hookrightarrow Y$ is in $\Lambda$, then so is $X\otimes \Delta^1\cup Y\otimes \partial(\Delta^1)\hookrightarrow Y\otimes \Delta^1$.
\end{itemize}
\end{defin}

\begin{prop}\label{AdmissibleHornAnodyneExt}
Let $A$ be the set of admissible horn inclusions. 
The saturated class, $\Lambda$, generated by $A$ is a class of anodyne extensions.
\end{prop}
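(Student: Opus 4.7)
The plan is to verify axioms (An0), (An1), and (An2) of Definition \ref{AnodyneExtension}. Axiom (An0) is immediate, since the admissible horn inclusions form a small set indexed by pairs $(\Delta^J,k)$ and $\Lambda$ is by definition their saturation.

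For (An1), the strategy is the classical cellular attachment argument, carefully adapted to the filtered setting. Given a monomorphism $X\hookrightarrow Y$, a transfinite induction on the non-degenerate simplices of $Y$ not contained in $X$, together with the closure of $\Lambda$ under cobase change, reduces the problem to the special case $X=\partial\Delta^J$, $Y=\Delta^J$. In that case, the non-degenerate $(N+1)$-simplices of $\Delta^J\otimes\Delta^1$ are the staircase simplices
\[
\sigma_m = [(e_0,0),\ldots,(e_m,0),(e_m,1),\ldots,(e_N,1)], \quad 0\leq m\leq N.
\]
I would attach them to $\partial\Delta^J\otimes\Delta^1\cup\Delta^J\otimes\{0\}$ in decreasing order $m=N,N-1,\ldots,0$ (and in the opposite order when $\epsilon=1$). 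A face-by-face inspection of $\sigma_m$ shows that at stage $m$ only the face obtained by removing $(e_m,0)$ is missing: faces whose projection to $\Delta^J$ misses some vertex lie in $\partial\Delta^J\otimes\Delta^1$, the face removing $(e_m,1)$ coincides with $\sigma_m\cap\sigma_{m+1}$ and has already been attached (or lies in $\Delta^J\otimes\{0\}$ when $m=N$), and the faces removing $(e_i,0)$ for $i<m$ lie in $\partial\Delta^J\otimes\Delta^1$. Hence the attachment of $\sigma_m$ is a pushout of the $m$-th horn inclusion of $\sigma_m$. The filtration on $\sigma_m$ reads $[q_0,\ldots,q_{m-1},q_m,q_m,q_{m+1},\ldots,q_N]$ with $q_m$ appearing at both positions $m$ and $m+1$, so the $m$-th vertex is repeated and, by Definition \ref{AdmissibleHorn}, the horn is admissible. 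Each attachment therefore lies in $\Lambda$.

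For (An2), I would use a standard saturation argument. Let $\Lambda'$ be the class of monomorphisms $u\colon X\to Y$ such that $X\otimes\Delta^1\cup Y\otimes\partial\Delta^1\hookrightarrow Y\otimes\Delta^1$ lies in $\Lambda$. Since the pushout-product construction commutes with cobase change, retracts, and transfinite compositions, $\Lambda'$ is saturated, and it suffices to show that $\Lambda'$ contains the generating admissible horn inclusions. For an admissible horn $\Lambda^J_k\hookrightarrow\Delta^J$, one performs an analogous cell-by-cell attachment of the non-degenerate simplices of $\Delta^J\otimes\Delta^1$ missing from $\Lambda^J_k\otimes\Delta^1\cup\Delta^J\otimes\partial\Delta^1$; the repeated color $q_k$ coming from the admissibility of the original horn, together with the duplications introduced by $\otimes\Delta^1$, ensures that each horn that appears in the attachment is admissible.

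The main obstacle is the combinatorial bookkeeping in these cellular arguments — choosing an attachment order under which exactly one face of $\sigma_m$ is missing at each stage. The admissibility itself is essentially free: because $\Delta^1$ is unfiltered, the filtered product with $\Delta^J$ automatically duplicates the color of each $e_m$ at the staircase step $\sigma_m$, producing exactly the configuration required by Definition \ref{AdmissibleHorn}.
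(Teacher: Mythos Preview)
Your proposal is correct and follows essentially the same route as the paper: the paper also verifies (An0) by definition and derives (An1) and (An2) from the classical Gabriel--Zisman staircase argument (Lemmas~\ref{An1} and~\ref{An2}), noting that every horn inclusion appearing in that argument is automatically admissible because the duplicated vertex in each prism simplex $\sigma_m$ repeats the color $q_m$. The only cosmetic difference is packaging: the paper first records the stronger statement that the saturated classes generated by $A$, by the maps $\partial\Delta^J\otimes\Delta^1\cup\Delta^J\otimes\{\epsilon\}\hookrightarrow\Delta^J\otimes\Delta^1$, and by all maps $X\otimes\Delta^1\cup Y\otimes\{\epsilon\}\hookrightarrow Y\otimes\Delta^1$ coincide, and then uses this together with the isomorphism $(X\otimes K)\otimes L\simeq X\otimes(K\times L)$ to obtain (An2) as a special case of a general pushout-product lemma, rather than redoing the cellular attachment directly.
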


\begin{proof}
The axiom (An0) is verified by definition of $\Lambda$. The Axiom (An1) follows from lemma \ref{An1} and the axiom (An2) is a special case of lemma \ref{An2}
\end{proof}

\begin{lemme}\label{An1}
Let $B$ be the set of all inclusions of the form 
\begin{equation*}
\partial(\Delta^J)\otimes \Delta^1\cup \Delta^J\otimes\{\epsilon\}\hookrightarrow \Delta^J\otimes\Delta^1,
\end{equation*}
where $\Delta^J\in N(P)$, $\epsilon=0,1$ and $\{\epsilon\}$ is the corresponding vertex of $\Delta^1$,
and let $C$ be the class of all inclusions of the form 
\begin{equation*}
X\otimes \Delta^1\cup Y\otimes \{\epsilon\}\hookrightarrow Y\otimes\Delta^1
\end{equation*}
where $X\hookrightarrow Y$ is any inclusion of filtered simplicial set.
Then the saturated classes generated by $A$, $B$ and $C$ coincide.
\end{lemme}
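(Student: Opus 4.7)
The plan is to establish the four inclusions $B \subseteq C$, $C \subseteq \mathrm{sat}(B)$, $A \subseteq \mathrm{sat}(C)$, and $B \subseteq \mathrm{sat}(A)$, where $\mathrm{sat}(S)$ denotes the saturated class generated by $S$. Together these yield $\mathrm{sat}(A) = \mathrm{sat}(B) = \mathrm{sat}(C)$. The first inclusion $B \subseteq C$ is immediate, taking $X = \partial \Delta^J$ and $Y = \Delta^J$ in the definition of $C$.

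For $C \subseteq \mathrm{sat}(B)$, given a monomorphism $X \hookrightarrow Y$ of filtered simplicial sets, I would decompose $Y$ relative to $X$ by attaching its non-degenerate filtered simplices not in $X$ one at a time in order of increasing dimension; each attachment is a pushout of $\partial \Delta^{J_\alpha} \hookrightarrow \Delta^{J_\alpha}$. The auxiliary tower $Z_\alpha = Y_\alpha \otimes \Delta^1 \cup Y \otimes \{\epsilon\}$ then expresses $X \otimes \Delta^1 \cup Y \otimes \{\epsilon\} \hookrightarrow Y \otimes \Delta^1$ as a transfinite composition of pushouts of the corresponding elements $\partial \Delta^{J_\alpha} \otimes \Delta^1 \cup \Delta^{J_\alpha} \otimes \{\epsilon\} \hookrightarrow \Delta^{J_\alpha} \otimes \Delta^1$ of $B$.

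For $A \subseteq \mathrm{sat}(C)$, given an admissible horn $\Lambda^J_k \hookrightarrow \Delta^J$ with $q_k = q_{k-1}$ (the case $q_k = q_{k+1}$ being symmetric with $\epsilon$ swapped), I would exhibit it as a retract of the element of $C$ associated to the monomorphism $\Lambda^J_k \hookrightarrow \Delta^J$ itself, namely $\Lambda^J_k \otimes \Delta^1 \cup \Delta^J \otimes \{1\} \hookrightarrow \Delta^J \otimes \Delta^1$. The retract uses the inclusion $x \mapsto (x, 0)$ together with the retraction $\Delta^J \otimes \Delta^1 \to \Delta^J$ sending $(e_i, s)$ to $e_i$ except $(e_{k-1}, 1) \mapsto e_k$, which is precisely the filtered simplicial map built in the proof of Proposition \ref{AdmissibleHornHomotopyEquivalence}. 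Since this map preserves each face $d_j \Delta^J$ for $j \neq k$ and sends $\Delta^J \otimes \{1\}$ into $d_{k-1} \Delta^J \subseteq \Lambda^J_k$, it carries the subobject $\Lambda^J_k \otimes \Delta^1 \cup \Delta^J \otimes \{1\}$ into $\Lambda^J_k$, and its composite with $x \mapsto (x,0)$ is the identity, completing the retract.

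The main obstacle is $B \subseteq \mathrm{sat}(A)$, to be proved by a filtered prism argument. I would use the standard triangulation of $\Delta^{N(J)} \times \Delta^1$ into $(N+1)$-simplices $\sigma_i = [(0,0), \ldots, (i,0), (i,1), \ldots, (N,1)]$ for $i = 0, \ldots, N$. Since $\Delta^J$ has color sequence $[q_0, \ldots, q_N]$, the filtered type of $\sigma_i$ is $[q_0, \ldots, q_i, q_i, q_{i+1}, \ldots, q_N]$, with the color $q_i$ doubled at positions $i$ and $i+1$. For $\epsilon = 1$, attach the $\sigma_i$ in the order $i = 0, 1, \ldots, N$. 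A face-by-face inspection shows that at stage $i$ exactly one face of $\sigma_i$ remains unfilled: the faces $d_l \sigma_i$ with $l < i$ or $l > i+1$ lie in $\partial \Delta^J \otimes \Delta^1$, the face $d_0 \sigma_0$ equals $\Delta^J \otimes \{1\}$, and for $i \geq 1$ the face $d_i \sigma_i$ coincides with $d_i \sigma_{i-1}$, attached at the previous stage. The remaining missing face $d_{i+1} \sigma_i$ omits the vertex $(i, 1)$, whose color $q_i$ matches that of its predecessor $(i, 0)$, so the horn inclusion $\Lambda^{\sigma_i}_{i+1} \hookrightarrow \sigma_i$ is admissible. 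The case $\epsilon = 0$ is symmetric, attaching the $\sigma_i$ in reverse order, where the missing face at each stage omits the vertex $(i, 0)$, whose color matches that of its successor $(i, 1)$. The heart of the argument, and the point specific to the filtered setting, is the combinatorial verification that the doubled color of the prism simplices is precisely what makes the resulting horns admissible rather than arbitrary.
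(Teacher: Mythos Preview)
Your proposal is correct and is essentially the detailed version of the paper's proof, which simply cites \cite[Theorem IV.2.1]{Zisman} and observes that ``only admissible horn inclusions are used at each step.'' You have spelled out exactly that classical argument---the skeletal reduction for $C\subseteq\mathrm{sat}(B)$, the retract argument for $A\subseteq\mathrm{sat}(C)$, and the prism decomposition for $B\subseteq\mathrm{sat}(A)$---and correctly identified the one filtered-specific check: the doubled colour in each prism simplex $\sigma_i$ is precisely what makes the relevant horn admissible.
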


\begin{proof}
In the non-filtered case, this is \cite[Theorem IV.2.1]{Zisman}. The same proof works in the filtered case, as only admissible horn inclusions are used at each step.
\end{proof}

\begin{lemme}\label{An2}
Let $X\hookrightarrow Y$ be a morphism in $\Lambda$ and $Z\hookrightarrow W$ be an inclusion of filtered simplicial sets.
Then the morphism 
\begin{equation*}
X\times W\cup Y\times Z\hookrightarrow Y\times W
\end{equation*}
is in $\Lambda$.
\end{lemme}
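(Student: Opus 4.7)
My plan is the standard pushout-product argument for anodyne extensions, adapted to the filtered setting. It proceeds by two successive reductions followed by a combinatorial core step. First, fix the inclusion $Z\hookrightarrow W$ and let $\mathcal{A}$ be the class of monomorphisms $X\hookrightarrow Y$ in $\sS_P$ such that $X\times W\cup_{X\times Z} Y\times Z\hookrightarrow Y\times W$ lies in $\Lambda$. Since $\sS_P$ is a presheaf category by Proposition \ref{PresheavesCategory}, it is cartesian closed and the functor $-\times W$ preserves all colimits; consequently the pushout-product construction $(X\hookrightarrow Y)\mapsto (X\times W\cup Y\times Z\hookrightarrow Y\times W)$ sends pushouts, transfinite compositions and retracts to maps of the same kind. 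Combined with the saturation of $\Lambda$ (axiom (An0)), this shows $\mathcal{A}$ is saturated. It therefore suffices to check that $\mathcal{A}$ contains the set $A$ of admissible horn inclusions, i.e.\ the generators of $\Lambda$.

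Second, fix an admissible horn inclusion $\Lambda^J_k\hookrightarrow \Delta^J$ and let $\mathcal{B}$ be the class of monomorphisms $Z\hookrightarrow W$ for which $\Lambda^J_k\times W\cup \Delta^J\times Z\hookrightarrow \Delta^J\times W$ lies in $\Lambda$. By the symmetric argument (the functor $\Delta^J\times -$ and $\Lambda^J_k\times -$ both preserve colimits), $\mathcal{B}$ is saturated. Since $\sS_P$ is a presheaf category, every monomorphism in $\sS_P$ belongs to the saturated class generated by the boundary inclusions $\partial\Delta^{J'}\hookrightarrow\Delta^{J'}$ of filtered representables, via the standard skeletal filtration. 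Thus the proof reduces to the single statement: for any admissible horn inclusion $\Lambda^J_k\hookrightarrow\Delta^J$ and any filtered simplex $\Delta^{J'}$,
$$\Lambda^J_k\times\Delta^{J'}\cup_{\Lambda^J_k\times \partial\Delta^{J'}}\Delta^J\times\partial\Delta^{J'}\hookrightarrow \Delta^J\times\Delta^{J'}$$
belongs to $\Lambda$.

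For this core step I would adapt the classical shuffle argument of \cite[Chapter IV]{Zisman}: the non-degenerate top-dimensional simplices of the filtered product $\Delta^J\times\Delta^{J'}$ are indexed by monotone lattice paths in the $N(J)\times N(J')$ grid compatible with the projection to $N(P)$, and attaching them in a suitable order presents the desired inclusion as a transfinite composition of horn-inclusion pushouts, each with an explicitly determined missing vertex. The main obstacle, and the only substantive departure from the classical proof, is verifying that every horn encountered along the filling is \emph{admissible} in the sense of Definition \ref{AdmissibleHorn}. Writing $\Delta^J=[q_0,\dots,q_{N(J)}]$ with the admissibility condition $q_k=q_{k-1}$ or $q_k=q_{k+1}$, the filtration of each shuffle simplex is inherited from those of $\Delta^J$ and $\Delta^{J'}$, and one checks case by case that the repeated colour at index $k$ of $\Delta^J$ propagates to the required vertex repetition at the missing vertex of the horn being filled. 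Once this filtered bookkeeping is in place, the combinatorics of the filling are identical to the classical ones, and the lemma follows.
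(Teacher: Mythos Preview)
Your outline is correct, but the paper takes a different and slicker route that avoids the shuffle combinatorics entirely. Rather than reducing the first variable to admissible horn inclusions, the paper uses Lemma~\ref{An1} to replace the generating set $A$ by the set $B$ of maps of the form $(\partial\Delta^J\hookrightarrow\Delta^J)\mathbin{\hat\otimes}(\{\epsilon\}\hookrightarrow\Delta^1)$. The pushout-product of such a map with an arbitrary monomorphism $Z\hookrightarrow W$ is then, by associativity and symmetry of the pushout-product (which here follows from $(X\otimes K)\times Z\simeq (X\times Z)\otimes K$, a consequence of $X\otimes K=X\times F(K)$), isomorphic to
\[
\bigl((\partial\Delta^J\hookrightarrow\Delta^J)\mathbin{\hat\times}(Z\hookrightarrow W)\bigr)\mathbin{\hat\otimes}(\{\epsilon\}\hookrightarrow\Delta^1).
\]
The inner pushout-product is a monomorphism, so by Lemma~\ref{An1} (class $C$) the whole map lies in $\Lambda$. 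No further combinatorics are needed.

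Your direct approach via the shuffle decomposition of $\Delta^J\times\Delta^{J'}$ and a case-by-case admissibility check should also go through, but it essentially re-derives the combinatorial content already packaged in Lemma~\ref{An1}. The advantage of the paper's argument is that once Lemma~\ref{An1} is established (where the admissibility checking is confined to the special case of $\otimes\Delta^1$), Lemma~\ref{An2} follows formally from the interchange law without touching horns again. The advantage of your approach is that it is self-contained and does not require first identifying the alternative generating set $B$; but it is more labour-intensive, and the admissibility verification you allude to, while plausible, is not actually carried out.
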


\begin{proof}
In the non-filtered case, this is \cite[Proposition IV.2.2]{Zisman}. The same proof works in the filtered case using the fact that $(X\otimes K)\otimes L\simeq X\otimes(K\times L)\simeq (X\otimes L)\otimes K$ for any simplicial sets $K$ and $L$ and any filtered simplicial set $X$.
\end{proof}

Recall the following classical definition.
\begin{defin}
Let $f\colon X\to Y$ and $g\colon W\to Z$ be two morphisms. Then $f$ (respectively $g$) is said to have the left (respectively right) lifting property with respect to $g$ (respectively $f$) if for any commutative diagram of the form 

\begin{equation*}
\begin{tikzcd}
X
\arrow{d}{f}
\arrow{r}
& W
\arrow{d}{g}
\\
Y
\arrow{r}
\arrow[dashrightarrow]{ur}{h}
&Z
\end{tikzcd}
\end{equation*}

there exists a morphism $h$ making the two triangles commute.
We write RLP and LLP for those two properties.
We say that $f$ has the RLP with respect to a class of morphisms, $A$, if it has the RLP with respect to any morphisms in the class, in this case, we will write $f\in l(A)$. Similarly, we write $r(B)$ for the class of morphisms having the RLP with respect to a class of morphisms $B$.
\end{defin}

\begin{defin}\label{DefinClassCisinski}
We define the following classes of morphisms and objects in $\sS_P$.
\begin{itemize}
\item The cofibrations are the monomorphisms.
\item The trivial fibrations are the morphisms with the RLP with respect to cofibrations.
\item The anodyne extensions are the morphisms in $\Lambda$.
\item The naive fibrations are the morphisms having the RLP with respect to anodyne extensions.
\item The fibrant objects are the objects $X$ for which the morphism $X\to N(P)$ is a naive fibration.
\item The weak equivalences are the morphisms $f\colon X\to Y$ such that for all fibrant filtered simplicial set $Z$, the induced application between the sets of homotopy classes of maps $f^*\colon [Y,Z]\to [X,Z]$ is a bijection.
\item The trivial cofibrations are the cofibrations which are also weak equivalences.
\item The fibrations are the morphisms with the RLP with respect to trivial cofibrations.
\end{itemize}
\end{defin}

\begin{theo}\label{ApplyCisinski}
With the classes of cofibrations, fibration and weak equivalences defined above, $\sS_P$ is a cofibrantly generated closed model category.
\end{theo}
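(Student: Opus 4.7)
The plan is to invoke \cite[Théorème 1.3.22]{Cisinski} directly: that theorem takes as input a presheaf category equipped with an elementary homotopical datum (a functorial cylinder with two endpoint inclusions and a projection, interacting suitably with monomorphisms) together with a class of anodyne extensions in the sense of Definition \ref{AnodyneExtension}, and produces a cofibrantly generated closed model structure whose cofibrations are the monomorphisms, whose weak equivalences are the morphisms inducing bijections on homotopy classes into fibrant objects, and whose fibrations are the maps with the right lifting property against trivial cofibrations. This matches verbatim the classes defined just before the theorem, so the task reduces to checking Cisinski's hypotheses.

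All the ingredients have already been assembled in the preceding results. Proposition \ref{PresheavesCategory} identifies $\sS_P$ with the presheaf category $\Fun(\Delta(P)^{\op},\Set)$; the monomorphisms there are cofibrantly generated by the boundary inclusions $\partial\Delta^J\hookrightarrow \Delta^J$ via the usual skeletal filtration argument. The cylinder functor $X\mapsto X\otimes\Delta^1$ of Definition \ref{TensorProduct}, together with the natural transformations $\Id\to -\otimes\Delta^1$ induced by the two vertices of $\Delta^1$ and the projection $-\otimes\Delta^1\to \Id$ induced by the unique map $\Delta^1\to\Delta^0$, provides the required cellular cylinder. Finally, Proposition \ref{AdmissibleHornAnodyneExt} certifies that the saturated class $\Lambda$ generated by admissible horn inclusions satisfies (An0), (An1) and (An2), and Lemmas \ref{An1} and \ref{An2} together supply the compatibility of the cylinder with monomorphisms that Cisinski's framework requires.

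The main technical content has therefore already been shouldered by the preceding lemmas, where the key difficulty was ensuring that restricting to \emph{admissible} horn inclusions still suffices to saturate the class of inclusions $X\otimes\Delta^1\cup Y\otimes\{\epsilon\}\hookrightarrow Y\otimes\Delta^1$ coming from an arbitrary monomorphism $X\hookrightarrow Y$ — this is exactly what Lemma \ref{An1} asserts. Once this is granted, Cisinski's theorem applies without further adjustment, yielding cofibrant generation with generating cofibrations the boundary inclusions $\partial\Delta^J\hookrightarrow \Delta^J$ and generating trivial cofibrations (a small set of representatives of) the admissible horn inclusions $\Lambda^J_k\hookrightarrow \Delta^J$. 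The classes of morphisms produced by Cisinski's machine coincide with those defined just before the theorem, so the conclusion follows.
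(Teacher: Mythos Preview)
Your approach is the same as the paper's: the proof there is literally a one-line invocation of \cite[Théorème 1.3.22]{Cisinski}, and you have correctly identified the ingredients (Proposition~\ref{PresheavesCategory}, the cylinder of Definition~\ref{TensorProduct}, and Proposition~\ref{AdmissibleHornAnodyneExt}) that justify applying it.

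There is, however, a genuine overclaim in your last paragraph. You assert that Cisinski's theorem yields ``generating trivial cofibrations (a small set of representatives of) the admissible horn inclusions $\Lambda^J_k\hookrightarrow \Delta^J$''. This is not what \cite[Théorème 1.3.22]{Cisinski} provides. That theorem guarantees that the resulting model structure is cofibrantly generated, but it does \emph{not} identify the class of trivial cofibrations with the class of anodyne extensions; one only knows a priori that anodyne extensions are trivial cofibrations, not the converse. The paper makes this point explicitly immediately after the theorem (see \cite[Remarque 1.3.46]{Cisinski}), and the entirety of Section~\ref{SecondModelStructure} is devoted to establishing, by an independent argument via a filtered $\Exi$ functor, that the two classes do coincide (Proposition~\ref{CMFEgale}). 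So you should drop the claim about generating trivial cofibrations from your proof of this theorem; it is a separate, substantially harder result.
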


\begin{proof}
This is \cite[Théorème 1.3.22]{Cisinski} applied to the category of presheaves $\sS_P$.
\end{proof}

Although we know that the model structure on $\sS_P$ is cofibrantly generated  \cite[Théorème 1.3.22]{Cisinski}, we do not know that the set of admissible horn inclusions is the set of generating trivial cofibrations. In general, we have the following inclusions between classes 
\begin{align*}
&l(r(A))=\{\text{anodyne extensions}\}\subset\{\text{trivial cofibrations}\}\\
&r(A)=\{\text{naive fibrations}\}\supset \{\text{fibrations}\}=r(\{\text{trivial cofibrations}\})
\end{align*}
and those inclusions may be strict (see \cite[Remarque 1.3.46]{Cisinski}). The goal of the next section is to show that in the case of $\sS_P$, they are in fact equalities (Theorem \ref{TheoremCharacterizationsSetP}), which will allow us to further characterize this model structure, such as proving that it is combinatorial and that its weak equivalences are detected by filtered homotopy groups (Theorem \ref{WhatWeHaveSoFar}).

\section{Characterizing the model structure on $\sS_P$}\label{SecondModelStructure}
\label{SectionCharacterizingFibration}
We wish to characterize fibrations in terms of lifting properties with respect to anodyne extensions, in order to get a description of the model category "à la Kan".

To do this, we will follow the strategy of the article \cite{SeanMoss}. In his article, the author constructs the Quillen model structure on the category of simplicial sets, starting from the data of the anodyne extensions, using a fonctorial fibrant replacement: the $\Exi$ functor. We will do the same in a filtered setting, using the (filtered) anodyne extensions of Proposition \ref{AdmissibleHornAnodyneExt}. The data of such a functor is exactly what is needed to characterize fibrations in a Cisinski model structure (see Theorem \ref{TheoAppendixA}). 
The main ideas of the following proof can be arranged as follows:

First, define a filtered subdivision functor $\sd_P\colon \sS_P\to \sS_P$. (Definition \ref{FilteredSubdivision}).
Taking the adjoint, we get the filtered extension functor $\Ex_P\colon \sS_P\to \sS_P$. There is a natural inclusion $X\hookrightarrow \Ex_P(X)$, so by taking the colimit of 
\begin{equation*}
X\hookrightarrow \Ex_P(X)\hookrightarrow\Ex_P^2(X)\hookrightarrow\dots
\end{equation*}
we get $\Exi_P(X)$. This will be our candidate for a fibrant replacement of $X$.

Then, we define a class of morphisms, the class of filtered strong anodyne extensions (FSAE) (Definition \ref{FSAE}), whose retracts will give us the class of anodyne extensions (see Proposition \ref{AdmissibleHornAnodyneExt}). This class is defined in such a way that there are combinatorial characterisations which allow to decide whether or not a given morphism is in it.

Now, using the combinatorial characterisation of FSAE, we show that for any admissible horn inclusion, its subdivision is a FSAE. This statement, in turn, gives us that $\sd_P$ preserves anodyne extensions (Proposition \ref{sdPreserves}), and so $\Ex_P$ preserves (naive) fibrations.


The main step, the fact that $\Exi_P(X)$ is fibrant for all $X$ does not follow from abstract-nonsense and the proof in the non-filtered case can not be adapted directly. This is the step that required working with the rather involved definition of filtered subdivision (Definition \ref{FilteredSubdivision}). We then carefully modify the proof of \cite[Lemma III.4.7]{GoerssJardine} so that it suits the filtered case (Lemme \ref{ExFibrant}).

Equipped with those preliminary results, one can apply Theorem \ref{TheoAppendixA} to show that the class of fibrations and the class of naive fibrations coincide (Theorem \ref{TheoremCharacterizationsSetP}).
Note that the proof of this statement does not rely on the fact that $X\to \Exi_P(X)$ is a weak equivalence. Since we will not need this fact in the rest of the paper, we refer the reader to \cite[Appendix B]{TheseMoi}, for a proof that $\Exi_P$ is indeed a fibrant replacement functor.
%

\subsection{Filtered subdivision and its adjoint}
Recall that given a simplex $\Delta^n$, its subdivision is the simplicial set $\sd(\Delta^n)$ whose simplices are given by increasing chain of non-degenerate simplices of $\Delta^n$ :
\begin{equation*}
\sd(\Delta^n)_k=\{(\sigma_0,\dots,\sigma_k)\ |\ \sigma_0\subseteq \dots\subseteq \sigma_k \in (\Delta^n)_{\nd}\}
\end{equation*}
and faces and degeneracies are given by deletion or repetition of one term of the chain.

\begin{defin}[filtered subdivision]\label{FilteredSubdivision}
Let $\Delta^J= \Delta^N\xrightarrow{\pi} N(P)$ be a filtered simplex. We will define its filtered subdivision, $\sd_P(\Delta^J)$, as a subsimplicial set of $\sd(\Delta^N)\times N(P)$. As a simplicial set $\sd_P(\Delta^J)$ is defined as follows :
\begin{equation*}
\sd_P(\Delta^J)_k=\{((\sigma_0,\dots,\sigma_k),\Delta^K)\ | \ (\sigma_0,\dots,\sigma_k)\in \sd(\Delta^N),\Delta^K=\Delta^k\xrightarrow{\pi_K} N(P), \pi_K(\Delta^k)\subseteq\pi(\sigma_0)\}. 
\end{equation*}
Where the inclusion $\pi_{K}(\Delta^k)\subseteq \pi(\sigma_0)$ means that the subsimplicial set of $N(P)$ generated by $\pi_K(\Delta^K)$ is included in the one generated by $\pi(\sigma_0)$.
In words, the simplices of $\sd_P(\Delta^J)$ are pairs $(\sigma,\Delta^K)$ where $\sigma$ is a simplex of $\sd(\Delta^N)$ and $\Delta^K$ is a simplex of $N(P)$, and such that for every color of $\Delta^K$ there is a point of this color in $\sigma_0$.
The filtration on $\sd_P(\Delta^J)$ is given by the following composition 
\begin{equation*}
 \sd_P(\Delta^J)\hookrightarrow \sd(\Delta^N)\times N(P)\xrightarrow{\pr_{N(P)}} N(P).
\end{equation*}

When manipulating subdivided simplices, it will be convenient to have a more explicit way of writing down simplices. For this reason, we give the following equivalent description of $\sd_P(\Delta^J)$ 
\begin{equation*}
\sd_P(\Delta^J)_k=\{[(\sigma_0,q_0),\dots,(\sigma_k,q_k)]\ |\ \sigma_0\subseteq\dots\subseteq\sigma_k\in (\Delta^J)_{\nd}, q_0\leq \dots\leq q_k\in P, q_i\in \pi(\sigma_0),\forall i\}. 
\end{equation*}
Writing simplices this way, the filtration map then becomes :
\begin{align*}
\sd_P(\Delta^J)&\to N(P)\\
[(\sigma_0,q_0),\dots,(\sigma_k,q_k)]&\mapsto [q_0,\dots,q_k]
\end{align*}
This also allows for a more explicit description of faces and degeneracies. Indeed, $d_i$ corresponds to deleting the term $(\sigma_i,q_i)$ and $s_i$ corresponds to repeating it.
This definition gives rise to a functor 
\begin{align*}
\sd_P\colon\sS_P&\to \sS_P\\
X=\colim_{\Delta^J\in X}\Delta^J&\mapsto \sd_P(X)=\colim_{\Delta^J\in X}\sd_P(\Delta^J)
\end{align*}
Whose effect on the map is given by passing to the colimit.
\end{defin}

\begin{exemple}\label{ExampleNaiveSubdivision}
Let $\Delta^J=[p_0,p_0,p_1]$ be a filtered simplex over $P=\{p_0<p_1\}$. It is of the form $\pi\colon \Delta^{N(J)}\to N(P)$. The "naive" way of defining its subdivision would be to subdivide both $\Delta^{N(J)}$ and $N(P)$ and consider as a filtration the following composition, 
\begin{equation*}
\begin{tikzcd}
\sd(\Delta^{N(J)})
\arrow{r}{\sd(\pi)}
&\sd(N(P))
\arrow{r}{\lv}
&N(P)
\end{tikzcd}
\end{equation*}
where $\lv$ is the classical last-vertex map. Figure \ref{FigureSubdivisions} shows the non-subdivided simplex $\Delta^J$, its naive subdivision, its filtered subdivision, and the filtered subdivision of $[p_0,p_1,p_2]$. The filtration on the simplicial sets are represented by the color of their vertices. A red vertex is sent to $p_0$, a blue one is sent to $p_1$, and a green vertex is sent to $p_2$.
\end{exemple}

\begin{figure}[h]
\begin{tikzpicture}
\draw[black](-1,-1) -- (1,-1);
\draw[black](-1,-1) -- (0,1);
\draw[black](0,1) -- (1,-1);
\filldraw[blue](0,1) circle (2pt);
\filldraw[red] (1,-1) circle (2pt);
\filldraw[red] (-1,-1) circle (2pt);

\draw[black,cm ={1,0,0,1,(4cm,0cm)}](-1,-1) -- (1,-1);
\draw[black,cm ={1,0,0,1,(4cm,0cm)}](-1,-1) -- (0,1);
\draw[black,cm ={1,0,0,1,(4cm,0cm)}](0,1) -- (1,-1);
\draw[black,cm ={1,0,0,1,(4cm,0cm)}](-0.5,0) -- (0,-0.5)--(0.5,0);
\draw[black,cm ={1,0,0,1,(4cm,0cm)}](0,-1)--(0,-0.5)--(0,1);
\draw[black,cm ={1,0,0,1,(4cm,0cm)}](1,-1)--(0,-0.5)--(-1,-1);
\filldraw[blue,cm ={1,0,0,1,(4cm,0cm)}](0,1) circle (2pt);
\filldraw[red,cm ={1,0,0,1,(4cm,0cm)}] (1,-1) circle (2pt);
\filldraw[red,cm ={1,0,0,1,(4cm,0cm)}] (-1,-1) circle (2pt);
\filldraw[blue,cm ={1,0,0,1,(4cm,0cm)}] (-0.5,0) circle (2pt);
\filldraw[blue,cm ={1,0,0,1,(4cm,0cm)}] (0.5,0) circle (2pt);
\filldraw[blue,cm ={1,0,0,1,(4cm,0cm)}] (0,-0.5) circle (2pt);
\filldraw[red,cm ={1,0,0,1,(4cm,0cm)}] (0,-1) circle (2pt);

\draw[black,cm ={1,0,0,1,(8cm,0cm)}](-1,-1) -- (1,-1);
\draw[black,cm ={1,0,0,1,(8cm,0cm)}](-1,-1) -- (0,1);
\draw[black,cm ={1,0,0,1,(8cm,0cm)}](0,1) -- (1,-1);
\draw[black,cm ={1,0,0,1,(8cm,0cm)}](0,-1)--(0,-0.5)--(0,0.5)--(0,1);
\draw[black,cm ={1,0,0,1,(8cm,0cm)}](0.3,0.4)--(0,0.3)--(-0.3,0.4);
\draw[black,cm ={1,0,0,1,(8cm,0cm)}](0.7,-0.4)--(0,-0.3)--(-0.7,-0.4);
\draw[black,cm ={1,0,0,1,(8cm,0cm)}](-1,-1)--(0,-0.3)--(1,-1);
\draw[black,cm ={1,0,0,1,(8cm,0cm)}](0.7,-0.4)--(0,0.3)--(-0.7,-0.4);
\filldraw[blue,cm ={1,0,0,1,(8cm,0cm)}](0,1) circle (2pt);
\filldraw[red,cm ={1,0,0,1,(8cm,0cm)}] (1,-1) circle (2pt);
\filldraw[red,cm ={1,0,0,1,(8cm,0cm)}] (-1,-1) circle (2pt);
\filldraw[red,cm ={1,0,0,1,(8cm,0cm)}] (0,-1) circle (2pt);
\filldraw[red,cm ={1,0,0,1,(8cm,0cm)}](0,-0.3) circle (2pt);
\filldraw[red,cm ={1,0,0,1,(8cm,0cm)}](0.7,-0.4) circle (2pt);
\filldraw[red,cm ={1,0,0,1,(8cm,0cm)}](-0.7,-0.4) circle (2pt);
\filldraw[blue,cm ={1,0,0,1,(8cm,0cm)}](0.3,0.4) circle (2pt);
\filldraw[blue,cm ={1,0,0,1,(8cm,0cm)}](-0.3,0.4) circle (2pt);
\filldraw[blue,cm ={1,0,0,1,(8cm,0cm)}](0,0.3) circle (2pt);

\draw[black,cm ={1,0,0,1,(12cm,0cm)}](-1,-1) -- (1,-1);
\draw[black,cm ={1,0,0,1,(12cm,0cm)}](-1,-1) -- (0,1);
\draw[black,cm ={1,0,0,1,(12cm,0cm)}](0,1) -- (1,-1);
\draw[black,cm ={1,0,0,1,(12cm,0cm)}](-0.66,-0.33)--(-0.33,-0.5) -- (0.33,-0.5)--(0.66,-0.33);
\draw[black,cm ={1,0,0,1,(12cm,0cm)}](-0.33,0.33)--(0,0.17) -- (0.33,0.33);
\draw[black,cm ={1,0,0,1,(12cm,0cm)}](0.33,-1)--(0.33,-0.5) -- (0,0.17);
\draw[black,cm ={1,0,0,1,(12cm,0cm)}](-0.33,-1)--(-0.33,-0.5) -- (0,0.17);
\draw[black,cm ={1,0,0,1,(12cm,0cm)}](-1,-1)--(-0.33,-0.5);
\draw[black,cm ={1,0,0,1,(12cm,0cm)}](1,-1)--(0.33,-0.5);
\draw[black,cm ={1,0,0,1,(12cm,0cm)}](0.66,-0.33)--(0,0.17);
\draw[black,cm ={1,0,0,1,(12cm,0cm)}](-0.66,-0.33)--(0,0.17);
\draw[black,cm ={1,0,0,1,(12cm,0cm)}](-0.33,-1)--(0.33,-0.5);
\draw[black,cm ={1,0,0,1,(12cm,0cm)}](0,1)--(0,0.17);
\filldraw[red,cm ={1,0,0,1,(12cm,0cm)}] (-1,-1) circle (2pt);
\filldraw[red,cm ={1,0,0,1,(12cm,0cm)}] (-0.33,-1) circle (2pt);
\filldraw[red,cm ={1,0,0,1,(12cm,0cm)}] (-0.33,-0.5) circle (2pt);
\filldraw[red,cm ={1,0,0,1,(12cm,0cm)}] (-0.66,-0.33) circle (2pt);
\filldraw[blue,cm ={1,0,0,1,(12cm,0cm)}](1,-1) circle (2pt);
\filldraw[blue,cm ={1,0,0,1,(12cm,0cm)}](0.33,-1) circle (2pt);
\filldraw[blue,cm ={1,0,0,1,(12cm,0cm)}](0.66,-0.33) circle (2pt);
\filldraw[blue,cm ={1,0,0,1,(12cm,0cm)}](0.33,-0.5) circle (2pt);
\filldraw[green,cm ={1,0,0,1,(12cm,0cm)}](0,1) circle (2pt);
\filldraw[green,cm ={1,0,0,1,(12cm,0cm)}](-0.33,0.33) circle (2pt);
\filldraw[green,cm ={1,0,0,1,(12cm,0cm)}](0.33,0.33) circle (2pt);
\filldraw[green,cm ={1,0,0,1,(12cm,0cm)}](0,0.17) circle (2pt);
\end{tikzpicture}
\caption{From left to right, the simplex $\Delta^J=[p_0,p_0,p_1]$, its "naive" subdivision, its filtered subdivision $\sd_P(\Delta^{J})$, and the filtered subdivision of $[p_0,p_1,p_2]$}
\label{FigureSubdivisions}
\end{figure}


As in the non-filtered case, we will also make use of a last-vertex map.

\begin{defin}\label{FilteredLastVertex}
Let $\Delta^J=\Delta^N\xrightarrow{\pi}N(P)$ be a filtered simplex. We define the filtered last-vertex map on $\sd_P(\Delta^J)$ on the vertices
\begin{align*}
\sd_P(\Delta^J)_0&\to \Delta^J_0\\
(\sigma,q)&\mapsto \max\{e\ |\ \text{ $e$ is a vertex of $\sigma$}, \pi(e)=q\},
\end{align*}
where the maximum is taken over the order on the vertices of the simplicial set $\sigma$.
This extends to a map of simplicial set $\lv_P(\Delta^J)\colon\sd_P(\Delta^J)\to \Delta^J$ which will be filtered by definition of the filtration on $\sd_P(\Delta^J)$. Furthermore, this defines a natural transformation $\lv_P\colon \sd_P\to \Id$, via 
\begin{equation*}
lv_P(X)=\colim_{\Delta^J\in X}\lv_P(\Delta^J)\colon \colim_{\Delta^J\in X}\sd_P(\Delta^J)=\sd_P(X)\to X=\colim_{\Delta^J\in X}\Delta^J
\end{equation*}
\end{defin}


\begin{defin}\label{DefinitionEx}
The functor $\sd_P\colon \sS_P\to \sS_P$ has a right adjoint $\Ex_P(X)$, with $\Ex_P(X)_J=\Hom(\sd_P(\Delta^J),X)$. For $X$ a filtered simplicial set, we write $\iota_X\colon X\hookrightarrow \Ex_P(X)$ for the adjoint of $l.v_P\colon \sd_P(X)\to X$.
And we define $\Exi_P(X)$ as the colimit of the diagram 
\begin{equation*}
X\xrightarrow{\iota_X} \Ex_P(X)\xrightarrow{\iota_{\Ex_P(X)}} \Ex_P^2(X)\to \dots
\end{equation*}
\end{defin}

\begin{remarque}
Definitions \ref{FilteredSubdivision} and \ref{FilteredLastVertex} might seem arbitrary complicated. Indeed, given any filtered simplicial set $X\to N(P)$, we could have defined its subdivision as $\sd(X)$ with the filtration given by the composition $\sd(X)\to X\to N(P)$ - equivalently, by $\sd(X)\to\sd(N(P))\to N(P)$) - see Example \ref{ExampleNaiveSubdivision}. However, with this definition, and the associated definition of $\Exi_P$, Theorem \ref{ExFibrant}, stating that $\Exi_P(X)$ is fibrant, would not hold. See remark \ref{WhereWeNeedFilteredSubdivision}.
\end{remarque}

\subsection{Filtered Strong Anodyne Extensions}

We directly translate the formalism of \cite{SeanMoss} into the filtered setting. We refer to the original article for the details.

\begin{defin}[Filtered Anodyne Presentation]\label{FSAE}
Let $f\colon X\to Y$ be an inclusion of filtered simplicial sets. We write $X_{\nd}$ for the non degenerate simplices of $X$. A filtered anodyne presentation for $f$ is the data of 
\begin{itemize}
\item a partition $Y_{\nd}\setminus X_{\nd}=Y_{II}\coprod Y_{I}$,
\item a bijection $\varphi\colon Y_{II}\to Y_{I}$,  
\end{itemize}
such that :
\begin{itemize}
\item  If $\sigma\in Y_{II}$, there exists a unique $k$ such that $\sigma=d_{k}(\varphi(\sigma))$, and the deleted vertex in $\varphi(\sigma)$ shares its color with at least one vertex of $\sigma$.
\item The ancestral preorder is well founded.
\end{itemize}
Here the ancestral preorder is the transitive relation on $Y_{II}\coprod Y_{I}$ generated by $\sigma < \tau$ if either $\sigma$ is a proper face of $\tau$, or if $\tau\in Y_{II}$ and $\sigma\not=\tau$ is a proper face of $\varphi(\tau)$. 
\end{defin}

\begin{defin}
A morphism between filtered simplicial sets is a Filtered Strong Anodyne Extension (FSAE) if it admits a filtered anodyne presentation. 
\end{defin}

\begin{prop}
Any anodyne extension is the retract of a FSAE. 
\end{prop}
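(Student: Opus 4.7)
The plan is to show this by exhibiting the saturated class of anodyne extensions as contained in the retract closure of FSAEs. Since $\Lambda$ is by definition the saturated class generated by the set $A$ of admissible horn inclusions, and since every morphism in a saturated class generated by a set is a retract of a transfinite composition of pushouts of generators, it suffices to prove two things: (i) every admissible horn inclusion is itself a FSAE, and (ii) the class of FSAEs is stable under pushouts along arbitrary maps of filtered simplicial sets and under transfinite compositions. Granting (i) and (ii), any anodyne extension is a retract of a transfinite composition of pushouts of elements of $A$, hence a retract of a FSAE.

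For (i), I would unwind Definition~\ref{FSAE} on the inclusion $\Lambda^J_k \hookrightarrow \Delta^J$ with $\Delta^J = [q_0,\dots,q_N]$ and either $q_k = q_{k-1}$ or $q_k = q_{k+1}$. The non-degenerate simplices of $\Delta^J$ that do not lie in $\Lambda^J_k$ are exactly $\Delta^J$ itself and its missing face $d_k\Delta^J$. Set $Y_I = \{\Delta^J\}$, $Y_{II} = \{d_k\Delta^J\}$, and define $\varphi(d_k\Delta^J) = \Delta^J$. Then $\sigma = d_k(\varphi(\sigma))$ with the required unique index $k$, the deleted vertex of $\varphi(\sigma)$ has color $q_k$, which by admissibility is repeated in $\sigma$, and the ancestral preorder on a two-element set is trivially well-founded. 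So $\Lambda^J_k \hookrightarrow \Delta^J$ carries a filtered anodyne presentation.

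For (ii), I would directly transfer anodyne presentations along the relevant constructions, following Moss's strategy in the non-filtered case. Given a pushout square with a FSAE $f\colon X\to Y$ along some $X\to X'$, the nondegenerate simplices of the pushout $Y'$ not lying in $X'$ are in natural bijection with elements of $Y_{II}\amalg Y_I$ (as $f$ is a monomorphism, this uses only elementary facts about presheaf pushouts and filtered simplicial sets), the filtration is inherited, the partition and $\varphi$ transport, and the ancestral preorder on the image is a quotient of the original well-founded preorder, hence well-founded. For a transfinite composition $X_0 \hookrightarrow X_1 \hookrightarrow \cdots$ of FSAEs, take the disjoint union of the partitions and bijections at each stage; the composite ancestral preorder is well-founded because each stage's preorder is and the stages are well-ordered.

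The main obstacle I anticipate is checking the well-foundedness of the ancestral preorder in the pushout construction: one has to verify that when two distinct simplices of $Y$ become identified along the pushout with simplices arising from $X'$, no new infinite descending chain is created in the image partition. This is the combinatorially delicate part, and I would handle it by showing the pushout map $Y_{II}\amalg Y_I \to (Y')_{\nd}\setminus (X')_{\nd}$ is an order-reflecting bijection for the ancestral preorders, so well-foundedness is preserved. Everything else is routine bookkeeping that parallels the non-filtered argument, with the color-repetition condition on $\varphi$ being automatically preserved since it depends only on the local combinatorics of each simplex $\tau\in Y_{II}$ and its image $\varphi(\tau)$, which are unchanged by pushout or composition.
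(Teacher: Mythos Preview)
Your argument is correct and is precisely the content of the reference the paper cites: the paper's proof consists solely of ``See Remark 2.2 in \cite{SeanMoss}'', and what you have written is that remark unpacked in the filtered setting. One small clarification: your worry about distinct simplices of $Y$ becoming identified in the pushout is unfounded, since for a pushout of a monomorphism in a presheaf category the map $Y_{\nd}\setminus X_{\nd}\to (Y')_{\nd}\setminus (X')_{\nd}$ is always a bijection; this makes the well-foundedness check easier than you suggest, but your conclusion stands.
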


\begin{proof}
See Remark 2.2 in \cite{SeanMoss}
\end{proof}

\subsection{Properties of the filtered subdivision}

\begin{prop}\label{sdPreserves}
Let $f\colon X\to Y$ be an anodyne extension. Then $\sd_P(f)$ is an anodyne extension.
\end{prop}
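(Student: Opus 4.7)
The strategy has two steps: first reduce by saturation to admissible horn inclusions, then exhibit a filtered anodyne presentation for the subdivision of each such horn.

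First, since $\sd_P$ is a left adjoint by Definition \ref{DefinitionEx}, it preserves all small colimits. The pointwise colimit formula $\sd_P(X) = \colim_{\Delta^J\in X}\sd_P(\Delta^J)$ also makes it clear that $\sd_P$ preserves monomorphisms: if $X\hookrightarrow Y$, then a chain $[(\sigma_0,q_0),\dots,(\sigma_m,q_m)]$ belongs to $\sd_P(X)$ exactly when every $\sigma_i$ lies in $X$, so $\sd_P(X)$ is a sub-presheaf of $\sd_P(Y)$. By Proposition \ref{AdmissibleHornAnodyneExt}, the class $\Lambda$ of anodyne extensions is the saturation of the set of admissible horn inclusions; since saturated classes are closed under pushouts, transfinite composition, and retracts, and $\sd_P$ preserves all of these together with monomorphisms, the problem reduces to showing that $\sd_P(\Lambda^J_k)\hookrightarrow\sd_P(\Delta^J)$ is anodyne for every admissible horn inclusion $\Lambda^J_k\hookrightarrow\Delta^J$.

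Second, since every FSAE is an anodyne extension, it suffices to construct a filtered anodyne presentation for $\sd_P(\Lambda^J_k)\hookrightarrow\sd_P(\Delta^J)$. Fix an admissible horn; by symmetry assume $q_k=q_{k-1}$. A direct inspection shows that the non-degenerate simplices of $\sd_P(\Delta^J)$ outside $\sd_P(\Lambda^J_k)$ are exactly the chains $\tau=[(\sigma_0,q'_0),\dots,(\sigma_m,q'_m)]$ whose top face satisfies $\sigma_m\in\{\Delta^J,d_k\Delta^J\}$, since these are the only non-degenerate faces of $\Delta^J$ that are not contained in some $d_j\Delta^J$ with $j\ne k$. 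The pairing $\varphi\colon Y_{II}\to Y_I$ will send each such $\tau$ to the simplex $\varphi(\tau)$ obtained by inserting (respectively deleting) a distinguished vertex of the form $(\Delta^J,q_k)$ or $(d_k\Delta^J,q_k)$ at the canonically determined position in the chain, with the partition $Y_I\sqcup Y_{II}$ recording whether that vertex is already present. The admissibility hypothesis $q_k=q_{k-1}$ is used twice: it guarantees the color $q_k$ lives in $\pi(d_k\Delta^J)$ so that the insertion is meaningful, and it produces a vertex of the inserted chain which carries the same color as the deleted vertex of $\varphi(\tau)\to\tau$, which is exactly the color-sharing condition of Definition \ref{FSAE}. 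Well-foundedness of the ancestral preorder is immediate, as each arrow strictly decreases simplicial dimension.

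The main obstacle is the combinatorial bookkeeping involved in defining $\varphi$ uniformly. Several cases must be handled separately according to whether $\sigma_m=\Delta^J$ or $\sigma_m=d_k\Delta^J$, and within each case according to how the distinguished color $q_k$ interacts with the chain $q'_0\leq\cdots\leq q'_m$ and with the set $\pi(\sigma_0)$ of colors available to $\tau$. One must verify in every case that $\varphi$ is well defined and bijective, that both $\tau$ and $\varphi(\tau)$ remain in the non-degenerate part of $\sd_P(\Delta^J)\setminus\sd_P(\Lambda^J_k)$, and that the color-sharing requirement holds. Once this matching is in place, FSAE-hood, and therefore the proposition, follows formally.
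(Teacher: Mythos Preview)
Your reduction step is correct and matches the paper. The gap is in the second step, in two places.

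First, the claim that well-foundedness of the ancestral preorder ``is immediate, as each arrow strictly decreases simplicial dimension'' is false. In Definition~\ref{FSAE} the generating relation $\sigma<\tau$ includes the case where $\tau\in Y_{II}$ and $\sigma$ is a proper face of $\varphi(\tau)$; since $\varphi(\tau)$ has dimension $\dim\tau+1$, such a $\sigma$ can have the \emph{same} dimension as $\tau$. Ruling out infinite descending chains of type~II simplices of a fixed dimension is therefore nontrivial. In the paper's proof this is exactly the content of the directed-graph analysis (Figure~\ref{FigureDirectAncestors}): one tracks several bounded numerical invariants ($r$, $t$, $u$, $w$, $y$, $\gamma_0$, $\epsilon_0$) along the possible same-dimension moves and argues that only finitely many steps are possible. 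Your sketch provides no such argument.

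Second, the pairing you describe --- inserting or deleting a vertex of the form $(\Delta^J,q_k)$ or $(d_k\Delta^J,q_k)$ --- is too coarse to give a bijection. The paper's actual matching requires eight families (a)--(h); in particular, for chains whose top is $\Delta^J$ but which contain no $(d_k\Delta^J,\cdot)$ term, the inserted vertex is of the form $(\eta_w\cup\{e_k\},\gamma_0)$ or $(\kappa_1\setminus\{e_{k'}\},\gamma_0)$, depending on whether $e_k$ already appears in the initial segment of the chain. These are not of the shape you propose, and without them the pairing fails to cover all of $\sd_P(\Delta^J)_{\nd}\setminus\sd_P(\Lambda^J_k)_{\nd}$. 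The ``combinatorial bookkeeping'' you allude to is not a routine cleanup but the substance of the argument.
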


\begin{proof}
Since $\sd_P$ preserves retracts and compositions, it is enough to show that $\sd_P(\Lambda^J_{k}\hookrightarrow \Delta^J)$ is an anodyne extension for any admissible horn inclusion $\Lambda^J_{k}\hookrightarrow \Delta^J$. 
We define an anodyne presentation in the following way (see the proof of \cite[Proposition 2.14]{SeanMoss} for comparison with the non-filtered case). We first define the following sets of non-degenerate simplices in $\sd_P(\Delta^J)_{\nd}$.

\begin{enumerate}[label=(\alph*)]
\item \label{a} $[(\xi_1,\alpha_1),\dots,(\xi_q,\alpha_q),(d_{k}(\Delta^J),\beta_0),\dots,(d_{k}(\Delta^J),\beta_r),(\Delta^J,\gamma_1),\dots,(\Delta^J,\gamma_s)]$, \\ where $\beta_r\not=\gamma_1$, or $s=0$.
\item \label{b} $[(\xi_1,\alpha_1),\dots,(\xi_q,\alpha_q),(d_{k}(\Delta^J),\beta_0),\dots,(d_{k}(\Delta^J),\beta_r),(\Delta^J,\beta_r),(\Delta^J,\gamma_1),\dots,(\Delta^J,\gamma_s)]$.
\item \label{c} $[(\mu_0,\epsilon_0),\dots,(\mu_t,\epsilon_0),(\mu_t,\epsilon_1),\dots,(\mu_t,\epsilon_u),(\sigma_1,\nu_1),\dots,(\sigma_v,\nu_v),(\Delta^J,\gamma_0),\dots,(\Delta^J,\gamma_s)]$, \\ where $\epsilon_u\not =\nu_1$ or ($v=0$ and $\epsilon_u\not =\gamma_0$).
\item \label{d} $[(\mu_0,\epsilon_0),\dots,(\mu_t,\epsilon_0),(\mu_t,\epsilon_1),\dots,(\mu_t,\epsilon_u),(\mu_t,\nu_1),(\sigma_1,\nu_1),\dots \\
\phantom{X} \hspace{200pt} \dots,(\sigma_v,\nu_v),(\Delta^J,\gamma_0),\dots ,(\Delta^J,\gamma_s)]$,\\ where $\nu_1=\gamma_0$ if $v=0$.
\item \label{e} $[(\eta_0,\gamma_0),\dots,(\eta_w,\gamma_0),(\zeta_1,\gamma_0),\dots,(\zeta_x,\gamma_0),(\Delta^J,\gamma_0),\dots,(\Delta^J,\gamma_s)]$,
 where $\zeta_1\not = \eta_w\cup \{e_k\}$.
\item \label{f} $[(\eta_0,\gamma_0),\dots,(\eta_w,\gamma_0),(\eta_w\cup\{e_k\},\gamma_0),(\zeta_1,\gamma_0),\dots,(\zeta_x,\gamma_0),(\Delta^J,\gamma_0),\dots,(\Delta^J,\gamma_s)]$.
\item \label{g} $[(\theta_1,\gamma_0),\dots,(\theta_y,\gamma_0),(\kappa_1,\gamma_0),\dots,(\kappa_z,\gamma_0),(\Delta^J,\gamma_0),\dots,(\Delta^J,\gamma_s)]$, where  $\kappa_1\not = \theta_y\cup \{e_{k'}\}$.
\item \label{h} $[(\theta_1,\gamma_0),\dots,(\theta_y,\gamma_0),(\kappa_1\setminus\{e_{k'}\},\gamma_0),(\kappa_1,\gamma_0),\dots,(\kappa_z,\gamma_0),(\Delta^J,\gamma_0),\dots,(\Delta^J,\gamma_s)]$.
\end{enumerate}
where :
\begin{itemize}
\item $e_k$ is the $k$-th vertex of $\Delta^J$.
\item $e_{k'}$ is a fixed vertex of $\Delta^J$ sharing its color with $e_k$, and different from $e_k$. (At least one exists since $\Lambda^J_k$ is admissible.)
\item $q,r,s,t,u,v,w,x,y,z\geq 0$
\item $\mu_i,\sigma_i,\eta_i,\zeta_i,\theta_i,\kappa_i$ are faces of $\Delta^J$ not equal to $\Delta^J$ or $d_{k}(\Delta^J)$.
\item $(\alpha_i)$ and $(\nu_i)$ are (possibly empty) finite sequence of strictly increasing elements of $P$, where $i$ ranges from $1$ to $q$ and $v$ respectively.
\item $(\beta_i),(\epsilon_i)$ and $(\gamma_i)$ are non-empty finite sequences of strictly increasing elements of $P$, where $i$ ranges from $0$ to $r,u$ and $s$ respectively.
\item $e_k\not \in \eta_i$
\item $e_k\in \zeta_i$
\item $e_{k}\in \theta_i$ but $e_{k'}\not\in \theta_i$
\item $e_k,e_{k'}\in \kappa_i$
\end{itemize}
Then, we take the simplices of \ref{b}, \ref{d}, \ref{f} and \ref{h} to be the simplices of type I and those of \ref{a}, \ref{c}, \ref{e} and \ref{g} to be the simplices of type II, and $\varphi$ to be the morphism suggested by the notations. For example, starting from a simplex of \ref{a} :
\begin{equation*}
[(\xi_1,\alpha_1),\dots,(\xi_q,\alpha_q),(d_{k}(\Delta^J),\beta_0),\dots,(d_{k}(\Delta^J),\beta_r),(\Delta^J,\gamma_1),\dots,(\Delta^J,\gamma_s)]
\end{equation*}
the application $\varphi$ sends it to the simplex of \ref{b}
\begin{equation*}
[(\xi_1,\alpha_1),\dots,(\xi_q,\alpha_q),(d_{k}(\Delta^J),\beta_0),\dots,(d_{k}(\Delta^J),\beta_r),(\Delta^J,\beta_r),(\Delta^J,\gamma_1),\dots,(\Delta^J,\gamma_s)]
\end{equation*}
obtained by adding the vertex $(\Delta^J,\beta_r)$.
 It is easy to see from the definition of $\varphi$ that the vertex added by $\varphi$ shares its color with one already present in the corresponding type II simplex. One shows that every simplex of $\sd_P(\Delta^J)_{\nd}\setminus\sd_P(\Lambda^J_{k})_{\nd}$ appears exactly once in this list and that $\varphi$ is a bijection by a careful enumeration of the non-degenerate simplices of $\sd_P(\Delta^J)$. To see that the ancestral order is well founded, one has to check that there are no infinite strictly decreasing sequence of simplices. But notice that if $\sigma$ is a type I simplex, then any of its direct ancestor has to be of strictly lower dimension (since it has to be a proper face of $\sigma$). So it is enough to show that there are no infinite strictly decreasing sequence of type II simplices of any given dimension. 

Let us examine what happens when we start with a simplex of \ref{a}. 

\begin{equation}\label{SubdivisionSimplexExempleA}
[(\xi_1,\alpha_1),\dots,(\xi_q,\alpha_q),(d_{k}(\Delta^J),\beta_0),\dots,(d_{k}(\Delta^J),\beta_r),(\Delta^J,\gamma_1),\dots,(\Delta^J,\gamma_s)]
\end{equation}
Applying $\varphi$, we get the corresponding simplex of \ref{b}
\begin{equation*}
[(\xi_1,\alpha_1),\dots,(\xi_q,\alpha_q),(d_{k}(\Delta^J),\beta_0),\dots,(d_{k}(\Delta^J),\beta_r),(\Delta^J,\beta_r),(\Delta^J,\gamma_1),\dots,(\Delta^J,\gamma_s)]
\end{equation*}
Now the ancestors of \eqref{SubdivisionSimplexExempleA} of the same dimension are of one of the following forms.
\begin{align*}
&[(\xi_1,\alpha_1),\dots,\widehat{(\xi_i,\alpha_i)},\dots,(\xi_q,\alpha_q),(d_{k}(\Delta^J),\beta_0),\dots\\
&\hspace{180pt} \dots,(d_{k}(\Delta^J),\beta_r),(\Delta^J,\beta_r),(\Delta^J,\gamma_1),\dots,(\Delta^J,\gamma_s)]\\
&\hspace{360pt}1\leq i\leq q\\
&[(\xi_1,\alpha_1),\dots,(\xi_q,\alpha_q),(d_{k}(\Delta^J),\beta_0),\dots,\widehat{(d_{k}(\Delta^J),\beta_i)},\dots\\
&\hspace{180pt}\dots,(d_{k}(\Delta^J),\beta_r),(\Delta^J,\beta_r),(\Delta^J,\gamma_1),\dots,(\Delta^J,\gamma_s)]\\
&\hspace{360pt}0\leq i \leq r\\
&[(\xi_1,\alpha_1),\dots,(\xi_q,\alpha_q),(d_{k}(\Delta^J),\beta_0),\dots,(d_{k}(\Delta^J),\beta_r),(\Delta^J,\beta_r),(\Delta^J,\gamma_1),\dots \\
&\hspace{286pt}\dots,\widehat{(\Delta^J,\gamma_i)},\dots,(\Delta^J,\gamma_s)]\\
&\hspace{360pt}1\leq i\leq s
\end{align*}
In the first and last case, we get a simplex of \ref{b}.
In the second case the following things can happen
\begin{itemize}
\item $i<r$, then we get a simplex of \ref{b}
\item $i=r>0$, we get a simplex of \ref{a}, but notice that $r$ has decreased by $1$.
\item $i=r=0$, then we get a simplex in which $d_{k}(\Delta^J)$ does not appear. It can be in any of the sets \ref{c}, \ref{d}, \ref{e}, \ref{f}, \ref{g} or \ref{h}
\end{itemize}
In the end, what we get is that, among the direct ancestors of \ref{a} of the same dimension, those who are of type II are either in \ref{c}, \ref{e} or \ref{g} or are still in \ref{a} but the quantity $r$ has decreased. Now, doing the same discussion of cases for \ref{c}, \ref{e} and \ref{g} and compiling this information in the form of a directed graph, we get Figure \ref{FigureDirectAncestors}. One recognizes the top part of the graph as the discussion of cases on direct ancestors of \eqref{SubdivisionSimplexExempleA}.
%

\begin{figure}[h]
\begin{tikzpicture}[->,>=stealth',shorten >=0pt,auto,node distance=5cm,semithick]
\node[state](A){\ref{a}};
\node[state](C)[below of =A]{\ref{c}};
\node[state](E)[below left of=C]{\ref{e}};
\node[state](G)[below right of =C]{\ref{g}};

\path (A) edge [loop right] node{$r\searrow$} (A)
	edge node{} (C)
	edge [bend right] node{} (E)
	edge [bend left] node{} (G)

(C) edge [loop right]  node{$u\nearrow,t,\epsilon_0$} (C)
	edge [loop left] node{$t\searrow,\epsilon_0$} (C)
	edge [loop below] node{$\epsilon_0\nearrow$} (C)
	edge [bend right=15] node{} (G)
	edge [bend left=15] node{} (E)
(E) edge [loop left] node{$w\searrow$} (E)
	edge node{} (G)
	edge [bend left=15,red] node{$\gamma_0\nearrow$} (C)
(G) edge [loop right] node{$y\nearrow$} (G)
	edge [bend right=15,red] node[above right]{$\gamma_0\nearrow$} (C); 	
\end{tikzpicture}
\caption{The directed graph of direct ancestors of type II simplices. Following an arrow correspond to passing to an ancestor. The labels indicate which quantities describing the simplices increase ($\nearrow$), decrease ($\searrow$) or are kept constant along the path.}
\label{FigureDirectAncestors}
\end{figure}

Given a type II simplex in one of the four sets \ref{a}, \ref{c}, \ref{e}, \ref{g} we get a direct ancestor by following some arrow on the graph. The labels indicate how quantity varies along each arrow. A quantity next to the symbol $\nearrow$ (resp $\searrow$) means that it is strictly increasing (resp strictly decreasing) when passing to the ancestor. A quantity without any symbol next to it means that it is kept constant when passing to the ancestor. The quantity $\gamma_0$ is kept constant along all arrows, except along the red ones. Notice that if we take out the red arrows and the loops, \ref{g} becomes a sink.
Now since every time we follow the red arrows $\gamma_0$ increases and $\gamma_0$ is bounded, we can safely delete those arrows. But now, since all quantites increasing (or decreasing) when going through a given loop are bounded, every loop can only occur a finite number of times. Taking a closer look at the simplices of \ref{c}, one sees that after a finite number of steps, there are no path left on the graph. This exactly means that given any type II simplex, it only has a finite number of ancestors of the same dimension.
\end{proof}

\subsection{$\Exi_P(X)$ is fibrant}

In this section, we prove the following Lemma :

\begin{lemme}\label{ExFibrant}
Let $X$ be a filtered simplicial set, $\Exi_P(X)$ is fibrant.
\end{lemme}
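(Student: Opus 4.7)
The strategy is to adapt the proof of \cite[Lemma III.4.7]{GoerssJardine} to the filtered setting, using the filtered subdivision $\sd_P$ in place of the usual one.

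Let $\alpha\colon \Lambda^J_k \to \Exi_P(X)$ be given, with $\Lambda^J_k \hookrightarrow \Delta^J$ an admissible horn inclusion. Because $\Lambda^J_k$ has only finitely many non-degenerate simplices and $\Exi_P(X) = \colim_r \Ex_P^r(X)$ is a sequential colimit along the monomorphisms $\iota$, the map $\alpha$ factors through some $\Ex_P^r(X)$, yielding $\alpha_r\colon \Lambda^J_k \to \Ex_P^r(X)$. Iterating the $(\sd_P, \Ex_P)$-adjunction $r$ times produces an adjoint map
\[
\beta\colon \sd_P^r(\Lambda^J_k) \to X.
\]

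To extend $\alpha$, it will suffice to construct, for some integer $s \geq 0$, a map $\tilde\beta\colon \sd_P^{r+s}(\Delta^J) \to X$ whose restriction to $\sd_P^{r+s}(\Lambda^J_k)$ equals $\beta$ precomposed with the iterated filtered last-vertex map. The iterated adjoint of such $\tilde\beta$ will then provide a map $\Delta^J \to \Ex_P^{r+s}(X) \hookrightarrow \Exi_P(X)$ extending $\alpha$.

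The key ingredient is a filtered simplicial approximation result: for $s$ sufficiently large, there exists a filtered simplicial map
\[
h\colon \sd_P^s(\Delta^J) \to \Lambda^J_k
\]
whose restriction to $\sd_P^s(\Lambda^J_k)$ agrees with $\lv_P^s$. Granted such $h$, setting $\tilde\beta := \beta \circ \sd_P^r(h)$ and invoking naturality of the filtered last-vertex map produces the required extension.

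The main obstacle is building $h$ while respecting the filtration; this is exactly the step that distinguishes the filtered case from the classical one and that dictates the careful design of Definition \ref{FilteredSubdivision}. Admissibility of the horn (Definition \ref{AdmissibleHorn}) yields a filtered deformation retract of $\Delta^J$ onto a face contained in $\Lambda^J_k$, by Proposition \ref{AdmissibleHornHomotopyEquivalence}; after iterated filtered subdivision this retraction acquires enough color-preserving barycentric vertices to be realized by an honest filtered simplicial map. The point where $\sd_P$ (rather than the naive $\widetilde{\sd}_P$) is essential, as flagged in Remark \ref{WhereWeNeedFilteredSubdivision}, is that each non-degenerate simplex $[(\sigma_0,q_0),\dots,(\sigma_k,q_k)]$ of $\sd_P^s(\Delta^J)$ carries every color of the chain it sits over, giving the combinatorial flexibility needed to define $h$ on interior simplices while preserving the projection to $N(P)$. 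I would construct $h$ by induction on $s$ and on the simplex dimension, simultaneously tracking a filtered homotopy relating $h|_{\sd_P^s(\Lambda^J_k)}$ to $\lv_P^s$ so that agreement on the horn can be enforced on the nose at the final stage; the inductive extension across a new interior simplex is made possible precisely because the color repetition at the missing vertex guarantees a filtered fill along the deformation retract of Proposition \ref{AdmissibleHornHomotopyEquivalence}.
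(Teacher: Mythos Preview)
Your overall architecture matches the paper's: reduce to a finite stage of the tower by compactness, pass through the $(\sd_P,\Ex_P)$-adjunction, and build a filtered ``retraction after subdivision.'' Where your proposal diverges is in the target and in the actual construction. The paper does \emph{not} build a map $h\colon \sd_P^s(\Delta^J)\to \Lambda^J_k$; instead it constructs, for a fixed $s=2$, a map $h\colon \sd_P^2(\Delta^J)\to \Ex_P(\sd_P(\Lambda^J_k))$, i.e.\ for each simplex $\sigma$ of $\sd_P^2(\Delta^J)$ a map $h(\sigma)\colon \sd_P(\Delta^{J'})\to \sd_P(\Lambda^J_k)$. These $h(\sigma)$ are produced by an explicit vertex-level formula $f_\sigma$ with a three-case distinction on the color of the missing vertex (using the filtered last-vertex maps $\lv_{q_l}$), then pushed through the functor $G$ of Definition~\ref{FunctorBetweenSubdivisions}. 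The paper then verifies, by a careful case analysis, that the image of each $h(\sigma)$ actually lands in $\sd_P(\Lambda^J_k)$ and that the square over $\sd_P^2(\Lambda^J_k)$ commutes.

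That explicit formula is the entire content of the lemma, and it is exactly what your proposal does not supply. Your plan to ``construct $h$ by induction on $s$ and on the simplex dimension, simultaneously tracking a filtered homotopy'' is not a proof: you have not said what the map does on any simplex, nor how the inductive step is carried out, nor why the restriction to $\sd_P^s(\Lambda^J_k)$ can be made to equal $\lv_P^s$ on the nose rather than up to homotopy. Invoking Proposition~\ref{AdmissibleHornHomotopyEquivalence} gives a filtered deformation retraction of $\Delta^J$ onto a face, but that retraction is not simplicial after any finite number of filtered subdivisions without further argument, and the ``filtered fill'' you allude to has no definition here. The delicacy is real: the paper's three-case formula and the subsequent verification that $h(\sigma)(\sd_P(\Delta^{J'}))\subseteq \sd_P(\Lambda^J_k)$ occupy most of the proof, and this is precisely the place flagged by Remark~\ref{WhereWeNeedFilteredSubdivision} where the naive approach fails. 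As written, your proposal identifies the right obstacle but does not overcome it.
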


\begin{remarque}\label{WhereWeNeedFilteredSubdivision}
Here, we want to adapt the proof of \cite[Lemma III.4.7]{GoerssJardine}. Note that this is the part that required us to work with $\sd_P$. Indeed, if one tries to go through this proof using the naive subdivision (see Example \ref{ExampleNaiveSubdivision}), one sees that the dotted arrows in diagram \eqref{GoerssJardineDiagram} may not exist. 
\end{remarque}
For the proof of Lemma \ref{ExFibrant} we will make use of the filtered last-vertex map (Definition \ref{FilteredLastVertex}) and of two other last-vertex maps. This is the content of Definition \ref{LastVertex}.

\begin{defin}\label{LastVertex}
Let $(\sigma,r)$ be a vertex of a subdivision with $\sigma=[e_0,\dots,e_n]$.
Then, the last vertex of $\sigma$ is defined to be $\lv(\sigma)=e_n$.
We can extend this to a \textbf{non-filtered} application 
\begin{eqnarray*}
\lv\colon&\sd_P(X)&\to X\\
&[(\sigma_0,r_0),\dots,(\sigma_m,r_m)]&\mapsto [\lv(\sigma_0),\dots,\lv(\sigma_m)]. 
\end{eqnarray*}
Given $r\in P$, some color appearing in $\sigma$, we also define the last vertex of color $r$ in $\sigma$ to be 
\begin{equation*}
\lv_{r}(\sigma)=\max\{e\ |\ \text{ $e$ is a vertex of $\sigma$}, \pi(e)=r\}.
\end{equation*}
where the maximum is taken over the order on the vertices of the simplicial set $\sigma$.
\end{defin}

\begin{remarque}
Notice that we can express the \textbf{filtered} application $\lv_P$ of Definition \ref{FilteredLastVertex} as  
\begin{equation*}
\lv_P([(\sigma_0,r_0),\dots,(\sigma_m,r_m)])=[\lv_{r_0}(\sigma_0),\dots,\lv_{r_m}(\sigma_m)].
\end{equation*}
\end{remarque}

\begin{proof}[Proof of Lemma \ref{ExFibrant}]
We have to show that given any diagram of the form 

\begin{equation*}
\begin{tikzcd}
\Lambda^J_{k}
\arrow{r}
\arrow{d}
&\Exi_P(X)
\\
\Delta^J
\arrow[dashrightarrow]{ur}
&\phantom{X}
\end{tikzcd}
\end{equation*}

where $\Lambda^J_k$ is admissible, there exists a dotted arrow making the diagram commute. Since $\Lambda^J_{k}$ is compact, one can assume without loss of generality that its image lies in $\Ex_P(X)$. It is then enough to show that a dotted arrow exists for any diagram of the form

\begin{equation}\label{GoerssJardineDiagram}
\begin{tikzcd}
\Lambda^J_{k}
\arrow{r}{\lambda}
\arrow{d}
&\Ex_P(X)
\arrow{d}
\\
\Delta^J
\arrow[dashrightarrow]{r}
&\Ex_P^3(X)
\end{tikzcd}
\end{equation}
By adjunction, diagram (\ref{GoerssJardineDiagram}) is equivalent to the following diagram 
\begin{equation*}
\begin{tikzcd}
\sd_P^2(\Lambda^J_{k})
\arrow{r}{\lv_P^2}
\arrow{d}
&\Lambda^J_{k}
\arrow{d}{\lambda}
\\
\sd_P^2(\Delta^J)
\arrow[dashrightarrow]{r}
& \Ex_P(X)
\end{tikzcd}
\end{equation*}

It is now sufficient to find some dotted arrow making the following diagram commutative :

\begin{equation*}
\begin{tikzcd}
\sd_P^2(\Lambda^J_{k})
\arrow{r}{\lv_P^2}
\arrow{d}
&\Lambda^J_{k}
\arrow{d}
\arrow{dr}{\lambda}
&\phantom{X}
\\
\sd_P^2(\Delta^J)
\arrow[swap,dashrightarrow]{r}{h}
&\Ex_P(\sd_P(\Lambda^J_{k}))
\arrow[swap]{r}{\Ex_P(\lambda)}
&\Ex_P(X)
\end{tikzcd}
\end{equation*}

We define it in the following way. Let $\sigma= [(\sigma_0,q_0),\dots,(\sigma_n,q_n)]$ be a $J'$-simplex of the double filtered subdivision of $\Delta^J$. Keep in mind that each of the $\sigma_i=[(\tau^i_0,r^i_0),\dots,(\tau^i_{m_i},r^i_{m_i})]$ are themselves simplices of the filtered subdivision of $\Delta^J$. We have to construct $h(\sigma)\colon\sd_P(\Delta^{J'})\to \sd_P(\Lambda^J_{k})$.
First, we define an application $f_{\sigma}$ between the vertices of $\Delta^{J'}$ and the vertices of $\Delta^J$. Since we want to be able to refer to each of the vertices of $\Delta^J$ and $\Delta^{J'}$ independantly, we will write those simplices as the ordered set of their vertices : $\Delta^{J'}=\{d_0,\dots,d_n\}$ and $\Delta^J=\{e_0,\dots,e_{N(J)}\}$. Note that some of those vertices might share the same color, in particular, notice that $d_i$ is of color $q_i$, by construction. Let $p$ be the color of $e_k$.

\begin{align*}
f_\sigma\colon \Delta^{J'}_0&\to\Delta^J_0\\
(d_l)&\mapsto\left\{\begin{array}{cl}
\lv_{q_l}(\lv_{q_l}(\sigma_l))& \text{ if $p>q_l$, or $\sigma_l$ doesn't contain any vertex of color $p$,}\\
$\phantom{X}$ & \phantom{\text{ if $p>q_l$,}} \text{ or $\lv(\sigma_l)\not = d_{k}(\Delta^J), \Delta^J$.}
\\
\lv_{q_l}(\lv_p(\sigma_l)) &\text{ if $p< q_l$, and $\sigma_l$ contains a vertex of color $p$}\\
\phantom{X}  & \phantom{\text{ if $p< q_l$,}} \text{ and $\lv(\sigma_l)=d_{k}(\Delta^J)$ or $\Delta^J$.}
\\
e_k &\text{ if $p=q_l$, and $\lv(\sigma_l)=d_{k}(\Delta^J)$ or $\Delta^J$.}  
\end{array}\right.
\end{align*}
It is clear that only one of the three conditions is verified for any given $l$, so $f_{\sigma}$ is well defined. Since the application $\lv_{q_l}$ always returns a vertex of color $q_l$, it is clear that $f_{\sigma}$ preserves the color of vertices. 
Define $h(\sigma)\colon \sd_P(\Delta^{J'})\to\sd_P(\Delta^J)$ as follows. For $(\mu,s)$ a vertex of $\sd_P(\Delta^{J'})$, let $h(\sigma)(\mu,s)=(\nu,s)$, where $\nu$ is the face of $\Delta^J$ containing all the vertices $f_{\sigma}(d)$, $d\in \mu$. Then, define $h(\sigma)$ :
\begin{align*}
h(\sigma)\colon \sd_P(\Delta^{J'})&\to\sd_P(\Delta^J)\\
[(\mu_0,s_0),\dots,(\mu_m,s_m)]&\mapsto [h(\sigma)(\mu_0,s_0),\dots,h(\sigma)(\mu_m,s_m)]
\end{align*}
One checks that $h(d_i(\sigma))=h(\sigma\circ d^i)=h(\sigma)\circ \sd_P(d^i)=d_i(h(\sigma))$, and that the same is true for degeneracies. 

We need to prove that $h(\sigma)\in \Ex_P(\sd_P(\Lambda^J_{k}))$. Or in other words, that we have an inclusion $h(\sigma)(\sd_P(\Delta^{J'}))\subseteq \sd_P(\Lambda^J_{k})$. Suppose that $h(\sigma)(\sd_P(\Delta^{J'}))\not\subset (\sd_P(\Lambda^J_{k}))$. This means in particular that every vertex of $d_{k}(\Delta^J)$ must be in the image of $f_{\sigma}$. Recall that $\sigma=[(\sigma_0,q_0),\dots,(\sigma_n,q_n)]$, with $\sigma_l= [(\tau^l_0,r^l_0),\dots,(\tau^l_{m_l},r^l_{m_l})]$. So in particular, for every vertex of $d_{k}(\Delta^J)$ there must be at least one $\tau^l_i$ containing it. But then, $\tau^{n}_{m_n}$ must contain $d_{k}(\Delta^J)$ since it contains all the $\tau^l_i$ as faces.
Let $i'=\max\{i\ |\ d_{k}(\Delta^J)\not \subseteq \tau^{n}_i\}$. Then $\tau^n_{i'}\subseteq d_{k'}(\Delta^J)$ for some $k'\not = k$. Now we know that all $\tau^l_i$ are either containing $d_{k}(\Delta^J)$ or contained in $d_{k'}(\Delta^J)$. But since $e_{k'}$ is in the image by our hypothesis, there must be an $l'$ such that $f_{\sigma}(d_{l'})=e_{k'}$. We now know that $\lv(\sigma_{l})$ contains $d_{k}(\Delta^J)$ for all $l\geq l'$. Let $p'$ be the color of $e_{k'}$. We have :
\begin{itemize}
\item if $p'<p$, since $\Lambda^J_k$ is admissible, $e_k$ must share its color with either $e_{k+1}$ or $e_{k-1}$. By symmetry, suppose that $e_k$ and $e_{k+1}$ have the same color. There must be an $l$ such that $f_{\sigma}(d_l)=e_{k+1}$. Then necessarily $l>l'$, but then $\lv(\sigma_l)$ contains $d_{k}(\Delta^J)$ and so $f_{\sigma}(d_l)=e_k$ so there is a contradiction.
\item if $p'=p$. Then $f_\sigma(d_{l'})=e_k\not = e_{k'}$, which is a contradiction.
\item if $p'>p$. we know that $\lv(\sigma_{l'})$ must contain $d_{k}(\Delta^J)$. 
We have $e_{k'}=\lv_{p'}(\lv_p(\sigma_{l'}))$. In particular, $\lv_p(\sigma_{l'})$ contains $d_{k}(\Delta^J)$.
But since $p'>p$, we know that for any $l$, we have $d_k(\Delta^J)\subseteq \lv_p(\sigma_{l'})\subseteq \lv_{p'}(\sigma_l)$.
Now, $\Delta^J$ must contain at least two points of color $p'$, because each $\tau^{n}_{i}$ must contain at least a vertex of color $p$ and $\tau^{n}_{i'}$ does not contain $e_{k'}$.
But then, the other point of color $p'$ can not be in the image of $f_{\sigma}$ because $\lv_{p'}(\sigma_l)$ contains $d_{k}(\Delta^J)$ for all $l$ and $\lv_{p'}(d_{k}(\Delta^J))=e_{k'}$.
\end{itemize}


Now to see that the square is commutative, it is enough to notice that when $\sigma\in \sd_P^2(\Lambda^J_{k})$, we have $\lv(\sigma_l)\not= d_{k}(\Delta^J),\Delta^J$ for all $l$. In this case $f_{\sigma}(d_l)=\lv_P(\lv_P(\sigma_l,q_l))$, which makes it easy to compare the two compositions.
\end{proof}

\subsection{A full description of the combinatorial model category $\sS_P$}

\begin{theo}\label{TheoremCharacterizationsSetP}
The category of filtered simplicial sets, together with the following class of maps, is a combinatorial model category.
\begin{itemize}
\item The \textbf{cofibrations} are the monomorphisms,
\item the \textbf{trivial fibrations} are the maps with the right lifting property against cofibrations,
\item the \textbf{fibrations} are the maps with the right lifting property against all admissible horn inclusions,
\item the class of \textbf{trivial cofibrations} is the saturated class generated by the admissible horn inclusions,
\item A map $f\colon X\to Y$ is a \textbf{weak equivalence} if and only if $\Exi_P(f)\colon\Exi_P(X)\to\Exi_P(Y)$ is a filtered homotopy equivalence.
\end{itemize}
Furthermore, those class of morphisms coincide with those of Definition \ref{DefinClassCisinski} and Theorem \ref{ApplyCisinski}
\end{theo}

\begin{proof}
It suffices to prove that the class coincide with those of Theorem \ref{ApplyCisinski}, since we now the latter define a combinatorial model structure. The characterization of the class of weak equivalences follows from the fact that $X\to \Exi_P(X)$ is a weak equivalence for all $X$, a proof of which can be found in \cite[Appendix B]{TheseMoi}. To prove the characterization for the first four classes of maps, we will apply Theorem \ref{TheoAppendixA} to the presheaf category $\sS_P$ together with $\sd_P$ and $\Ex_P$. We have already shown the following :
\begin{itemize}
\item $\sd_P$ preserves monomorphisms (by construction) and anodyne extensions  (Lemma \ref{sdPreserves})
\item For any filtered simplicial set $X$, $\Exi_P(X)$ is fibrant (Lemma \ref{ExFibrant})
\end{itemize}
It remains to be proven that for any filtered simplex $\Delta^J$, the map $\lv_P\colon \sd_P(\Delta^J)\to \Delta^J$ is an absolute weak equivalence (see \cite[Definition 1.3.55]{Cisinski}) and surjective. The latter follows from the definition of $\lv_P$. Let $\Delta^{J_0}=[p_0,\dots,p_d]$ be the non-degenerated filtered simplex associated to $\Delta^J$. Consider the following map 
\begin{align*}
\Delta^{J_0}&\to \sd_P(\Delta^{J})\\
[p_0,\dots,p_n]&\mapsto [(\Delta^J,p_0),\dots,(\Delta^J,p_n)]
\end{align*}
It is a section of the composition $\sd_P(\Delta^J)\to \Delta^J\to \Delta^{J_0}$. The following map gives a homotopy between the composition $\sd_P(\Delta^J)\to\Delta^{J_0}\to\sd_P(\Delta^J)$ and the identity of $\sd_P(\Delta^J)$
\begin{align*}
H\colon \Delta^1\otimes \sd_P(\Delta^{J})&\to \sd_P(\Delta^J)\\
[(0,(\sigma_0,q_0)),\dots,(0,(\sigma_k,q_k)),(1,(\sigma_{k+1},q_{k+1})),\dots]&\mapsto [(\sigma_0,q_0),\dots,(\sigma_k,q_k),(\Delta^J,q_{k+1}),\dots]
\end{align*}
In particular, the map $\Delta^{J_0}\to \sd_P(\Delta^J)$ is a strong deformation retract (see \cite[Définition 1.3.25]{Cisinski}), so it is an anodyne extension \cite[Lemme 1.3.38]{Cisinski}. By \cite[Proposition 1.3.56]{Cisinski}, it is an absolute weak equivalence. Now consider the following section to the map $\Delta^J\to \Delta^{J_0}$, where we write $\Delta^J=[e^{p_0}_0,\dots,e^{p_0}_{j_{p_0}},e^{p_1}_0,\dots,e^{p_d}_{j_{p_d}}]$
\begin{align*}
f\colon \Delta^{J_0}&\to\Delta^{J}\\
[p_0,\dots,p_d]&\mapsto [e^{p_0}_0,\dots,e^{p_d}_0]
\end{align*}
Just as earlier, this section is a strong deformation retract, and so it is an anodyne extension and an absolute weak equivalence.  Then, consider the following commutative diagram
\begin{equation*}
\begin{tikzcd}
\sd_P(\Delta^{J_0})
\arrow{r}{\sd_P(f)}
\arrow[swap]{d}{\lv_P(\Delta^{J_0})}
&\sd_P(\Delta^J)
\arrow{d}{\lv_P(\Delta^J)}
\\
\Delta^{J_0}
\arrow{r}{f}
&\Delta^{J}
\end{tikzcd}
\end{equation*}
Since absolute weak equivalences are stable under composition \cite[Proposition 1.3.57]{Cisinski}, $f\circ\lv_P(\Delta^{J_0})=\lv_P(\Delta^J)\circ \sd_P(f)$ is an absolute weak equivalence. Since $\sd_P$ preserve anodyne extensions, $\sd_P(f)$ is also an anodyne extension, and an absolute weak equivalence. but then, by \cite[Proposition 1.3.57]{Cisinski}, $\lv_P(\Delta^J)$ is also an absolute weak equivalence.
\end{proof}

\section{Simplicial structure}\label{SimplicialStructure}

In this section we define a simplicial structure on the category of filtered simplicial sets and show that the model category we defined in previous sections is actually a simplicial model category. We then use a Quillen adjunction with a category of diagrams of simplicial sets to define the filtered homotopy groups of a fibrant filtered simplicial set. Applying those constructions we prove Theorem \ref{WhiteheadSimplicial} which characterizes weak equivalences between fibrant objects.

Recall definition \ref{TensorProduct} of the filtered simplicial set $X\otimes K$. 

\begin{defin}
Let $K$ be a simplicial set and $Y$ be a filtered simplicial set. We define $Y^K$ to be the following filtered simplicial set
\begin{align*}
Y^K\colon \Delta(P)^{\op}&\to \Set\\
\Delta^J&\mapsto \Hom_{\sS_P}(\Delta^J\otimes K,Y)
\end{align*}
\end{defin}

Proposition \ref{PowerK} then follows from the definition.
\begin{prop}\label{PowerK}
Let $K$ be a simplicial set. Then the functor $-\otimes K\colon \sS_P\to \sS_P$ is left adjoint to the functor $(-)^K\colon \sS_P\to \sS_P$.
\end{prop}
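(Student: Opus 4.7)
The plan is to establish the natural bijection $\Hom_{\sS_P}(X\otimes K, Y)\cong \Hom_{\sS_P}(X, Y^K)$, first for $X$ a representable filtered simplex and then for arbitrary $X$ by a colimit argument.

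First I would check the adjunction on representables. By the Yoneda lemma (using Proposition \ref{PresheavesCategory}), we have $\Hom_{\sS_P}(\Delta^J, Y^K) = Y^K(\Delta^J) = \Hom_{\sS_P}(\Delta^J\otimes K, Y)$, where the second equality is the very definition of $Y^K$. So the desired bijection holds tautologically when $X=\Delta^J$. The bijection is moreover natural in $\Delta^J$: a morphism $\Delta^{J'}\to\Delta^J$ in $\Delta(P)$ induces $\Delta^{J'}\otimes K\to\Delta^J\otimes K$ by functoriality of $-\otimes K$, and precomposition gives the required naturality square.

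Next I would extend to arbitrary $X$ by writing $X\simeq \colim_{\Delta^J\subseteq X}\Delta^J$, as noted just after Proposition \ref{PresheavesCategory}. The contravariant functor $\Hom_{\sS_P}(-, Y^K)$ sends this colimit to a limit, giving
\begin{equation*}
\Hom_{\sS_P}(X, Y^K) \simeq \lim_{\Delta^J\subseteq X}\Hom_{\sS_P}(\Delta^J, Y^K).
\end{equation*}
On the other hand, $-\otimes K$ preserves colimits: unwinding Definition \ref{TensorProduct}, $X\otimes K = X\times_{N(P)} F(K)$ is a pullback in the slice category $\sS_P=\sS_{\downarrow N(P)}$, and pullback along a fixed object in a presheaf topos (equivalently, in a locally cartesian closed category) admits a right adjoint and therefore preserves colimits. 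Hence $X\otimes K\simeq \colim_{\Delta^J\subseteq X}(\Delta^J\otimes K)$, and applying $\Hom_{\sS_P}(-, Y)$ converts this to the same limit as above.

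Combining the two computations through the representable case yields a natural bijection
\begin{equation*}
\Hom_{\sS_P}(X\otimes K, Y)\cong \lim_{\Delta^J\subseteq X}\Hom_{\sS_P}(\Delta^J\otimes K, Y) \cong \Hom_{\sS_P}(X, Y^K),
\end{equation*}
which is the claimed adjunction; naturality in $Y$ is immediate from the definition of $Y^K$ on morphisms. There is no real obstacle here: the only point that requires a small argument is the preservation of colimits by $-\otimes K$, and this follows formally from $\sS_P$ being a presheaf topos (hence locally cartesian closed), which is precisely the content of Proposition \ref{PresheavesCategory}.
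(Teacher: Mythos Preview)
Your proposal is correct and is essentially a detailed unpacking of the paper's one-line proof (``follows from the definition''): the definition of $Y^K$ is set up precisely so that Yoneda gives the bijection on representables, and you then extend by the canonical colimit presentation of $X$. The only minor remark is that your appeal to local cartesian closedness to justify colimit-preservation of $-\otimes K$ is slightly more machinery than needed---one can also note directly that colimits in $\sS$ (hence in $\sS_P$) are universal, i.e., stable under pullback---but this is a matter of taste and not a gap.
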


\begin{prop}\label{SimplicialCategory}
There is a simplicial category structure on $\sS_P$ where the simplicial set of maps between $X$ and $Y$ is given by 
\begin{equation*}
\Map(X,Y)_n= \Hom_{\sS_P}(X\otimes \Delta^n,Y)
\end{equation*}
\end{prop}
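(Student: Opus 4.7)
The plan is to verify that the tensor $-\otimes- \colon \sS_P \times \sS \to \sS_P$ of Definition \ref{TensorProduct} defines a (left) action of the monoidal category $(\sS, \times, \Delta^0)$ on $\sS_P$, together with a compatible cotensor given by the right adjoint $(-)^K$ of Proposition \ref{PowerK}. Once these data are in place, the formula $\Map(X,Y)_n = \Hom_{\sS_P}(X \otimes \Delta^n, Y)$ automatically produces the desired simplicial enrichment by a standard construction.

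First I would check the coherence isomorphisms for the tensor. The associativity isomorphism $(X \otimes K) \otimes L \simeq X \otimes (K \times L)$ was already invoked in the proof of Lemma \ref{An2}; it follows from $F(K) \times F(L) \simeq F(K \times L)$ (where the first $\times$ is the filtered product in $\sS_P$) combined with the associativity of the filtered product over $N(P)$. For the unit, $F(\Delta^0) = N(P)$ and the fact that the filtered product with $N(P)$ is the identity give $X \otimes \Delta^0 \simeq X$. Naturality and the pentagon/triangle coherences are then inherited from the cartesian structures on $\sS$ and $\sS_P$.

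Next I would assemble the simplicial structure on $\Map(X,Y)$: any morphism $\alpha \colon \Delta^m \to \Delta^n$ induces $\Id_X \otimes \alpha \colon X \otimes \Delta^m \to X \otimes \Delta^n$ and, by precomposition, the structure map $\Map(X,Y)_n \to \Map(X,Y)_m$. For the composition, given $g \in \Map(X,Y)_n$ and $f \in \Map(Y,Z)_n$, I would define their composite to be
\begin{equation*}
X \otimes \Delta^n \xrightarrow{\Id_X \otimes \delta} X \otimes (\Delta^n \times \Delta^n) \simeq (X \otimes \Delta^n) \otimes \Delta^n \xrightarrow{g \otimes \Id_{\Delta^n}} Y \otimes \Delta^n \xrightarrow{f} Z,
\end{equation*}
where $\delta$ is the diagonal on $\Delta^n$. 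The identity at $X$ comes from $\Id_{X \otimes \Delta^n}$ via the unit isomorphism $X \otimes \Delta^0 \simeq X$.

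The remaining and only nontrivial step is to verify the associativity and unit axioms for this composition. These reduce to the commutativity of diagrams built from the associator and unitor of the action together with the coassociativity and counitality of $\delta$ on $\Delta^n$, so no new input beyond the coherence already established is needed. This bookkeeping is the main, albeit routine, obstacle.
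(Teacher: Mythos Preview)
Your proposal is correct and follows essentially the same approach as the paper: both verify the unit isomorphism $X\otimes\Delta^0\simeq X$, the associator $(X\otimes K)\otimes L\simeq X\otimes(K\times L)$ (via $F(K)\times F(L)\simeq F(K\times L)$ and associativity of the fibered product), and the tensor--cotensor adjunction of Proposition~\ref{PowerK}. The only difference is packaging: the paper invokes \cite[Lemma~II.2.3]{GoerssJardine}, which reduces the simplicial category axioms to exactly these three checks (plus the fact that $X\otimes-$ preserves colimits, automatic here since $F$ is a left adjoint and $\sS_P$ is cartesian closed), whereas you spell out the composition law and its coherences by hand; citing that lemma would let you omit the last two paragraphs entirely.
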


\begin{proof}
By \cite[Lemma II.2.3]{GoerssJardine}, we only have to check the following three conditions 
\begin{itemize}
\item For $K$ any simplicial set, $-\otimes K$ is left adjoint to $(-)^K$. This is Proposition \ref{PowerK}.
\item For any filtered simplicial set $X$ the functor $X\otimes -$ commutes with colimits, and satisfy $X\otimes \Delta^0\simeq X$. The former is true by compatibility of the functor $\Pr$ with inner product and colimits. The latter is true because $\Pr(\Delta^0)\simeq N(P)$ and the projection $X\times N(P)\to X$ is always an isomorphism. Note that here and everywhere else in this article, the product of two filtered simplicial sets, denoted $"\times"$, means the inner product in the category $\sS_P$, which is a fibered product of simplicial sets over $N(P)$. (See Definition \ref{InnerProductFilteredSimplicialSets}).
\item For any simplicial sets $K,L$ and filtered simplicial set $X$ there is an isomorphism between $(X\otimes K)\otimes L$ and $X\otimes(K\times L)$ which is natural in $K,L$ and $X$.
We have the following sequence of natural isomorphism
\begin{align*}
(X\otimes K)\otimes L &= (X\times \Pr(K))\times \Pr(L)\\
&\simeq X\times (\Pr(K)\times \Pr(L))\\
&\simeq X\times \Pr(K\times L)\\
&= X\otimes(K\times L) 
\end{align*}
\end{itemize}
\end{proof}

\begin{theo}\label{SimplicialModelCategory}
With the functor $\Map$, $\sS_P$ is a simplicial model category.
\end{theo}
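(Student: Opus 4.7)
The plan is to verify the pushout--product axiom, namely that for every cofibration $i\colon A\hookrightarrow B$ in $\sS_P$ and every cofibration $j\colon K\hookrightarrow L$ in $\sS$, the induced map
\begin{equation*}
i\mathbin{\widehat{\otimes}} j\colon A\otimes L\cup_{A\otimes K}B\otimes K\longrightarrow B\otimes L
\end{equation*}
is a cofibration in $\sS_P$, and is trivial as soon as either $i$ or $j$ is. Together with the simplicial enrichment of Proposition \ref{SimplicialCategory} and the model structure of Theorem \ref{ApplyCisinski}, this is exactly what is needed.

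First, for the cofibration part, the key observation is the identity $U(X\otimes K)=U(X)\times K$, which holds because $F(K)=K\times N(P)$ and the fibered product $X\times_{N(P)}(K\times N(P))$ collapses to $U(X)\times K$. Since $U$ is a left adjoint by Proposition \ref{ForgetFree}, it preserves pushouts, and therefore sends $i\mathbin{\widehat{\otimes}} j$ to the classical pushout--product in $\sS$, which is a monomorphism. As $U$ reflects monomorphisms, $i\mathbin{\widehat{\otimes}} j$ is a cofibration in $\sS_P$.

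Next, when $i$ is a trivial cofibration, Proposition \ref{CMFEgale} places $i$ in the class $\Lambda$ of anodyne extensions. Since $F$ preserves monomorphisms, $F(j)\colon F(K)\hookrightarrow F(L)$ is a cofibration of filtered simplicial sets. Applying Lemma \ref{An2} to the pair $(i,F(j))$ yields that the inclusion $A\times F(L)\cup B\times F(K)\hookrightarrow B\times F(L)$, which is precisely $i\mathbin{\widehat{\otimes}} j$, lies in $\Lambda$, and hence is a trivial cofibration.

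The main obstacle is the dual case, where $j$ is a trivial cofibration in $\sS$. By a standard reduction using cofibrant generation in each variable, it suffices to treat a generating cofibration $i=\partial\Delta^J\hookrightarrow\Delta^J$ of $\sS_P$ together with a generating trivial cofibration $j=\Lambda^n_k\hookrightarrow\Delta^n$ of $\sS$, and to show that
\begin{equation*}
\partial\Delta^J\otimes\Delta^n\cup\Delta^J\otimes\Lambda^n_k\hookrightarrow\Delta^J\otimes\Delta^n
\end{equation*}
is an anodyne extension. Since $\Delta^n$ carries only the trivial filtration, $\Delta^J\otimes\Delta^n$ is $\Delta^J\times\Delta^n$ as a simplicial set, filtered via the composition of the first projection with $\pi_J\colon\Delta^J\to N(P)$. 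I would then attach the non-degenerate shuffle simplices of this product one at a time, ordered so that each attachment is a pushout along a single horn inclusion $\Lambda^{J'}_\ell\hookrightarrow\Delta^{J'}$. The crucial combinatorial point is that every horn arising this way is admissible in the sense of Definition \ref{AdmissibleHorn}: along each shuffle the filtration colors are constant throughout every block corresponding to a move in the $\Delta^n$-direction, so the omitted vertex always shares its color with a neighbor inside the same $\Delta^J$-column. Proposition \ref{AdmissibleHornAnodyneExt} then identifies the successive pushouts as pushouts along anodyne extensions, so the total inclusion lies in $\Lambda$.

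The main difficulty is setting up this ordering of shuffles so that at each stage an admissible, rather than a generic, horn is filled in. Once the combinatorial bookkeeping is in place, the remaining verifications are formal applications of the closure properties of $\Lambda$, and combining the three steps establishes SM7 and hence the simplicial model category structure asserted in the theorem.
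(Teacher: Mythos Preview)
Your approach is correct and differs from the paper's only in organization. The paper invokes \cite[Corollary II.3.12]{GoerssJardine}, which reduces SM7 to two checks: (a) the pushout--product of a cofibration $i$ in $\sS_P$ with $\partial\Delta^n\hookrightarrow\Delta^n$ is a cofibration, trivial when $i$ is; and (b) the pushout--product of any cofibration $i$ with $\{\epsilon\}\hookrightarrow\Delta^1$ is a trivial cofibration. Condition (a) is handled exactly as you do (monomorphisms for the cofibration part, Lemma~\ref{An2} for the trivial part), and condition (b) is precisely axiom (An1), already established in Lemma~\ref{An1}. So the paper never needs to treat the case ``$j$ is an arbitrary trivial cofibration in $\sS$'' directly: Goerss--Jardine's corollary absorbs that case into (b).

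Your direct shuffle attack on the case $j=\Lambda^n_k\hookrightarrow\Delta^n$ is workable but unnecessary, and you can shortcut it without any new combinatorics. Since the saturated class of anodyne maps in $\sS$ is generated by the pushout--products $(\partial\Delta^m\hookrightarrow\Delta^m)\mathbin{\widehat{\times}}(\{\epsilon\}\hookrightarrow\Delta^1)$, associativity of the pushout--product together with the isomorphism $(X\otimes K)\otimes L\simeq X\otimes(K\times L)$ reduces $i\mathbin{\widehat{\otimes}}(\Lambda^n_k\hookrightarrow\Delta^n)$ to maps of the form $\bigl[i\mathbin{\widehat{\otimes}}(\partial\Delta^m\hookrightarrow\Delta^m)\bigr]\mathbin{\widehat{\otimes}}(\{\epsilon\}\hookrightarrow\Delta^1)$, and the inner bracket is a cofibration by your first step, so Lemma~\ref{An1} (i.e.\ (An1)) finishes. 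This replaces your shuffle bookkeeping---which you correctly identify as the main difficulty and leave only sketched---with facts already proved in the paper.
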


\begin{proof}
By \cite[Corollary II.3.12]{GoerssJardine}, we only have to check that the following is true. 
\begin{itemize}
\item Let $f\colon X\to Y$ be a cofibration between filtered simplicial sets and $n\geq 0$, then the map 
\begin{equation*}
X\otimes (\Delta^n)\cup_{X\otimes \partial(\Delta^n)} Y\otimes \partial{\Delta^n}\to Y\otimes \Delta^n
\end{equation*}
is a cofibration that is trivial if $f$ is.
\item Let $f\colon X\to Y$ be a cofibration between filtered simplicial set, then the maps
\begin{equation*}
X\otimes \Delta^1\cup_{X\otimes \{\epsilon\}} Y\otimes \{\epsilon\}\to Y\otimes \Delta^1
\end{equation*}
are trivial cofibration, where $\epsilon$ is one of the two vertices of $\Delta^1$.
\end{itemize}
The first statement, in the case where $f$ is not trivial comes from the fact that cofibrations are monomorphisms and that all operations involved preserve monomorphisms. In the case that $f$ is trivial, this is a particular case of Lemma \ref{An2} applied to $Z\hookrightarrow W= \Pr(\partial(\Delta^n))\hookrightarrow \Pr(\Delta^n)$.
The second statement is the axiom (An1) of a class of anodyne extensions. See Definition \ref{AnodyneExtension}.
\end{proof}

\begin{defin}\label{FilteredDiagram}
We define the filtered diagram functor 
\begin{align*}
D\colon \sS_P&\to \Fun(\Delta(P)^{op},\sS)\\
X&\mapsto (\Delta^J\mapsto \Map(\Delta^J,X))
\end{align*}
\end{defin}

Now by \cite[Sections 11.6 and 11.7]{Hirschhorn} the category of diagrams $\Fun(\Delta(P)^{\op},\sS)$ admits a model structure, often called the projective model structure, in which fibrations and weak equivalences are determined levelwise. Fixing this model structure on $\Fun(\Delta(P)^{\op},\sS)$, we can state the following result.
 
\begin{prop}\label{DiagramFibration}
Let $f\colon X\to Y$ be a fibration between filtered simplicial sets. Then $D(f)$ is a fibration. In addition, $D(f)$ is trivial if and only if $f$ is trivial.
\end{prop}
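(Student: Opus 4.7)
The plan is to exploit the simplicial model structure on $\sS_P$ from Theorem \ref{SimplicialModelCategory}, and in particular the pushout--product form SM7: for a cofibration $i\colon A \hookrightarrow B$ and a fibration $p\colon W \to Z$ in $\sS_P$, the pullback-corner map
\begin{equation*}
\Map(B, W) \longrightarrow \Map(B, Z) \times_{\Map(A, Z)} \Map(A, W)
\end{equation*}
is a Kan fibration, trivial whenever one of $i$ or $p$ is.

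First I specialise this to the cofibration $\emptyset \hookrightarrow \Delta^J$ and the fibration $f$. Since $\Map(\emptyset,-)$ is the terminal simplicial set, the pullback-corner collapses to $\Map(\Delta^J, f)$, which is therefore a Kan fibration for every $J$, and a trivial Kan fibration when $f$ is trivial. As projective fibrations (respectively trivial projective fibrations) in $\Fun(\Delta(P)^{\text{op}}, \sS)$ are exactly the levelwise Kan fibrations (respectively trivial Kan fibrations), this settles the first assertion and the forward direction of the ``iff''.

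For the converse, I assume $f$ is a fibration with $D(f)$ projectively trivial and aim to show $f$ is trivial. Since cofibrations in the presheaf category $\sS_P$ are generated by the boundary inclusions $\partial \Delta^J \hookrightarrow \Delta^J$, it is enough to lift $f$ against each such inclusion. Applying SM7 to $\partial \Delta^J \hookrightarrow \Delta^J$ and $f$ yields a Kan fibration
\begin{equation*}
\pi_J \colon \Map(\Delta^J, X) \longrightarrow P_J := \Map(\Delta^J, Y) \times_{\Map(\partial \Delta^J, Y)} \Map(\partial \Delta^J, X),
\end{equation*}
and the desired lift corresponds to a preimage under $\pi_J$ of a given $0$-simplex of $P_J$. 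Since $\pi_J$ is a Kan fibration, it will be surjective on $0$-simplices as soon as it is a weak equivalence. Factoring $\Map(\Delta^J, f)$ as $\pi_J$ followed by the projection $P_J \to \Map(\Delta^J, Y)$, and noting that $\Map(\Delta^J, f)$ is a trivial Kan fibration by hypothesis, two-out-of-three reduces the problem to showing that the projection $P_J \to \Map(\Delta^J, Y)$ is a weak equivalence. As this projection is a pullback of $\Map(\partial \Delta^J, f)$, and trivial Kan fibrations are closed under pullback in $\sS$, it suffices to prove that $\Map(\partial \Delta^J, f)$ is a trivial Kan fibration.

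I establish this by induction on $N(J)$. The base case $N(J) = 0$ is immediate because $\partial \Delta^J = \emptyset$. For the inductive step, I decompose $\partial \Delta^J$ as an iterated pushout obtained by attaching its non-degenerate simplices in order of increasing dimension along the boundary inclusions $\partial \Delta^{J'} \hookrightarrow \Delta^{J'}$ with $N(J') < N(J)$. Applying $\Map(-, W)$ converts these pushouts into iterated pullbacks of simplicial sets, and combining the induction hypothesis on $\Map(\partial \Delta^{J'}, f)$ with the projective-triviality hypothesis on $\Map(\Delta^{J'}, f)$ lets one propagate triviality through the construction. The main obstacle is precisely this inductive step: the cellular decomposition must be arranged carefully so that, at each attachment, SM7, two-out-of-three, and the closure of trivial Kan fibrations under pullback can be marshalled to conclude the triviality of $\Map(-, f)$ on the enlarged piece, without requiring either $X$ or $Y$ to be globally fibrant.
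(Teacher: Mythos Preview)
Your proposal is correct and follows essentially the same architecture as the paper's proof: both use SM7 to get the forward direction, and for the converse both reduce, via the pullback-corner map and two-out-of-three, to showing that $\Map(\partial\Delta^J,f)$ is a weak equivalence (the paper packages this reduction as Lemma~\ref{LemmeDiagramme}). The only divergence is in how that last fact is established. The paper writes $\partial\Delta^J$ as the colimit of its simplices, identifies $\Map(\partial\Delta^J,X)$ with the corresponding limit, and invokes a $\lim\simeq\holim$ argument to conclude. Your inductive argument over the skeleta of $\partial\Delta^J$ is a more elementary substitute: at each cell-attachment, SM7 applied to $\coprod\partial\Delta^{J'}\hookrightarrow\coprod\Delta^{J'}$ and $f$ makes the relevant comparison map a Kan fibration, and then two-out-of-three together with pullback-stability of trivial fibrations (using the inductive hypothesis on the lower skeleton and on the $\Map(\partial\Delta^{J'},f)$) shows it is trivial, so the lift can be built in two stages. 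This avoids the appeal to homotopy limits at the cost of a slightly longer induction; both arguments are standard.
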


To prove Proposition \ref{DiagramFibration}, we will use the following lemma.

\begin{lemme}\label{LemmeDiagramme}
Let $f\colon X\to Y$ be a fibration. It is a trivial fibration if and only if, for all filtered simplices $\Delta^J$, the applications induced by $f$, $\Map(\partial(\Delta^J),X)\to \Map(\partial(\Delta^J),Y)$ and $\Map(\Delta^J,X)\to \Map(\Delta^J,Y)$ are weak equivalences in the Kan-Quillen model structure.
\end{lemme}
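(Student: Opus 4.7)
The plan is to handle the two implications separately, deriving the forward direction from the pushout--product axiom and constructing the lift in the backward direction by hand.

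For the forward implication, I would use that by Theorem \ref{SimplicialModelCategory} the category $\sS_P$ is a simplicial model category, so the SM7 axiom applies: pairing the cofibration $\emptyset\hookrightarrow K$ with the trivial fibration $f$, the induced map $\Map(K,X)\to \Map(K,Y)$ is a trivial Kan fibration in $\sS$. Specializing $K$ to $\partial\Delta^J$ and $\Delta^J$ yields the required weak equivalences.

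For the converse, assume that $\Map(\partial\Delta^J,f)$ and $\Map(\Delta^J,f)$ are weak equivalences for every $J$. Applying SM7 to $\emptyset\hookrightarrow K$ and to the fibration $f$ shows these maps are Kan fibrations, so they are in fact trivial Kan fibrations, hence surjective on every set of simplices. Since $\sS_P=\Fun(\Delta(P)^{\op},\Set)$ is a presheaf category (Proposition \ref{PresheavesCategory}), its cofibrations (the monomorphisms) are generated as a saturated class by the boundary inclusions $\partial\Delta^J\hookrightarrow\Delta^J$, so it suffices to find a diagonal in an arbitrary square
\begin{equation*}
\begin{tikzcd}
\partial\Delta^J \arrow{r}{a} \arrow{d} & X \arrow{d}{f}\\
\Delta^J \arrow{r}{b} & Y.
\end{tikzcd}
\end{equation*}
My plan is to build this diagonal in three steps. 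First, surjectivity of $\Map(\Delta^J,f)$ on $0$-simplices produces some $\tilde b\colon \Delta^J\to X$ with $f\tilde b=b$; a priori $\tilde b|_{\partial\Delta^J}\neq a$, but both lie over the vertex $b|_{\partial\Delta^J}$ of $\Map(\partial\Delta^J,Y)$, so they are both $0$-simplices of the fiber of the trivial Kan fibration $\Map(\partial\Delta^J,f)$ over that point. This fiber is a contractible Kan complex, hence path-connected, so I obtain a $1$-simplex $H\colon\partial\Delta^J\otimes\Delta^1\to X$ with $H|_0=a$, $H|_1=\tilde b|_{\partial\Delta^J}$, and $f\circ H=b|_{\partial\Delta^J}\circ\mathrm{pr}$. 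Second, $H$ and $\tilde b$ agree on $\partial\Delta^J\otimes\{1\}$ (under $\Delta^J\otimes\{1\}\simeq\Delta^J$), and both become $b\circ\mathrm{pr}$ after post-composition with $f$, so they glue into the commutative square
\begin{equation*}
\begin{tikzcd}
\partial\Delta^J\otimes\Delta^1\cup\Delta^J\otimes\{1\}\arrow{r}{H\cup\tilde b}\arrow{d} & X\arrow{d}{f}\\
\Delta^J\otimes\Delta^1\arrow{r}{b\circ\mathrm{pr}} & Y.
\end{tikzcd}
\end{equation*}
Third, axiom (An1) from Definition \ref{AnodyneExtension} tells us the left vertical map is an anodyne extension, so the naive fibration $f$ supplies a lift $\widetilde H\colon \Delta^J\otimes\Delta^1\to X$, whose restriction $\widetilde H|_{\Delta^J\otimes\{0\}}$ simultaneously covers $b$ and extends $a$.

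The only delicate point I foresee is the middle step, namely recognizing that the relevant homotopy between $a$ and $\tilde b|_{\partial\Delta^J}$ can be chosen in the fiber (not merely in $\Map(\partial\Delta^J,X)$); once contractibility of fibers of trivial Kan fibrations is invoked this is automatic, and the rest reduces to bookkeeping about the identifications $\Delta^J\otimes\{\epsilon\}\simeq\Delta^J$ and the axioms already at our disposal.
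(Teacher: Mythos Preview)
Your proof is correct, but it takes a different route from the paper for the reverse implication. The paper argues directly via the pullback: it observes that SM7 makes
\[
\Map(\Delta^J,X)\longrightarrow \Map(\Delta^J,Y)\times_{\Map(\partial\Delta^J,Y)}\Map(\partial\Delta^J,X)
\]
a Kan fibration, and then uses the hypothesis together with two-out-of-three (applied to the pullback square whose other legs are trivial fibrations by assumption) to conclude this map is a trivial Kan fibration, hence surjective on $0$-simplices --- which is exactly the lifting statement against $\partial\Delta^J\hookrightarrow\Delta^J$. By contrast, you only invoke SM7 in its easy form ($\emptyset\hookrightarrow K$) and then build the lift by hand: choose an arbitrary preimage $\tilde b$ of $b$, correct it along $\partial\Delta^J$ using a path in the contractible fiber of $\Map(\partial\Delta^J,f)$, and straighten the resulting partial homotopy via (An1). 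The paper's argument is shorter and packages everything into a single surjectivity statement; yours is more explicit and avoids the full pushout--product form of SM7, relying instead on the more primitive ingredient (An1). Both are entirely valid.
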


\begin{proof}
The direct implication comes directly from the fact that $\sS_P$ is a simplicial model category.
Let us prove the reverse implication. Let $\Delta^J$ be a filtered simplex. Consider the following diagram 
\begin{equation*}
\begin{tikzcd}
\Map(\Delta^J,X)
\arrow{dr}{(4)}
\arrow[bend left = 6]{drr}
\arrow[bend right = 20,swap]{ddr}{(2)}
&\phantom{X}
&\phantom{X}
\\
\phantom{X}
&\Map(\Delta^J,Y)\times_{\Map(\partial(\Delta^J),Y)} \Map(\partial(\Delta^J),X)
\arrow{r}
\arrow{d}{(3)}
& \Map(\partial(\Delta^J),X)
\arrow{d}{(1)}
\\
\phantom{X}
&\Map(\Delta^J,Y)
\arrow{r}
&\Map(\partial(\Delta^J),Y)
\end{tikzcd}
\end{equation*}
The simplicial model structure gives us that the morphisms labeled (1) and (2) are fibrations. They are in addition weak equivalences by hypothesis, so they are trivial fibrations. But then (3) is a trivial fibration because it is the pullback of a trivial fibration, and then (4) is a weak equivalence by the 2 out of 3 axiom. It is also a fibration because $\sS_P$ is a simplicial model category.
In particular, (4) is a surjection.
Now consider the following diagram
\begin{equation*}
\begin{tikzcd}
\partial(\Delta^J)
\arrow{r}
\arrow{d}
& X
\arrow{d}{f}
\\
\Delta^J
\arrow{r}
\arrow[dashrightarrow]{ur}
&Y
\end{tikzcd}
\end{equation*}
The solid arrows diagram corresponds to one element in $\Map(\Delta^J,Y)\times_{\Map(\partial(\Delta^J),Y)} \Map(\partial(\Delta^J),X)$. The fact that (4) is a surjection guarantees that a dotted arrow exist. We showed that $f$ has the RLP against all generating cofibrations, so it must be a trivial fibration.
\end{proof}

\begin{proof}[Proof of Proposition \ref{DiagramFibration}]
The fact that $D(f)$ preserve fibrations and trivial fibrations comes from the simplicial model category structure. Suppose $D(f)$ is a trivial fibration. Then, we have that all the application $\Map(\Delta^J,X)\to \Map(\Delta^J,Y)$ are weak equivalences.
We now have to show that all the applications $\Map(\partial(\Delta^J),X)\to \Map(\partial(\Delta^J),Y)$ are also weak equivalences to use Lemma \ref{LemmeDiagramme}.
To see this, recall that $\partial(\Delta^J)\simeq \colim_{\Delta^{J'}\subset \partial(\Delta^J)}\Delta^{J'}$. This is true for any simplicial set. Then, we have that
\begin{align*}
\Map(\partial(\Delta^J),X) & \simeq \Map\left(\colim_{\Delta^{J'}\subset \partial(\Delta^J)}(\Delta^{J'}),X\right)\\
&\simeq \lim_{\Delta^{J'}\subset \partial(\Delta^J)}\Map(\Delta^{J'},X)
\end{align*}
But since the limit is taken over fibrations, we have a weak equivalence $\lim \Map(\Delta^{J'},X)\sim \holim \Map(\Delta^{J'},X)$. Doing the same for $Y$, and using the fact that we have weak equivalences levelwise, we get 
\begin{align*}
\Map(\partial(\Delta^J),X) & \simeq \lim \Map(\Delta^{J'},X)\\
&\sim \holim \Map(\Delta^{J'},X) \\
&\sim \holim \Map(\Delta^{J'},Y) \\
& \simeq \lim \Map(\Delta^{J'},Y) \\
& \simeq \Map(\partial(\Delta^J),Y)
\end{align*}
We now have the required hypotheses to apply the lemma \ref{LemmeDiagramme}.
\end{proof}

\begin{prop}\label{FibrantWeakEquivalence}
Let $f\colon X\to Y$ be a morphism between fibrant objects, $f$ is a weak equivalence if and only if $D(f)$ is. 
\end{prop}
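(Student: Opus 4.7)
The plan is to prove both directions by combining the simplicial model category axiom SM7 (available via Theorem \ref{SimplicialModelCategory}) with Proposition \ref{DiagramFibration} and a standard factor–and–two–out–of–three argument.

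For the forward direction, I will show that $\Map(\Delta^J,-)$ preserves weak equivalences between fibrant objects by way of Ken Brown's lemma. The input is that each $\Delta^J$ is cofibrant (cofibrations being monomorphisms, and $\emptyset$ being initial in $\sS_P$) together with SM7 applied to the cofibration $\emptyset \hookrightarrow \Delta^J$ and any trivial fibration $p\colon X'\to Y'$ between fibrant objects: this yields that
\begin{equation*}
\Map(\Delta^J,X')\to \Map(\emptyset,X')\times_{\Map(\emptyset,Y')}\Map(\Delta^J,Y')\simeq \Map(\Delta^J,Y')
\end{equation*}
is a trivial fibration of simplicial sets (using $\Map(\emptyset,-)=\Delta^0$). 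Ken Brown's lemma then upgrades this to preservation of arbitrary weak equivalences between fibrant objects. Applying this pointwise in $\Delta^J$ shows that $D(f)$ is a levelwise weak equivalence, hence a weak equivalence in the projective model structure on $\Fun(\Delta(P)^{\op},\sS)$.

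For the reverse direction, assume $D(f)$ is a weak equivalence. I factor $f$ in $\sS_P$ as $X\xrightarrow{i} Z\xrightarrow{p} Y$ with $i$ a trivial cofibration and $p$ a fibration. Since $Y$ is fibrant and $p$ is a fibration, $Z$ is fibrant. In particular $i$ is a weak equivalence between fibrant objects, so by the forward direction $D(i)$ is a weak equivalence. The 2-out-of-3 axiom in the projective model structure, applied to $D(f)=D(p)\circ D(i)$, then gives that $D(p)$ is a weak equivalence. But $p$ is a fibration, so by Proposition \ref{DiagramFibration}, $D(p)$ is a fibration, and thus a trivial fibration. Invoking the ``trivial iff trivial'' clause of Proposition \ref{DiagramFibration} in the other direction, $p$ itself is a trivial fibration. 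A final application of 2-out-of-3 in $\sS_P$ to $f=p\circ i$ concludes that $f$ is a weak equivalence.

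The only subtle point is the forward direction, where one must justify that $\Map(\Delta^J,-)$ sends weak equivalences between fibrant objects to weak equivalences; the rest is a purely formal shuffling of SM7, the existence of a $(\text{triv.cof.},\,\text{fib.})$-factorization, and the reflection property of $D$ on fibrations proved in Proposition \ref{DiagramFibration}.
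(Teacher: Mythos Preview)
Your proof is correct and follows essentially the same route as the paper's. The paper phrases the forward direction more tersely by noting that Proposition \ref{DiagramFibration} already makes $D$ a right Quillen functor (hence preserves weak equivalences between fibrants via Ken Brown), whereas you re-derive that preservation directly from SM7; the reverse direction, via the $(\text{trivial cofibration},\text{fibration})$-factorization and two applications of 2-out-of-3 together with the reflection clause of Proposition \ref{DiagramFibration}, is identical.
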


\begin{proof}
Proposition \ref{DiagramFibration} implies that $D$ is a right Quillen functor. As such, it preserves weak equivalences between fibrant objects.
To show the reverse implication, let $f\colon X\to Y$ be a morphism between fibrant objects such that $D(f)$ is a weak equivalence. Then, let $f=pi$ be a factorisation of $f$ into a trivial cofibration followed by a fibration. Applying $D$ to the factorization, we get the following diagram
\begin{equation*}
\begin{tikzcd}
D(X)
\arrow{r}{D(f)}
\arrow[swap]{d}{D(i)}
&D(Y)
\\
D(Z)
\arrow[swap]{ur}{D(p)}
\end{tikzcd}
\end{equation*}
We know that $Z$ is a fibrant object, by construction. But then $D(i)$ is the image of a weak equivalence between fibrant objects, so it is still a weak equivalence. By the 2 out of 3 property, $D(p)$ is a weak equivalence. Hence by Proposition \ref{DiagramFibration}, $p$ is a trivial fibration. But now, $f$ is the composition of a trivial fibration and a trivial cofibration so it is a weak equivalence. 
\end{proof}

The last thing needed to define filtered homotopy groups is a suitable notion of pointings.

\begin{defin}[Pointing]\label{PointingSimplicialSets}
Let $X$ be a filtered simplicial set. Let $V\subset N(P)$ be any inclusion of filtered simplicial sets. A pointing of $X$ over $V$ is any map of filtered simplicial sets $\phi\colon V\to X$.
Given such a pointing and a subset $V'\subset V$ we get an  induced pointing over $V'$, $\phi_{|V'}\colon V'\to X$. A pointing of $X$ is the choice of some $V\subseteq N(P)$ and some morphism $\phi\colon V\to X$, the filtered simplicial set $V$ will often be left implicit.

Let $X$ be a filtered simplicial set, and $\phi\colon V\to X$ a pointing of $X$. For all $\Delta^J\subseteq V$, we get a pointing of $\Map(\Delta^J,X)$ by taking $\phi_{|\Delta^J}$ as a base-point. We write $(\Map(\Delta^J,X),\phi)$ for the corresponding pointed simplicial set with the convention that $(\Map(\Delta^J,X),\phi)=\emptyset$ if $\Delta^J\not \subset V$.\end{defin}

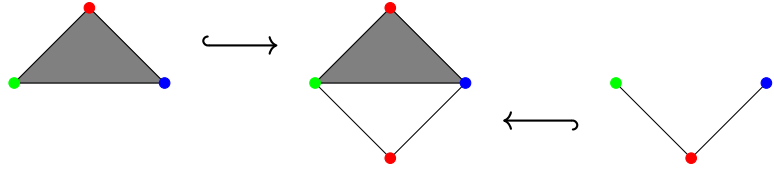
\begin{figure}[h]
\begin{tikzpicture}
\filldraw[gray] (0,0)--(1,1)--(2,0)--(0,0);
\draw[black] (0,0)--(1,1);
\draw[black](1,1)--(2,0);
\draw[black](2,0)--(0,0);
\filldraw[red] (1,1) circle (2pt);
\filldraw[blue] (2,0) circle (2pt);
\filldraw[green] (0,0) circle (2pt);
\draw[thick, right hook->] (2.5,0.5)--(3.5,0.5);

\filldraw[gray,cm ={1,0,0,1,(4cm,0cm)}] (0,0)--(1,1)--(2,0)--(0,0);
\draw[black,cm ={1,0,0,1,(4cm,0cm)}] (0,0)--(1,1);
\draw[black,cm ={1,0,0,1,(4cm,0cm)}](1,1)--(2,0);
\draw[black,cm ={1,0,0,1,(4cm,0cm)}](2,0)--(0,0);
\draw[black,cm ={1,0,0,1,(4cm,0cm)}] (0,0)--(1,-1);
\draw[black,cm ={1,0,0,1,(4cm,0cm)}](1,-1)--(2,0);
\filldraw[red,cm ={1,0,0,1,(4cm,0cm)}](1,-1) circle (2pt);
\filldraw[red,cm ={1,0,0,1,(4cm,0cm)}] (1,1) circle (2pt);
\filldraw[blue,cm ={1,0,0,1,(4cm,0cm)}] (2,0) circle (2pt);
\filldraw[green,cm ={1,0,0,1,(4cm,0cm)}] (0,0) circle (2pt);

\draw[black,cm ={1,0,0,1,(8cm,0cm)}] (0,0)--(1,-1);
\draw[black,cm ={1,0,0,1,(8cm,0cm)}](1,-1)--(2,0);
\filldraw[red,cm ={1,0,0,1,(8cm,0cm)}](1,-1) circle (2pt);
\filldraw[blue,cm ={1,0,0,1,(8cm,0cm)}] (2,0) circle (2pt);
\filldraw[green,cm ={1,0,0,1,(8cm,0cm)}] (0,0) circle (2pt);
\draw[thick, right hook->,cm ={1,0,0,1,(8cm,0cm)}] (-0.5,-0.5)--(-1.5,-0.5);
\end{tikzpicture}
\caption{Two pointings of a filtered simplicial set}
\label{FigurePointing}
\end{figure}

\begin{exemple}\label{ExamplePointing}
Let us look at examples of pointings. Figure \ref{FigurePointing} represents two different pointings of some fibrant filtered simplicial set, $X$. According to Remark \ref{PointingUpToHomotopy}, those are the only two pointing needed to apply Theorem \ref{WhiteheadSimplicial}.
\end{exemple}

\begin{remarque}
There are multiple ways of defining pointing for filtered simplicial sets that will give equivalent notion of filtered homotopy groups. Looking at remark \ref{PointingUpToHomotopy}, one sees that it would have been enough to define pointings over non-degenerate simplices of $N(P)$. Furthermore, given some $V\subseteq N(P)$ and some filtered simplicial set $X$, there may not exist a pointing of $X$ over $V$. Nonetheless, allowing for a more general notion of pointing allows one to consider less pointings in total. In Example \ref{ExamplePointing}, only $2$ pointing are needed whereas $3$ would be needed if one were to consider pointing over non-degenerate simplices of $N(P)$. Another question is whether the empty pointing should be allowed or not. One can choose to forbid it, for consistency with the non-filtered case, or to allow it to avoid future distinctions of cases. In any case, Definition \ref{PointingSimplicialSets} is made to handle partial pointings, of which the empty pointing is only an example. Note that when $\phi$ is a pointing of $X$ defined over a strict sub-simplicial set $V\subsetneq N(P)$, the filtered homotopy groups, $s\pi_n(X,\phi)$, of the next definitions are only \textit{partially} defined. That is, they are defined as functors from some subcategory $\Delta(V)^{\op}\subseteq\Delta(P)^{\op}$ to $\Set$ (or $\Grp$). To avoid unnecessary confusion, we will not make this distinction of cases in the future, and for $n\geq 1$, one can safely extend those functors by $0$ where they are not defined.
\end{remarque}

\begin{defin}[Filtered Homotopy Groups]\label{FilteredHomotopyGroupsSimplicial}
Let $X$ be a fibrant filtered simplicial set, and $\phi\colon V\to X$ be a pointing of $X$. We define the functor of filtered connected components of $X$ to be the functor
\begin{align*}
s\pi_0(X) \colon \Delta(P)^{\op}&\to \Set\\
\Delta^J&\mapsto \pi_0(\Map(\Delta^J,X)).
\end{align*} 
We can recover a functor which takes values in pointed sets, $ s\pi_0(X,\phi)$, by taking $s\pi_0(X,\phi)(\Delta^J)=\pi_0(\Map(\Delta^J,X),\phi)$.
Let $n$ be a positive integer.
We define the $n$-th filtered homotopy group of $X$ with base point $\phi$ to be the following functor :
\begin{align*}
 s\pi_n(X,\phi)\colon \Delta(P)^{\op}& \to \Grp\\
\Delta^J&\mapsto \pi_n(\Map(\Delta^J,X),\phi)
\end{align*}
\end{defin}

\begin{remarque}\label{RemarqueFilteredHomotopyGroupsSimplicialSets}
In Definition \ref{FilteredHomotopyGroupsSimplicial}, the functor $s\pi_0(X)$ can be seen as a filtered simplicial set since it is a functor from $\Delta(P)^{\op}$ to $\Set$ (see Proposition \ref{PresheavesCategory}). We will use this observation in order to "draw" the $0$-th filtered homotopy groups of filtered simplicial sets (or spaces), as a compact way of conveying the information it contains, see Figure \ref{FigureExamplePi0}. This point of view is also relevant when thinking about pointings, since they can be thought of as maps from pieces of $s\pi_0(X)$ into $X$, see remark \ref{PointingUpToHomotopy}.
One could also chose to consider the $n$-th filtered homotopy groups as filtered simplicial sets (or groups), but this point of view does not add much clarity. We will only use it in the statement $e)$ of Corollary \ref{CorollaireAlmostMiller}. Here, the $1$-skeleton of the $n$-th filtered homotopy groups is to be understood as the restriction of the functor $s\pi_n(A,\phi)$ to the full subcategory of $\Delta(P)^{op}$ containing the filtered simplices of dimension $\leq 1$.

The structure of the filtered homotopy groups is quite rich but in part redondant. We will see (Lemma \ref{WEonNonDegenerate}) that it is sufficient to compute the functor on non-degenerate simplices of $\Delta(P)$ (i.e. on simplices of the form $\Delta^{J_0}=[p_0,\dots,p_d]$ with $p_0<\dots<p_d$). What remains, is the data of (the homotopy groups of) the strata and generalized holinks (see Definition \ref{GeneralizedHolink}), together with the information of how they map into each other. For example, $s\pi_n(X,\phi)([p])=\pi_n(X_p,\phi([p]))$, where $X_p$ is the $p$-stratum of $X$, and $s\pi_n(X,\phi)([p,q])=\pi_n(\Hol_{p,q}(X),\phi([p,q]))$, where $\Hol_{p,q}(X)$ is the simplicial analogue of the homotopy links of the $p$-stratum into the $q$-stratum of $X$.



Furthermore, let $P=\{p\}$ be the poset with only one element, $K$ be a Kan complex, and $k\colon \Delta^0\to K$ a vertex of $K$. We can see $K$ as simplicial set trivially filtered over $P$, and compute $s\pi_n(K,k)$. There is only one non-degenerate simplex in $\Delta(\{p\})$, $[p]$, and $s\pi_n(K,k)([p])=\pi_n(K,k)$. In particular, we recover the usual definition of homotopy groups when working over a trivial poset.
\end{remarque}

We can now state a characterization of homotopy equivalences between fibrant filtered simplicial sets using the filtered homotopy groups.

\begin{theo}\label{WhiteheadSimplicial}
Let $f\colon X\to Y$ be a filtered map between fibrant filtered simplicial sets. It is a filtered homotopy equivalence if and only if the induced maps $ s\pi_n(f)\colon  s\pi_n(X,\phi)\to  s\pi_n(Y,f\circ\phi)$ and $ s\pi_0(f)\colon  s\pi_0(X)\to  s\pi_0(Y)$ are isomorphisms of functors for all $n\geq 1$ and all pointing $\phi\colon V\to X$.
\end{theo}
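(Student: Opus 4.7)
The forward direction is formal: if $f$ is a filtered homotopy equivalence then for each simplicial set $K$ the induced map $\Map(K,X)\to\Map(K,Y)$ is a homotopy equivalence of simplicial sets (this follows from the simplicial enrichment, Theorem \ref{SimplicialModelCategory}, together with Definition \ref{DefHomotopy}: an elementary homotopy inverse gives an elementary simplicial homotopy on each mapping space). Applied to $K=\Delta^J$, this induces isomorphisms on all homotopy groups for any choice of basepoint, hence on $s\pi_0$ and on $s\pi_n(-,\phi)$ for every pointing $\phi$.

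For the converse I would reduce to the projective model structure on $\Fun(\Delta(P)^{\op},\sS)$ via the diagram functor $D$ of Definition \ref{FilteredDiagram}. By Proposition \ref{FibrantWeakEquivalence}, $f$ is a weak equivalence between fibrant objects in $\sS_P$ if and only if $D(f)$ is; and since $\sS_P$ is a simplicial model category in which every object is cofibrant (cofibrations are monomorphisms), a weak equivalence between fibrant objects is automatically a filtered homotopy equivalence. So it suffices to show that for every filtered simplex $\Delta^J$, the map
\begin{equation*}
\Map(\Delta^J,f)\colon \Map(\Delta^J,X)\longrightarrow \Map(\Delta^J,Y)
\end{equation*}
is a weak equivalence of simplicial sets. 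Because $X$ and $Y$ are fibrant and $\Delta^J$ is cofibrant, both source and target are Kan complexes, so the classical Whitehead theorem for Kan complexes (see \cite{GoerssJardine}) applies: it is enough to show that this map is a $\pi_0$-isomorphism and a $\pi_n$-isomorphism at every basepoint for $n\geq 1$.

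The $\pi_0$ part is immediate from the hypothesis that $s\pi_0(f)$ is an isomorphism of filtered simplicial sets, since by construction $s\pi_0(X)(\Delta^J)=\pi_0(\Map(\Delta^J,X))$. For the higher groups, the key observation is that a vertex of $\Map(\Delta^J,X)$ is, by definition, a morphism $\phi\colon\Delta^J\to X$, which is exactly a pointing $\phi\colon V\to X$ with $V=\Delta^J$ in the sense of Definition \ref{PointingSimplicialSets}. Thus every possible basepoint of $\Map(\Delta^J,X)$ is realised by some pointing, and the hypothesis that $s\pi_n(f)\colon s\pi_n(X,\phi)\to s\pi_n(Y,f\circ\phi)$ is an isomorphism of filtered simplicial sets evaluated at $\Delta^J$ gives precisely the required $\pi_n$-isomorphism at that basepoint.

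The main conceptual step is the reduction to the diagram category via Proposition \ref{FibrantWeakEquivalence}; once that is in place, the proof is a packaging exercise that matches the basepoints of $\Map(\Delta^J,X)$ with pointings of $X$ over $\Delta^J$, and then invokes the classical Whitehead theorem componentwise. The only subtlety to double-check is the basepoint-matching: one must verify that allowing general pointings $\phi\colon V\to X$ (not just $V=\Delta^J$) does not change the content of the hypothesis at level $\Delta^J$, which follows from restriction, $\phi\mapsto\phi_{|\Delta^J}$, used in Definition \ref{PointingSimplicialSets}.
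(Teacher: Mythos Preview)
Your proposal is correct and follows essentially the same route as the paper's proof: both reduce, via Proposition \ref{FibrantWeakEquivalence} and the equivalence of homotopy equivalences with weak equivalences between fibrant objects, to checking that each $\Map(\Delta^J,f)$ is a weak equivalence of Kan complexes, and then match the required basepoints with pointings $\phi\colon\Delta^J\to X$. The paper packages both directions into a single chain of if-and-only-if equivalences rather than treating the forward direction separately, but the content is the same.
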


\begin{proof}
Let $f\colon X\to Y$ be a morphism between fibrant filtered simplicial sets.
The following affirmation are equivalent by previous results
\begin{itemize}
\item $f$ is a homotopy equivalence
\item $f$ is a weak equivalence
\item $D(f)$ is a weak equivalence
\item $D(f)(\Delta^J)$ is a weak equivalence for all $\Delta^J\subseteq N(P)$.
\end{itemize}
Fix $\Delta^J\subseteq N(P)$. We know that $D(f)(\Delta^J)\colon \Map(\Delta^J,X)\to \Map(\Delta^J,Y)$ is a weak equivalence if and only if it induces isomorphisms 
\begin{equation*}
 \pi_n(D(f)(\Delta^J))\colon \pi_n(\Map(\Delta^J,X),*)\to \pi_n(\Map(\Delta^J,Y),D(f)(\Delta^J)(*)),
\end{equation*}
for all $n\geq 1$ and all $*\in \Map(\Delta^J,X)_0$, and a bijection  
\begin{equation*}
 \pi_0(D(f)(\Delta^J))\colon\pi_0(\Map(\Delta^J,X))\to\pi_0(\Map(\Delta^J,Y))
\end{equation*}
 But each vertex of $\Map(\Delta^J,X)$ corresponds to a pointing of $X$, so we get the desired equivalence.
\end{proof}

We will also need the following proposition, telling us that the filtered homotopy groups are homotopy invariants.

\begin{prop}\label{HomotopyPi}
Let $f,g\colon X\to Y$ be two homotopic morphisms between two fibrant filtered simplicial sets. Then $ s\pi_n(f)= s\pi_n(g)$ for all $n$.
\end{prop}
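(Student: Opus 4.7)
The plan is to use the simplicial structure from Section~\ref{SimplicialStructure} to transfer an elementary homotopy on $X,Y$ into a simplicial homotopy on each mapping space $\Map(\Delta^J,-)$, then invoke the classical fact that simplicially homotopic maps between Kan complexes induce the same maps on simplicial homotopy groups.

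First, by transitivity and symmetry of the homotopy relation, it is enough to treat the case where $f$ and $g$ are elementarily homotopic via some $H\colon X\otimes\Delta^1\to Y$. Because $\sS_P$ is a simplicial model category (Theorem~\ref{SimplicialModelCategory}) and each $\Delta^J$ is cofibrant, the mapping-space functor $\Map(\Delta^J,-)\colon \sS_P\to \sS$ preserves simplicial homotopies. Concretely, given an $n$-simplex $\alpha\colon \Delta^J\otimes\Delta^n\to X$ of $\Map(\Delta^J,X)$ and $\beta\colon \Delta^n\to\Delta^1$, the composite
\begin{equation*}
\Delta^J\otimes\Delta^n \xrightarrow{(\alpha,\, \operatorname{pr}_{\Delta^n}\cdot F(\beta))} X\otimes \Delta^1 \xrightarrow{H} Y
\end{equation*}
is well defined as a map in $\sS_P$ (the two filtrations agree over $N(P)$) and assembles into a map $\widetilde H_J\colon \Map(\Delta^J,X)\times \Delta^1 \to \Map(\Delta^J,Y)$ whose restrictions to $\Delta^J\otimes\{0\}$ and $\Delta^J\otimes\{1\}$ are $\Map(\Delta^J,f)$ and $\Map(\Delta^J,g)$, respectively. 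The construction is natural in $\Delta^J$, so the homotopies $\widetilde H_J$ fit together into a single natural transformation.

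Since $X$ and $Y$ are fibrant and each $\Delta^J$ is cofibrant, $\Map(\Delta^J,X)$ and $\Map(\Delta^J,Y)$ are Kan complexes. Simplicially homotopic maps between Kan complexes induce the same maps on $\pi_0$ and on $\pi_n$ (for $n\geq 1$, after identifying the two basepoints $f\circ\phi_{|\Delta^J}$ and $g\circ\phi_{|\Delta^J}$ via the path in $\Map(\Delta^J,Y)$ obtained by restricting $\widetilde H_J$ to $\phi_{|\Delta^J}\times\Delta^1$). Under this canonical identification of $s\pi_n(Y,f\circ\phi)$ with $s\pi_n(Y,g\circ\phi)$, the maps $s\pi_n(f)$ and $s\pi_n(g)$ coincide simplex-by-simplex, so as morphisms of filtered simplicial sets $s\pi_n(f)=s\pi_n(g)$, and likewise for $s\pi_0$.

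The only real subtlety is the book-keeping for basepoints in step three: one must check that the path in $\Map(\Delta^J,Y)$ obtained from $\widetilde H_J$ at the pointing varies naturally in $\Delta^J$ so that the resulting identifications on the groups $\pi_n(\Map(\Delta^J,Y),-)$ are themselves a map of filtered simplicial groups. This is immediate from the naturality of $\widetilde H_J$ in $\Delta^J$, so no new ideas beyond the simplicial structure are required.
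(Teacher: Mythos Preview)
Your proof is correct and follows essentially the same approach as the paper: construct an explicit simplicial homotopy $\Map(\Delta^J,X)\times\Delta^1\to\Map(\Delta^J,Y)$ from $H$ via the formula $(\alpha,\beta)\mapsto H\circ(\alpha,\beta\circ\pr_{\Delta^n})$, then use that homotopic maps of Kan complexes induce the same map on homotopy groups. If anything, you are more careful than the paper about the basepoint discrepancy between $f\circ\phi$ and $g\circ\phi$ and about naturality in $\Delta^J$; the paper leaves this implicit and only spells out the relevant identification afterwards in Proposition~\ref{HomotopyPointing}.
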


\begin{proof}
Let $H\colon X\otimes \Delta^{1}\to Y$ be a homotopy between $f$ and $g$. Then, applying $\Map(\Delta^J,-)$ to the diagram giving the homotopy between $f$ and $g$, we get the right part of the following diagram

\begin{equation*}
\begin{tikzcd}
\phantom{X}
&\Map(\Delta^{J},X)
\arrow[swap]{dl}{\iota_0}
\arrow{dr}{\Map(\Delta^J,\iota_0)}
\arrow[bend left=10]{drrr}{\Map(\Delta^J,f)}
&\phantom{X}
&\phantom{X}
&\phantom{X}
\\
\Map(\Delta^J,X)\times \Delta^1
\arrow{rr}{h}
&\phantom{X}
&\Map(\Delta^J,X\otimes \Delta^1)
\arrow{rr}{\Map(\Delta^J,H)}
&\phantom{X}
&\Map(\Delta^J,Y)
\\
\phantom{X}
&\Map(\Delta^J,X)
\arrow{ul}{\iota_1}
\arrow[swap]{ur}{\Map(\Delta^J,\iota_1)}
\arrow[swap, bend right=10]{urrr}{\Map(\Delta^J,g)}
\end{tikzcd}
\end{equation*}
Now the morphism $h$ can be defined on $n$-simplices by 
\begin{align*}
\Map(\Delta^J,X)_n\times \Delta^1_n &\to \Map(\Delta^J,X\otimes\Delta^1)_n\\
(\sigma\colon \Delta^J\otimes\Delta^n\to X,\tau\colon \Delta^n\to \Delta^1)&\mapsto (\sigma\otimes(\tau\circ \pr_{\Delta^n})\colon \Delta^J\otimes \Delta^n\to X\otimes \Delta^1)
\end{align*}
Where $\pr_{\Delta^n}\colon \Delta^J\otimes \Delta^n\to \Delta^n$ is the projection. The composition $\Map(\Delta^J,H)\circ h$ gives a homotopy between $\Map(\Delta^J,f)$ and $\Map(\Delta^J,g)$, so they must induce the same morphism on homotopy groups.
\end{proof}

\begin{prop}\label{HomotopyPointing}
Let $\phi,\psi\colon V\to X$ be two pointings of $X$, and $\theta\colon V\otimes \Delta^1\to X$ a homotopy between them. And let $f\colon X\to Y$ be an application between fibrant filtered simplicial sets. For all $n\geq 0$, there is a commuting square where the vertical morphisms are isomorphisms of functors
\begin{equation*}
\begin{tikzcd}
 s\pi_n(X,\phi)
\arrow{r}{ s\pi_n(f)}
\arrow{d}{ s\pi_n(X,\theta)}
& s\pi_n(Y,f\circ\phi)
\arrow{d}{ s\pi_n(Y,f\circ\theta)}
\\
 s\pi_n(X,\psi)
\arrow{r}{ s\pi_n(f)}
& s\pi_n(Y,f\circ\psi)
\end{tikzcd}
\end{equation*}
\end{prop}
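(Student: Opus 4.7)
The plan is to reduce the statement to classical change-of-basepoint in the Kan complexes $\Map(\Delta^J, X)$ and $\Map(\Delta^J, Y)$, and then check that the pointwise constructions assemble naturally in $\Delta^J$.

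First, I would observe that by Theorem \ref{SimplicialModelCategory}, since $X$ and $Y$ are fibrant filtered simplicial sets and $\Delta^J$ is cofibrant, each mapping space $\Map(\Delta^J, X)$ and $\Map(\Delta^J, Y)$ is a Kan complex. Next, for each filtered simplex $\Delta^J \subseteq V$, the restriction $\theta_{|\Delta^J}\colon \Delta^J \otimes \Delta^1 \to X$ corresponds, under the adjunction of Proposition \ref{PowerK}, to a $1$-simplex of $\Map(\Delta^J, X)$ whose $0$-face is $\phi_{|\Delta^J}$ and whose $1$-face is $\psi_{|\Delta^J}$. The classical change-of-basepoint construction in a Kan complex then produces an isomorphism $\pi_n(\Map(\Delta^J, X), \phi_{|\Delta^J}) \xrightarrow{\sim} \pi_n(\Map(\Delta^J, X), \psi_{|\Delta^J})$ for every $n \geq 1$, and a pointed bijection for $n = 0$. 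When $\Delta^J \not\subseteq V$, both sides are empty by the convention of Definition \ref{PointingSimplicialSets}, so nothing is required. This yields the candidate $s\pi_n(X, \theta)$ at each level.

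Then I would verify naturality in $\Delta^J$, which defines $s\pi_n(X,\theta)$ as a morphism of filtered simplicial sets. Given $\alpha\colon \Delta^{J'} \to \Delta^J$ in $\Delta(P)$, the pullback simplicial map $\alpha^*\colon \Map(\Delta^J, X) \to \Map(\Delta^{J'}, X)$ sends the $1$-simplex of $\theta_{|\Delta^J}$ to that of $\theta_{|\Delta^{J'}}$, since restriction of $\theta$ commutes with precomposition by $\alpha$. Because $\alpha^*$ is a simplicial map between Kan complexes, it intertwines the corresponding change-of-basepoint isomorphisms on homotopy groups. Hence the pointwise maps assemble into a morphism of presheaves on $\Delta(P)$, i.e. of filtered simplicial sets (and of filtered simplicial groups for $n \geq 1$), which is an isomorphism because it is so levelwise.

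Finally, commutativity of the square follows from the naturality of the classical change-of-basepoint isomorphism with respect to pointed simplicial maps between Kan complexes, applied to $f_*\colon \Map(\Delta^J, X) \to \Map(\Delta^J, Y)$: indeed, $f_*$ sends the $1$-simplex of $\theta_{|\Delta^J}$ to the $1$-simplex of $(f\circ \theta)_{|\Delta^J}$, so the two composites around the square both amount to conjugating the $f_*$-image of a loop by the $f_*$-image of the path. The main technical input is Theorem \ref{SimplicialModelCategory}, which guarantees the Kan property; the only potential obstacle is naturality in $\Delta^J$, and this is essentially automatic because $\theta$ is a single coherent morphism of filtered simplicial sets rather than a family of unrelated homotopies.
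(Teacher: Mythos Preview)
Your proof is correct and follows essentially the same approach as the paper: both reduce to the change-of-basepoint isomorphism in the Kan complexes $\Map(\Delta^J,X)$ and $\Map(\Delta^J,Y)$, using naturality of this construction under the simplicial map $f_*$. The only difference is presentational: the paper constructs the isomorphism explicitly as $\pi_n(\Map(\iota_1,-))\circ\pi_n(\Map(\iota_0,-))^{-1}$ via the span of trivial fibrations $\Map(\Delta^J\otimes\Delta^1,X)\rightrightarrows\Map(\Delta^J,X)$, whereas you invoke the classical change-of-basepoint along the path $\theta_{|\Delta^J}$ as a black box and then check naturality in $\Delta^J$ by hand.
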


\begin{proof} 
Taking $\Delta^J\subseteq V$ and applying $\Map(\Delta^J,-)$ to the data of Proposition \ref{HomotopyPointing}, we get the following diagram of pointed simplicial sets :
\begin{equation*}
\begin{tikzcd}[column sep=huge]
(\Map(\Delta^J,X),\phi)
\arrow{r}{\Map(\Delta^J,f)}
&(\Map(\Delta^J,Y),f\circ\phi)
\\
(\Map(\Delta^J\otimes\Delta^1,X),\theta)
\arrow{u}{\Map(\iota_0,X)}
\arrow[swap]{d}{\Map(\iota_1,X)}
\arrow{r}{\Map(\Delta^J\otimes\Delta^1,f)}
&(\Map(\Delta^J\otimes\Delta^1,Y),f\circ\theta)
\arrow[swap]{u}{\Map(\iota_0,Y)}
\arrow{d}{\Map(\iota_1,Y)}
\\
(\Map(\Delta^J,X),\psi)
\arrow{r}{\Map(\Delta^J,f)}
&\Map(\Delta^J,Y),f\circ\psi)
\end{tikzcd}
\end{equation*}
Now, vertical morphisms are all trivial fibrations. By passing to homotopy groups, we get the isomorphisms $ \pi_n(\Map(\iota_0,-))$ and $ \pi_n(\Map(\iota_1,-))$. We get the desired isomorphism by taking 
\begin{equation*}
s\pi_n(-,\theta)(\Delta^J)= \pi_n(\Map(\iota_1,-))\circ (\pi_n(\Map(\iota_0,-)))^{-1}.
\end{equation*} 
\end{proof}

\begin{remarque}\label{PointingUpToHomotopy}
Proposition \ref{HomotopyPointing} allows us to apply Theorem \ref{WhiteheadSimplicial} in a much more efficient way. Indeed, we do not have to compute the value of $ s\pi_n(f)\colon  s\pi_n(X,\phi)\to  s\pi_n(Y,f\circ\phi)$ for all pointing $\phi$, but only for one point in each homotopy class of points of $X$. More precisely, for all maximal simplices $\sigma\in  s\pi_0(X)$ choose a pointing $\phi_{\sigma}\colon \Delta^J\to X$ such that $[\phi_{\sigma}]=\sigma$.
Then, under the hypothesis of Theorem \ref{WhiteheadSimplicial}, $f$ is a homotopy equivalence if and only if the maps of filtered simplicial sets $ s\pi_n(f)\colon  s\pi_n(X,\phi_{\sigma})\to s\pi_n(Y,f\circ\phi_{\sigma})$ are isomorphisms for all $n$ and all maximal simplices $\sigma\in  s\pi_0(X)$. Furthermore, if $ s\pi_0(X)\subset N(P)$, which should be interpreted as $X$ being connected in a filtered sense, and if there exists some map $\phi_X\colon  s\pi_0(X)\to X$, then $f$ is a homotopy equivalence if and only if the maps of filtered simplicial sets $ s\pi_n(f)\colon  s\pi_n(X,\phi_X)\to  s\pi_n(Y,f\circ\phi_X)$ are isomorphisms for all $n$. 
%
\end{remarque}

\begin{remarque}
Taking $\phi=\psi$ Proposition \ref{HomotopyPointing}, tells us that $ s\pi_1(X,\phi)$ acts on $ s\pi_n(X,\phi)$.
\end{remarque}

Let us summarize what we have proved so far :

\begin{theo}\label{WhatWeHaveSoFar}
The category of filtered simplicial sets over a poset $P$ is a simplicial combinatorial model category where,
\begin{itemize}
\item the cofibrations are the monomorphisms,
\item the fibrations are the maps that satisfy the right lifting property with respect to admissible horn inclusions
\item the weak equivalences are the maps $f\colon X\to Y$ inducing isomorphisms between the filtered homotopy groups of fibrant replacements of $X$ and $Y$.
\end{itemize}
\end{theo}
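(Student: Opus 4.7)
The plan is to assemble this theorem as a direct synthesis of the results proved in Sections \ref{FilteredSimplicialSets}, \ref{SecondModelStructure}, and \ref{SimplicialStructure}; there is no new combinatorial content. I would proceed in four short steps, each citing a specific earlier result.

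First, the existence of a cofibrantly generated model structure with monomorphisms as cofibrations and the homotopy relation of Definition \ref{DefHomotopy} as the reference notion of equivalence is exactly Theorem \ref{ApplyCisinski}. To upgrade ``cofibrantly generated'' to \emph{combinatorial}, I would observe that by Proposition \ref{PresheavesCategory} the category $\sS_P$ is equivalent to $\Fun(\Delta(P)^{\op},\Set)$, a presheaf category on a small category, and is therefore locally finitely presentable; combined with Theorem \ref{ApplyCisinski} this yields the combinatorial property. The simplicial enrichment with respect to the bifunctor $\Map$ of Proposition \ref{SimplicialCategory} and the verification of the pushout-product / $SM7$ axioms were performed in Theorem \ref{SimplicialModelCategory}.

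Second, for the characterization of fibrations, Proposition \ref{CMFEgale} shows that the model structure of Theorem \ref{ApplyCisinski} coincides with the Moss-type model structure of Section \ref{SecondModelStructure}, in which fibrations are by construction the \emph{naive} fibrations, i.e., maps with the right lifting property against anodyne extensions. Since by Proposition \ref{AdmissibleHornAnodyneExt} the class of anodyne extensions is the saturated closure of admissible horn inclusions, and the RLP against a saturated class is equivalent to the RLP against any generating set, fibrations are exactly the maps having the RLP against admissible horn inclusions.

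Third, for the characterization of weak equivalences, I would reduce to the fibrant case by choosing fibrant replacements. Given $f\colon X\to Y$, pick anodyne extensions $\iota_X\colon X\hookrightarrow \Exi_P(X)$ and $\iota_Y\colon Y\hookrightarrow \Exi_P(Y)$ (Definition \ref{DefinitionEx}, Lemma \ref{ExFibrant}) together with an extension $\Exi_P(f)$ filling the naturality square. Anodyne extensions are trivial cofibrations by Proposition \ref{CMFEgale}, hence weak equivalences, so the $2$-out-of-$3$ axiom shows $f$ is a weak equivalence iff $\Exi_P(f)$ is. Between fibrant objects, weak equivalences are exactly homotopy equivalences (Lemmas \ref{LemmeCis2}, \ref{LemmeCis3}), and Theorem \ref{WhiteheadSimplicial} characterizes those precisely as the maps inducing isomorphisms of filtered simplicial sets $s\pi_0(\Exi_P(f))$ and $s\pi_n(\Exi_P(f),\phi)$ for all $n\geq 1$ and all pointings $\phi$. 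Chaining these equivalences gives the desired characterization in terms of filtered homotopy groups of fibrant replacements.

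No step is a genuine obstacle; the only point requiring mild care is the reduction to fibrant replacements in the third step, where one must invoke that $\iota_X$ and $\iota_Y$ are weak equivalences (via Proposition \ref{CMFEgale}) before applying $2$-out-of-$3$ and then Theorem \ref{WhiteheadSimplicial}. Everything else is a direct citation.
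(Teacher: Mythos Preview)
Your proposal is correct and matches the paper's intent: Theorem \ref{WhatWeHaveSoFar} is presented in the paper as a summary statement (``Let us summarize what we have proved so far'') with no separate proof, and your assembly from Theorems \ref{ApplyCisinski}, \ref{SimplicialModelCategory}, Proposition \ref{CMFEgale}, and Theorem \ref{WhiteheadSimplicial} is exactly the intended synthesis. One cosmetic remark: $\Exi_P$ is a functor, so $\Exi_P(f)$ and the commuting naturality square come for free rather than requiring a choice of extension.
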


\section{Filtered topological spaces}\label{FilteredTopologicalSpaces}

In this section, we define a category of topological spaces filtered over $P$. We then use Theorem \ref{WhiteheadSimplicial} to prove a filtered version of Whitehead's Theorem : Theorem \ref{Whitehead}.

\subsection{The simplicial category of filtered topological spaces}

\begin{defin}[Filtered topological spaces]
Let $P$ be a partially ordered set. We will consider it as a topological space with the Alexandrov topology.
We define the category of filtered topogical spaces as follows 
\begin{itemize}
\item The object are the pairs $(A,\varphi)$ where $A$ is a topological space, and $\varphi\colon A\to P$ is a continuous map.
\item The maps from $(A,\varphi_A)$ to $(B,\varphi_B)$ are continuous maps $f\colon A\to B$ such that the following diagram commutes 
\begin{equation*}
\begin{tikzcd}
A
\arrow{rr}{f}
\arrow[swap]{dr}{\varphi_A}
&\phantom{X}
&B
\arrow{dl}{\varphi_B}
\\
\phantom{X}
&P
&\phantom{X}
\end{tikzcd}
\end{equation*}
\item The composition and identity are those of topological spaces and continous maps.
\end{itemize}
\end{defin}

Recall the definition of the realisation for simplicial sets 

\begin{defin}
Let $K$ be a simplicial set, we define the geometric realisation of $K$ as the quotient 
\begin{equation*}
\Real{K}=\left(\coprod_n K_n\times \Real{\Delta^n}\right)/\sim
\end{equation*}
where the relation is generated by $(\sigma,S^{i}(t))\sim(s_i(\sigma),t)$ and $(\sigma,D^{i}(t))\sim(d_i(\sigma),t)$.
\end{defin}

We define the application $\varphi_P\colon \Real{N(P)}\to P$ 
by $\varphi_P([p_0,\dots,p_n],t)=p_n$ when $t$ is in the interior of $[p_0,\dots,p_n]$, and $\varphi_P(\{p\})=p$.

\begin{defin}[Filtered geometric realization]\label{DefinitionFilteredRealisation}
Let $X\xrightarrow{\pi} N(P)$ be a filtered simplicial set. We define its filtered geometric realisation over $P$ to be the filtered topological space $\Real{X}\xrightarrow{\varphi_P\circ\Real{\pi}} P$.
This definition extends to a functor 
\begin{equation*}
\Real{-}_P\colon \sS_P\to \Top_P
\end{equation*}
\end{defin}

\begin{remarque}
Notice that one can extend the definition of $\Er$ and $\Pr$ from Proposition \ref{ForgetFree}, to the category of filtered topological spaces. Doing so, one gets a pair of adjoint functors 
\begin{align*}
\Er\colon \Top_P&\to \Top\colon \Pr\\
(A\to P)&\mapsto A\\
(M\times P\to P)&\mapsfrom M
\end{align*}
One also checks that $\Real{\Er(X)}_P\simeq \Er(\Real{X}_P)$, that is why we use the same notation for the two functors.
\end{remarque}

\begin{defin}[Simplicial structure on $\Top_P$]
We define the following functors :
\begin{align*}
-\otimes-\colon \Top_P\times \sS &\to \Top_P\\
(A\to P,K)&\mapsto A\times \Real{K}\to P\\
\phantom{X}&\phantom{X}\\
\Map\colon\Top_P^{\op}\times\Top_P&\to \sS\\
(A,B)&\mapsto \left\{\begin{array}{rcl}
\Map(A,B)\colon\Delta^{\op} &\to &\Set\\
\Delta^n&\mapsto\ &\Hom_{\Top_P}(A\otimes \Delta^n,B)
\end{array}\right.
\end{align*}
In addition, let $K$ be a simplicial set, and $p\in P$. Let $K_p$ be the filtered simplicial set $K\to P$ whose image in $P$ is exactly the vertex $p$. Let $B$ be a filtered topological space, we define $B^{K}$ as follows.
As a topological space :
\begin{equation*}
B^K= \coprod_{p\in P}\Hom_{\Top_P}(\Real{K_p}_P,B)\subseteq \Hom_{\Top}(\Real{K},B).
\end{equation*}
It inherits the topology of the inner $\Hom$ of topological spaces. It is also naturally filtered with the filtration sending any morphism in $\Hom_{\Top_p}(\Real{K_p}_P,B)$ to $p$.
This defines a functor 

\begin{equation*}
(-)^-\colon \sS^{\op}\times \Top_P\to \Top_P
\end{equation*}
\end{defin}

\begin{prop}
With the functor defined above, the category $\Top_P$ of filtered topological spaces over the poset $P$ is a simplicial category.
\end{prop}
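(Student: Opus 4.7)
The plan is to mirror the proof of Proposition \ref{SimplicialCategory} and apply \cite[Lemma II.2.3]{GoerssJardine} to the data $(-\otimes-, \Map, (-)^-)$ just defined on $\Top_P$. This reduces the claim to verifying the same three conditions as in the simplicial case, transposed to the topological setting.

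First, I would check that for any simplicial set $K$, the functor $-\otimes K\colon \Top_P\to \Top_P$ is left adjoint to $(-)^K\colon \Top_P\to \Top_P$. Given $A\to P$ and $B\to P$, a morphism $f\colon A\otimes K\to B$ in $\Top_P$ is a continuous map $A\times\Real{K}\to B$ whose composition with $B\to P$ equals $\varphi_A\circ \pr_A$; hence for each $a\in A$ its restriction to $\{a\}\times \Real{K}$ takes values in the fiber over $\varphi_A(a)$, i.e.\ factors through $\Real{K_{\varphi_A(a)}}_P\to B$, which is precisely an element of the $\varphi_A(a)$-summand of $B^K$. The standard exponential adjunction of topological spaces (in whatever convenient category of spaces the paper implicitly fixes, e.g.\ compactly generated) then assembles these into a continuous filtered map $A\to B^K$, and this assignment is natural and bijective.

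Second, I would show that $A\otimes -\colon \sS\to \Top_P$ commutes with colimits and satisfies $A\otimes \Delta^0\simeq A$. The latter is immediate since $\Real{\Delta^0}$ is a point and $A\times \{\ast\}\to P$ equals $\varphi_A$. For the former, $\Real{-}$ is a left adjoint hence preserves colimits, and in the ambient convenient category of topological spaces the functor $A\times -$ also preserves colimits; compositing these and using that the filtration on $A\times\Real{-}$ factors through $\pr_A$ yields the desired preservation of colimits in $\Top_P$.

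Third, I would verify the natural isomorphism $(A\otimes K)\otimes L\simeq A\otimes (K\times L)$ for simplicial sets $K,L$ and $A\in \Top_P$. Unpacking, the two sides are $(A\times\Real{K})\times\Real{L}$ and $A\times\Real{K\times L}$, both filtered via projection to $A$ followed by $\varphi_A$. The associativity of $\times$ together with the natural homeomorphism $\Real{K\times L}\simeq \Real{K}\times \Real{L}$ (again in the convenient category of spaces) gives the claimed isomorphism, which is manifestly natural in all three variables and compatible with the filtrations. The only potential obstacle is ensuring the ambient topological category is nice enough for $\Real{K\times L}\simeq \Real{K}\times \Real{L}$ and for $A\times -$ to preserve colimits; this is the standard reason one restricts to compactly generated spaces, and I would simply fix that convention at the start. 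With the three conditions established, \cite[Lemma II.2.3]{GoerssJardine} yields the simplicial category structure on $\Top_P$.
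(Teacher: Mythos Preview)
Your proposal is correct and follows essentially the same approach as the paper: both invoke \cite[Lemma II.2.3]{GoerssJardine} and mirror the proof of Proposition \ref{SimplicialCategory}. The paper is terser, declaring that the only non-obvious point is the adjunction $-\otimes K \dashv (-)^K$ and writing out the explicit bijections for it, whereas you spell out all three conditions; your remark about needing a convenient category of spaces for $\Real{K\times L}\simeq \Real{K}\times\Real{L}$ and for $A\times -$ to preserve colimits is a legitimate caveat the paper leaves implicit.
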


\begin{proof}
The proof goes as for Proposition \ref{SimplicialCategory} using \cite[Lemma II.2.3]{GoerssJardine}. The only non-obvious fact is that given a simplicial set $K$ the functor $(-)^K$ is right adjoint to the functor $-\otimes K$.
Let $A$ and $B$ be two filtered simplicial set. The bijections are given by 
\begin{align*}
\Hom(A\otimes K,B)&\simeq \Hom(A,B^K)\\
(f\colon A\times \Real{K}\to B)&\mapsto \left\{
\begin{array}{rcl}
A & \to &B^K\\
a &\mapsto & (k\mapsto f(a,k))
\end{array}\right.\\
\left.
\begin{array}{rcl}
A\times \Real{K}&\to& B\\
(a,k)&\mapsto& g(a)(k)
\end{array}\right\}
&\mapsfrom (g\colon A\to B^K)
\end{align*}
\end{proof}

There is now a canonical way to define filtered homotopies between filtered spaces.

\begin{defin}
Let $f,g\colon A\to B$ be two filtered maps. We say that $f$ and $g$ are filtered homotopic (written $f\sim g$) if there exists a filtered map $H\colon A\otimes \Delta^1\to B$ such that $H\circ (\Id_A\otimes\iota_0)=f$ and $H\circ(\Id_B\otimes \iota_1)=g$, where $\iota_{\epsilon}$ is the map sending $\Delta^0$ to the corresponding vertex of $\Delta^1$.
If $f\colon A\to B$ is a filtered map, we say that $f$ is a filtered homotopy equivalence, and that $A$ and $B$ are filtere homotopy equivalent if there exists $g\colon B\to A$ such that $f\circ g\sim \Id_B$ and $g\circ f\sim \Id_A$. 
\end{defin}

We will also need the following functor

\begin{defin}[Filtered singular simplices]
Let $A$ be a filtered topological space over $P$. We define the filtered simplicial set of its filtered singular simplices $\Sing_P(A)$ by
\begin{equation*}
\Sing_P(A)_J=\Hom_{\Top_P}(\Real{\Delta^J}_P,A),
\end{equation*}
where faces and degeneracies are taken as in the non filtered case.
\end{defin}

\begin{prop}\label{SimplicialAdjunction}
There is an adjunction of simplicial categories. 
\begin{equation*}
\Real{-}_P\colon \sS_P\leftrightarrow\Top_P\colon \Sing_P
\end{equation*}
\end{prop}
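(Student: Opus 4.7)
The plan is to establish this in two stages: first verify the ordinary adjunction $(\Real{-}_P, \Sing_P)$ of underlying categories, then upgrade to a simplicial adjunction by checking compatibility with the $\otimes$-tensoring structures on both sides.

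For the ordinary adjunction, I would start by noting that any filtered simplicial set is a canonical colimit of filtered simplices, $X \simeq \colim_{\Delta^J \subseteq X} \Delta^J$, and that $\Real{-}_P$ is a left adjoint candidate, hence should preserve colimits (one verifies this directly from the quotient construction, observing that the filtration $\varphi_P \circ \Real{\pi}$ is determined levelwise on simplices). It therefore suffices to produce a natural bijection
\[
\Hom_{\Top_P}(\Real{\Delta^J}_P, A) \simeq \Hom_{\sS_P}(\Delta^J, \Sing_P(A)),
\]
but both sides equal $\Sing_P(A)_J$ by the very definition of $\Sing_P$. Extending by colimits in $X$ gives the natural bijection $\Hom_{\Top_P}(\Real{X}_P,A) \simeq \Hom_{\sS_P}(X, \Sing_P(A))$.

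The key intermediate step for the simplicial upgrade is the natural filtered homeomorphism
\[
\Real{X \otimes K}_P \;\simeq\; \Real{X}_P \otimes K,
\]
for $X \in \sS_P$ and $K \in \sS$. Unwinding the definitions, $X \otimes K = X \times_{N(P)} F(K)$, whose underlying simplicial set is $X \times K$ because the projection $F(K) = K \times N(P) \to N(P)$ is split. Applying the classical fact that geometric realization commutes with finite products yields a homeomorphism of total spaces $\Real{X \otimes K} \simeq \Real{X} \times \Real{K}$, and a direct check on simplices shows that the filtration map on the left side agrees with the composite $\Real{X} \times \Real{K} \xrightarrow{\pr} \Real{X} \to P$ defining the filtration on the right side.

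Granting this, I compute the $n$-simplices of the two mapping simplicial sets:
\[
\Map_{\Top_P}(\Real{X}_P, A)_n = \Hom_{\Top_P}(\Real{X}_P \otimes \Delta^n, A) \simeq \Hom_{\Top_P}(\Real{X \otimes \Delta^n}_P, A),
\]
and by the ordinary adjunction this is
\[
\Hom_{\sS_P}(X \otimes \Delta^n, \Sing_P(A)) = \Map_{\sS_P}(X, \Sing_P(A))_n.
\]
Naturality in $X, A$ and compatibility with face/degeneracy operators follow from functoriality of $\otimes$, $\Real{-}_P$ and $\Sing_P$, giving the simplicial adjunction. The main technical obstacle is the verification of the filtered homeomorphism $\Real{X \otimes K}_P \simeq \Real{X}_P \otimes K$, which rests on the compatibility of realization with finite products together with a careful bookkeeping of the filtrations on both sides; every other step is formal once this identification is in place.
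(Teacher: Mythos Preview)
Your proof is correct and follows essentially the same approach as the paper: the same chain of isomorphisms $\Map(\Real{X}_P,A)_n \simeq \Hom_{\Top_P}(\Real{X\otimes\Delta^n}_P,A) \simeq \Hom_{\sS_P}(X\otimes\Delta^n,\Sing_P(A)) = \Map(X,\Sing_P(A))_n$ is used for the simplicial upgrade, with the identification $\Real{X\otimes K}_P \simeq \Real{X}_P\otimes K$ as the key ingredient. The only minor difference is that for the underlying ordinary adjunction the paper writes down the explicit mutually inverse bijections and checks by hand that they land in filtered maps, whereas you reduce to representables via Yoneda and extend by colimits; these are equivalent and routine variations.
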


\begin{proof}
Let $X$ be a filtered simplical set and $A$ a filtered topological space. The bijections are as follows
\begin{align*}
\Hom_{\Top_P}(\Real{X}_P,A)&\simeq \Hom_{\sS_P}(X,\Sing_P(A))\\
(f\colon \Real{X}\to A)&\mapsto \left\{ \begin{array}{rcl}
X & \to & \Sing_P(A)\\
\sigma\colon \Delta^J\to X & \mapsto & f\circ\Real{\sigma}_P\colon \Real{\Delta^J}_P\to A
\end{array}\right.\\
\left.\begin{array}{rcl}
\widehat{g}\colon\Real{X}&\to &A\\
(\sigma,t)&\mapsto &g(\sigma)(t)
\end{array}
\right\}
&\mapsfrom (g\colon X\to \Sing_P(A))
\end{align*}
Since those applications are the same as in the non filtered case, it is immediate that they form a bijection. What needs to be checked is that they are well defined. It is clear that $f\circ\Real{\sigma}_P$ is a filtered application since it is the composite of two filtered applications. Now, to see that $\widehat{g}$ is filtered, consider the following diagram.
\begin{equation*}
\begin{tikzcd}
\Real{X}
\arrow{r}{\widehat{g}}
&A
\\
\Real{\Delta^J}
\arrow{u}{\Real{\sigma}}
\arrow[swap]{ur}{g(\sigma)}
&\phantom{X}
\end{tikzcd}
\end{equation*}
we know that $\Real{\sigma}$ and $g(\sigma)$ are filtered, so the restriction of $\widehat{g}$ to the image of $\Real{\sigma}$ is necessarily filtered. But then, $\widehat{g}$ is filtered because it is on every simplex of the realisation.

We then get the simplicial adjunction by noticing the following 
\begin{align*}
\Map(\Real{X}_P,A)_n &\simeq \Hom_{\Top_P}(\Real{X}_P\times \Real{\Delta^n},A)\\
&\simeq \Hom_{\Top_P}(\Real{X\otimes\Delta^n}_P,A)\\
&\simeq \Hom_{\sS_P}(X\otimes\Delta^n,\Sing_P(A))\\
&\simeq \Map(X,\Sing_P(A))_n
\end{align*}
\end{proof}

\subsection{Fibrant objects}\label{FibrantObjects}

Contrary to the classical case, it is not always true that $\Sing_P(A)$ is fibrant, see Example \ref{ExampleNonFibrantSpaces}. The object of this subsection is to show that there is a large class of filtered spaces, $A$, such that $\Sing_P(A)$ is fibrant, and that useful examples are in this class. To do so we will need the definition of conically stratified spaces from \cite[Definition A.5.5] {HigherAlgebra}.

\begin{defin}
Let $A$ be a filtered space, and $a\in A$ be a point. Write $p=\pi(a)$. We say that $A$ is conically stratified at the point $a$ if there exists $B$ a filtered space over $P_{>p}$, and a space $C$ such that there is a filtered open embedding $C\times c(B)\hookrightarrow A$ whose image contains $a$.
Here, $P_{>p}$ is the subposet of $P$ whose elements are all strictly greater than $p$, $c(B)$ is the cone on $B$, and the filtration on $C\times c(B)$ is given by $\varphi(c,0)=p$ and $\varphi(c,(b,t))=\varphi(b)$ for $t>0$. The filtered space $A$ is said to be conically stratified if it is conically stratified at every point.
\end{defin}

\begin{remarque}
In particular (PL)-pseudomanifolds, in the original sense of Goresky and MacPherson \cite{IntersectionHomologyI}, are conically stratified.
\end{remarque}

In this subsection, we prove the following :

\begin{prop}\label{ConStratFibrant}
Any conically stratified space is fibrant.
\end{prop}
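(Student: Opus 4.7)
The plan is to use the adjunction from Proposition \ref{SimplicialAdjunction} to translate the fibrancy statement into a topological extension problem: showing that $\Sing_P(A)\to N(P)$ has the right lifting property against every admissible horn inclusion $\Lambda^J_k\hookrightarrow \Delta^J$ is equivalent, under $\Real{-}_P\dashv \Sing_P$, to showing that every filtered continuous map $\sigma\colon\Real{\Lambda^J_k}_P\to A$ admits a filtered continuous extension $\widetilde{\sigma}\colon \Real{\Delta^J}_P\to A$. This is the only reformulation I would do before attacking the geometric content.

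After this reduction, I would exploit the admissibility assumption. Writing $\Delta^J=[q_0,\dots,q_N]$ with (without loss of generality) $q_k=q_{k+1}=p$, the geometric realisation admits a cone description with apex $e_{k+1}$: every point has the form $(1-t)e_{k+1}+ty$ for $y\in \Real{d_{k+1}\Delta^J}_P$, and the base $d_{k+1}\Delta^J$ is one of the faces of $\Lambda^J_k$. The image $\sigma(e_{k+1})$ lies in the stratum $A_p$, so the conical hypothesis supplies a filtered open embedding $C\times c(B)\hookrightarrow A$ whose image contains $\sigma(e_{k+1})$, with $B$ a filtered space over $P_{>p}$. By continuity of $\sigma$ and compactness of the relevant portion of the horn, there is an $\varepsilon\in (0,1]$ such that the cone-neighbourhood $V_\varepsilon=\{(1-t)e_{k+1}+ty\mid t\leq \varepsilon\}$ satisfies $\sigma(V_\varepsilon\cap\Real{\Lambda^J_k}_P)\subseteq C\times c(B)$.

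On $V_\varepsilon$ I would build the extension by rescaling the cone coordinate in $c(B)$ by $t/\varepsilon$: the boundary $t=\varepsilon$ is matched with $\sigma$, and the apex $t=0$ is sent to $\sigma(e_{k+1})$. Outside $V_\varepsilon$, I would recognise the exterior shell as admitting a filtered strong deformation retraction onto $\Real{\Lambda^J_k}_P\setminus\mathrm{int}(V_{\varepsilon/2})$, the existence of which rests precisely on the duplicated-colour vertex $e_{k+1}$, in the spirit of Proposition \ref{AdmissibleHornHomotopyEquivalence}. Composing $\sigma$ with this retraction gives the extension on the shell, and interpolating in $t$ (for instance through the open collar $\varepsilon/2\leq t\leq \varepsilon$) glues the two pieces continuously while preserving the filtration.

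The main obstacle is the filtered deformation retraction on the exterior shell, since the image of $\sigma$ will in general meet strata deeper than $p$, where a naive linear retraction away from the apex is not filtration-preserving. The natural remedy is an induction on the set of strata of $A$ touched by the image of the horn: the conical chart at $\sigma(e_{k+1})$ trades the problem on $A$ for a problem involving a filtered space over the smaller poset $P_{>p}$, via the link $B$, and an inductive hypothesis applied in that smaller setting provides the needed retraction on the exterior. Checking compatibility of this inductive extension with the local cone-coordinate extension across $t=\varepsilon$, and verifying that all constructions depend only on finitely many conical charts (so compactness of $\Real{\Delta^J}_P$ can be invoked), will be the technically delicate steps.
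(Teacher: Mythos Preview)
Your approach is a direct geometric construction of the horn-filler, and it diverges sharply from the paper's proof. The paper does not build the extension by hand at all: it invokes Lurie's result \cite[Theorem A.6.4]{HigherAlgebra}, which says that for $A$ conically stratified the underlying simplicial set $U(\Sing_P(A))$ is a quasi-category. This immediately gives lifts against all \emph{inner} horns $\Lambda^J_k\hookrightarrow\Delta^J$ with $0<k<N(J)$, regardless of admissibility. The only remaining admissible horns are the outer ones ($k=0$ or $k=N(J)$) and the one-dimensional case; for $k=0$ admissibility forces $q_0=q_1$, so the linear automorphism of $\Real{\Delta^J}_P$ swapping $e_0$ and $e_1$ is filtered and carries $\Lambda^J_0$ to the inner horn $\Lambda^J_1$, reducing to the case already handled. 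The whole argument is three short paragraphs once Lurie's theorem is on the table.

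By contrast, your proposal is essentially an attempt to reprove Lurie's Theorem A.6.4 from scratch, and the part you flag as ``the main obstacle'' --- the filtered deformation retraction on the exterior shell --- is exactly where the difficulty of that theorem lives. Your sketch of the inductive remedy (pass to the link $B$ over the smaller poset $P_{>p}$) is on the right track morally, but as written it is not a proof: you have not specified the induction hypothesis, you have not shown that the rescaled extension on $V_\varepsilon$ agrees with $\sigma$ on $V_\varepsilon\cap\Real{\Lambda^J_k}_P$ (note that $e_{k+1}$ lies in the horn and $\sigma$ is already defined on a neighbourhood of it in $\Real{\Lambda^J_k}_P$, so a naive cone-coordinate rescaling will alter prescribed values), and the gluing across the collar $\varepsilon/2\le t\le \varepsilon$ needs an explicit filtered interpolation, not just a gesture toward one. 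None of these gaps is fatal in principle, but closing them is real work --- work that the paper outsources to \cite{HigherAlgebra}.
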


Where fibrant is meant in the following sense.

\begin{defin}
A filtered topological space $A$ is said to be fibrant if $\Sing_P(A)$ is fibrant.
Equivalently, by adjunction, $A$ is fibrant if and only if for any diagram of the form 
\begin{equation*}
\begin{tikzcd}
\Real{\Lambda^J_{k}}_P
\arrow{r}
\arrow{d}
&A
\\
\Real{\Delta^J}_P
\arrow[dashrightarrow]{ur}
\end{tikzcd}
\end{equation*}
where the morphism $\Real{\Lambda^J_{k}}_P\to \Real{\Delta^J}_P$ is the realisation of an admissible horn inclusion, a dotted arrow exists, making the diagram commute.
\end{defin}

\begin{exemple}\label{ExampleNonFibrantSpaces}
Let us look at a few example of filtered spaces. Let $P=\{p_0<p_1\}$ be the linear poset with $2$ elements, and consider $\Delta^J=[p_0,p_0,p_1]$ and $\Lambda^J_1=[p_0,\widetilde{p_0},p_1]$. Then $\RealP{\Lambda^J_1}$ is not a fibrant filtered space, since there exist no filtered lift in the following diagram 
\begin{equation*}
\begin{tikzcd}
\Real{\Lambda^{J}_{1}}_P
\arrow{d}
\arrow{r}{\Id}
&\Real{\Lambda^{J}_{1}}_P
\\
\Real{\Delta^{J}}_P
\end{tikzcd}
\end{equation*}
But $\RealP{\Delta^J}$ will be a fibrant filtered space, as a consequence of Proposition \ref{ConStratFibrant}. We provide a picture of the two spaces in this example in Figure \ref{PictureSpaces}.
\end{exemple}

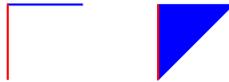
\begin{figure}[h]
\begin{tikzpicture}
\draw[blue, thick,opacity=0.5](0,1)--(1,1);
\draw[red, thick](0,0)--(0,1.014);

\filldraw[blue,blue,opacity=0.5](2,1)--(3,1)--(2,0)--(2,1);
\draw[red,thick](2,0)--(2,1.005);
\end{tikzpicture}
\caption{The non-fibrant space $\Real{[p_0,\widetilde{p_0},p_1}_P$ and the fibrant space $\Real{[p_0,p_0,p_1]}_P$.}
\label{PictureSpaces}
\end{figure}

\begin{proof}[Proof of Proposition \ref{ConStratFibrant}]
By \cite[Theorem A.6.4]{HigherAlgebra}, if $A$ is a conically stratified space, $\Er(\Sing_P(A))$ is a quasi-category. In particular, for any diagram of the form 
\begin{equation*}
\begin{tikzcd}
\Real{\Lambda^J_{k}}_P
\arrow{r}{\alpha}
\arrow{d}
&A
\\
\Real{\Delta^J}_P
\arrow[swap, dashrightarrow]{ur}{f}
\end{tikzcd}
\end{equation*}
There exists a lift as long as $N(J)\geq 2$ and $0<k<N(J)$. We prove that a lift exists in the remaining cases, when $\Lambda^J_k\to \Delta^J$ is admissible.

If $N(J)=1$, then since $\Lambda^J_k\to \Delta^J$ is admissible, we must have $\Delta^J=[p,p]$ for some $p\in P$. Then, $\Lambda^J_k=[p]$ and there is a (unique) section $s\colon \RealP{\Delta^J}\to \RealP{\Lambda^J_k}$. Taking $f=\alpha\circ s$ gives the desired lift. 

If $N(J)\geq 2$ and $k=0$, since $\Lambda^J_k\to \Delta^J$ is admissible, the $1$-st vertex of $\Delta^J$ must share its color with the $0$-th. In particular, there is a linear filtered homeomorphism $\phi\colon \RealP{\Delta^J}\to \RealP{\Delta^J}$ exchanging the $0$-th and $1$-st vertex of $\Delta^J$ and leaving the other vertices fixed. This homeomorphism sends $\Lambda^J_0$ to $\Lambda^J_1$. But now, $\Lambda^J_1$ is an inner horn, and so there exists a lift $f_1$ in the following diagram 
\begin{equation*}
\begin{tikzcd}
\Real{\Lambda^J_{1}}_P
\arrow{r}{\alpha\circ \phi^{-1}}
\arrow{d}
&A
\\
\Real{\Delta^J}_P
\arrow[swap, dashrightarrow]{ur}{f_1}
\end{tikzcd}
\end{equation*}
Taking $f=f_1\circ\phi$ gives the desired lift.

The case $k=N(J)$ is identical to the case $k=0$.
\end{proof}

\subsection{Homotopy groups}

As in the simplicial case, we can define a diagram functor 

\begin{defin}
We define the following functor
\begin{align*}
D^{\Top}\colon \Delta(P)^{\op}\times \Top_P&\to \sS\\
(\Delta^J,A)&\mapsto\Map(\Real{\Delta^J}_P,A)
\end{align*}
\end{defin}

\begin{remarque}\label{IsomorphismOfDiagram}
The simplicial adjunction of Proposition \ref{SimplicialAdjunction} gives us that the functor $D^{\Top}$ and $D(\Sing_P(-))$ are isomorphic. For this reason, the superscript $\Top$ will be omitted.
\end{remarque}

In order to define filtered homotopy groups, we need to define pointings of a filtered topological space. We will proceed in the same way as we did for filtered simplicial sets, see Definition \ref{PointingSimplicialSets}.

\begin{defin}[Pointing]\label{PointingTopologicalSpaces}
Let $A$ be a filtered topological space. Let $V\subset N(P)$ be any inclusion of filtered simplicial set. A pointing of $A$ over $V$ is a filtered map $\phi\colon \Real{V}_P\to A$.
Given such a pointing and a subset $V'\subset V$ we get an  induced pointing over $V'$, $\phi_{|V'}\colon \Real{V'}_P\to A$. A pointing of $A$ is the choice of some $V\subseteq N(P)$ and some morphism $\phi\colon \Real{V}_P\to A$, the filtered simplicial set $V$ will often be left implicit.

Let $A$ be a filtered topological space, and $\phi\colon \Real{V}_P\to A$ a pointing of $A$. For all $\Delta^J\subseteq V$, we get a pointing of $\Map(\Real{\Delta^J}_P,A)$ by taking $\phi_{|\Delta^J}$ as a base-point. We write $(\Map(\Real{\Delta^J}_P,A),\phi)$ for the corresponding pointed simplicial set with the convention that $(\Map(\Real{\Delta^J}_P,A),\phi)=\emptyset$ if $\Delta^J\not \subset V$.
\end{defin}

\begin{defin}[Filtered Homotopy Groups]\label{TopFilteredHomotopyGroup}
Let $A$ be a filtered space, and $\phi\colon \Real{V}_P\to A$ be a pointing of $A$. We define the functor of filtered connected components of $A$ to be the functor
\begin{align*}
 s\pi_0(A) \colon \Delta(P)^{\op}&\to \Set\\
\Delta^J&\mapsto  \pi_0(\Map(\Real{\Delta^J}_P,A)).
\end{align*} 
We can recover a functor which takes values in pointed sets, $ s\pi_0(A,\phi)$ by taking $s\pi_0(A,\phi)(\Delta^J)=\pi_0(\Map(\Real{\Delta^J}_P,A),\phi)$.
Let $n$ be a positive integer.
We define the $n$-th homotopy group of $A$ with base point $\phi$ to be the functor :
\begin{align*}
 s\pi_n(A,\phi)\colon \Delta(P)^{\op}& \to \Grp\\
\Delta^J&\mapsto  \pi_n(\Map(\Real{\Delta^J}_P,A),\phi)
\end{align*}
\end{defin}

\begin{exemple}
Consider the two stratified spaces of Figure \ref{FigureExamplePi0}, stratified over $P=\{p_0<p_1\}$. The first, $A$, is the pinched torus, and the second, $B$ is the gluing of two spheres along their north and south poles.  Their $s\pi_0$ is represented in Figure \ref{FigureExamplePi0}. We see that $s\pi_0(A)$ has one vertex corresponding to the regular stratum and one vertex corresponding to the singular strata, but two distinct one simplices between those vertices, corresponding to the two paths (up to filtered homotopy) going from the singular stratum to the regular stratum. Indeed, a path going from the singular part into the regular part leaves the singular part going either to the right or to the left. On the contrary, $s\pi_0(B)$ has two vertices for singular strata and two vertices for regular strata, but a vertex corresponding to a singular stratum is connected by a single $1$ simplex to a vertex corresponding to a regular stratum. Indeed, when one chooses one singular stratum and one regular stratum in $B$ there is only one path going from the former to the latter up to filtered homotopy.

In this simple example, higher filtered homotopy groups can also be computed. Choose pointings for $A$ and $B$, $\phi\colon \RealP{[p_0,p_1]}\to A$ and $\psi \colon \RealP{[p_0,p_1]}\to B$.
To compute $s\pi_n(A,\phi)$ (resp. $s\pi_n(B,\psi)$) we need to compute $\pi_n(\Map(\Delta^J,A),\phi)$  (resp. $\pi_n(\Map(\Delta^J,B),\psi)$).
It is enough to compute those for $\Delta^J=[p_0],[p_1]$ and $[p_0,p_1]$ (see Remark \ref{SkeletonHomotopyGroups}).
For $A$ and $n=1$, we have $(\Map(\RealP{[p_0]},A),\phi)\sim (*,*)$ and so $s\pi_1(A,\phi)([p_0])=0$,
 and $(\Map(\RealP{[p_0,p_1]},A),\phi)\sim (S^1\coprod S^1,*)$ and $(\Map(\RealP{[p_1]}),\phi) \sim (S^1,*)$. Furthermore, one can check that the map 
 \begin{equation*}
(\Map(\RealP{[p_0,p_1]},A),\phi) \to (\Map(\RealP{[p_1]},A),\phi)
\end{equation*}
 induces an iso on the first homotopy group. This implies that $s\pi_1(A,\phi)$ (restricted to the non-degenerate simplices of $\Delta(P)$), is given by the following diagram 
\begin{equation*}
\begin{tikzcd}
\phantom{X}
&\mathbb{Z}
\arrow{dr}{\Id}
\arrow{dl}
&\phantom{X}
\\
0
&\phantom{X}
&\mathbb{Z}
\end{tikzcd}
\end{equation*}
With $s\pi_1(A,\phi)([p_0])$ on the left, $s\pi_1(A,\phi)([p_1])$ on the right, and $s\pi_1(A,\phi)([p_0,p_1])$ on top. Furthermore, we have that $s\pi_n(A,\phi)$ is trivial for all $n>1$, that is, $s\pi_n(A,\phi)$ is the constant functor equal to $0$. Computing the higher filtered homotopy groups of $B$ gives the same result, which means that $A$ and $B$ are distinguished only by their $s\pi_0$.
\end{exemple}
\begin{figure}[h]
\includegraphics[width= 200pt]{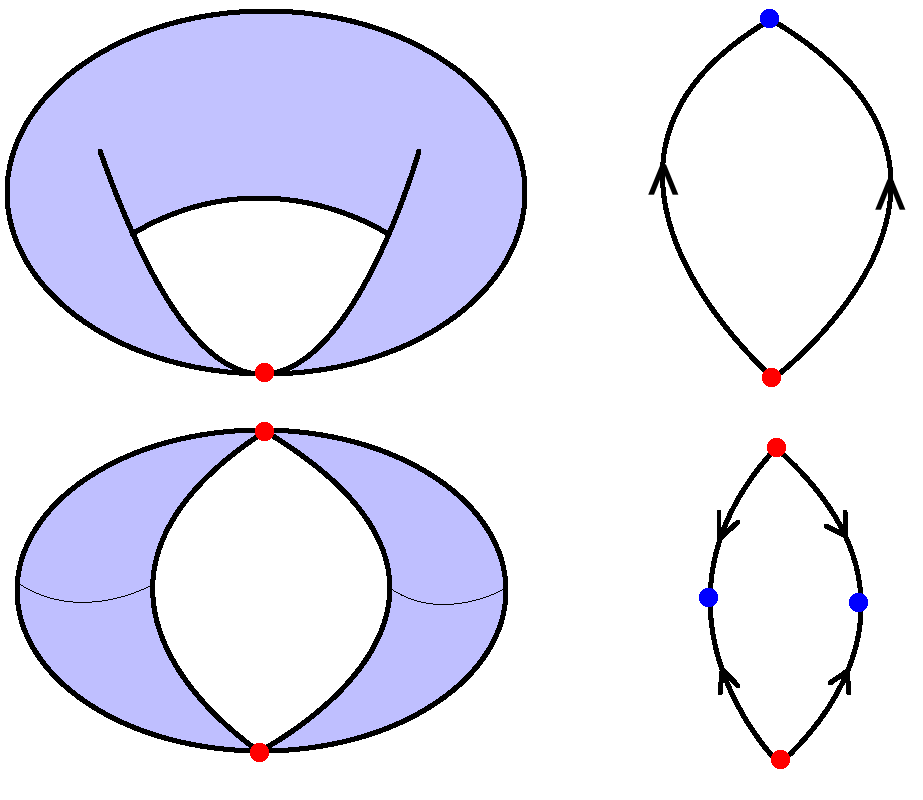}
\caption{The stratified spaces $A$ and $B$ and their associated filtered simplicial set of connected components, $s\pi_0(A)$ and $s\pi_0(B)$.}
\label{FigureExamplePi0}
\end{figure}

\begin{remarque}\label{IsomorphismOfPi}
Let $(A,\phi)$ be a pointed fibrant filtered space, we get from remark \ref{IsomorphismOfDiagram} that $ s\pi_n(A,\phi)\simeq  s\pi_n(\Sing_P(A),\phi')$ where $\phi'$ is given by the adjunction.
\end{remarque}

\begin{prop}\label{HomotopyGroupInvariant}
Let $f,g\colon A\to B$ be two homotopic maps between filtered spaces, then, for all $n\geq 0$, $ s\pi_n(f)= s\pi_n(g)$.
\end{prop}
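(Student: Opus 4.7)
The plan is to imitate the proof of Proposition \ref{HomotopyPi}, only now in the topological setting. Given a filtered homotopy $H\colon A\otimes \Delta^1\to B$ from $f$ to $g$, fix a filtered simplex $\Delta^J$; I will construct, by naturality of the simplicial enrichment, a commutative diagram of pointed simplicial sets
\begin{equation*}
\begin{tikzcd}
\phantom{X}
& \Map(\RealP{\Delta^J},A) \arrow{dl}[swap]{\iota_0} \arrow{dr}{\Map(\RealP{\Delta^J},\iota_0)} \arrow[bend left=10]{drrr}{\Map(\RealP{\Delta^J},f)}
&\phantom{X} &\phantom{X} &\phantom{X}
\\
\Map(\RealP{\Delta^J},A)\times\Delta^1 \arrow{rr}{h}
&\phantom{X}
& \Map(\RealP{\Delta^J},A\otimes\Delta^1) \arrow{rr}{\Map(\RealP{\Delta^J},H)}
&\phantom{X}
& \Map(\RealP{\Delta^J},B)
\\
\phantom{X}
& \Map(\RealP{\Delta^J},A) \arrow{ul}{\iota_1} \arrow[swap]{ur}{\Map(\RealP{\Delta^J},\iota_1)} \arrow[bend right=10,swap]{urrr}{\Map(\RealP{\Delta^J},g)}
\end{tikzcd}
\end{equation*}
so that $\Map(\RealP{\Delta^J},H)\circ h$ is a simplicial homotopy from $\Map(\RealP{\Delta^J},f)$ to $\Map(\RealP{\Delta^J},g)$.

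The map $h$ is constructed exactly as in Proposition \ref{HomotopyPi}: on $n$-simplices, a pair $(\sigma,\tau)$ with $\sigma\colon \RealP{\Delta^J}\otimes \Delta^n\to A$ and $\tau\colon \Delta^n\to \Delta^1$ is sent to $\sigma\otimes(\tau\circ \pr_{\Delta^n})\colon \RealP{\Delta^J}\otimes \Delta^n \to A\otimes \Delta^1$. This is well defined since $\pr_{\Delta^n}\colon \RealP{\Delta^J}\otimes \Delta^n\to \Delta^n$ is a map of filtered spaces (it factors through the projection onto $\Real{\Delta^n}$ after forgetting the filtration) and the construction is strictly natural in $\sigma$ and $\tau$; the face and degeneracy compatibilities are the same as in the simplicial case.

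Passing to homotopy groups for each choice of pointing of $\Map(\RealP{\Delta^J},A)$, we conclude that $\pi_n(\Map(\RealP{\Delta^J},f))=\pi_n(\Map(\RealP{\Delta^J},g))$ for every $n\geq 0$ and every $\Delta^J\subseteq N(P)$. By Definition \ref{TopFilteredHomotopyGroup} this is precisely the equality of filtered simplicial (set/group) maps $s\pi_n(f)=s\pi_n(g)$.

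The only nontrivial point is checking that the combinatorial formula for $h$ really lands in the filtered $\Map$, i.e.\ that the product $\sigma\otimes(\tau\circ\pr_{\Delta^n})$ still respects the filtration over $P$; but since $A\otimes \Delta^1=A\times \Real{\Delta^1}\to P$ is filtered purely by its $A$-coordinate, this reduces to the filtered character of $\sigma$, which is given. Alternatively (and even more briefly), one may apply $\Sing_P$ to $H$ and use the canonical map $\Sing_P(A)\otimes \Delta^1\to \Sing_P(A\otimes \Delta^1)$ together with Remark \ref{IsomorphismOfPi} to reduce the statement directly to Proposition \ref{HomotopyPi}.
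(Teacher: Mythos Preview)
Your proof is correct and essentially follows the paper's approach. In fact, the ``alternative'' route you sketch at the end---applying $\Sing_P$ to $H$, using the canonical map $\Sing_P(A)\otimes\Delta^1\to\Sing_P(A\otimes\Delta^1)$, and invoking Remark~\ref{IsomorphismOfDiagram} to reduce to Proposition~\ref{HomotopyPi}---is precisely what the paper does; your primary argument is just the same construction unwound directly at the level of the topological $\Map(\RealP{\Delta^J},-)$ rather than first passing through $\Sing_P$.
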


\begin{proof}
Let $H\colon A\otimes\Delta^1\to B$ be a homotopy between $f$ and $g$. Then, applying $\Sing_P$ to the diagram giving the homotopy between $f$ and $g$, we get the right part of the following diagram
\begin{equation*}
\begin{tikzcd}
\phantom{X}
&\Sing_P(A)
\arrow[swap]{dl}{\iota_0}
\arrow{dr}{Sing(\iota_0)}
\arrow[bend left=10]{drrr}{Sing(f)}
&\phantom{X}
&\phantom{X}
&\phantom{X}
\\
\Sing_P(A)\otimes \Delta^1
\arrow{rr}{h}
&\phantom{X}
&\Sing_P(A\otimes \Delta^1)
\arrow{rr}{\Sing_P(H)}
&\phantom{X}
&\Sing_P(B)
\\
\phantom{X}
&\Sing_P(A)
\arrow{ul}{\iota_1}
\arrow[swap]{ur}{\Sing_P(\iota_1)}
\arrow[swap, bend right=10]{urrr}{\Sing_P(g)}
\end{tikzcd}
\end{equation*}
Now, the morphism $h$ can be defined as follows

\begin{align*}
\Sing_P(A)\otimes\Delta^1&\to \Sing_P(A\otimes\Delta^1)\\
\left(\sigma\colon\Real{\Delta^J}_P\to A,\tau\colon \Delta^J\to \Pr(\Delta^1)\right)&\mapsto \left(\sigma\times \Real{\tau}_P\colon \Real{\Delta^J}_P\to A\otimes\Delta^1\right).
\end{align*}
Now, $\Sing_P(H)\circ h$ gives us a homotopy between $\Sing_P(f)$ and $\Sing_P(g)$. Applying $D$, and using the same argument as in the proof of Proposition \ref{HomotopyPi}, we get a homotopy between $D(\Sing_P(f))$ and $D(\Sing_P(g)$, but now by Remark \ref{IsomorphismOfDiagram} this implies that $f$ and $g$ induce the same map between filtered homotopy groups.
\end{proof}

\begin{corollaire}\label{DirectWhitehead}
Let $f\colon A\to B$ be a filtered homotopy equivalence between two filtered spaces. Then, the maps of functors $ s\pi_0(f)$ and  $ s\pi_n(f)$, $n\geq 1$ are isomorphisms for all pointings of $A$.
\end{corollaire}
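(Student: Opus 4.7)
The plan is to run the classical two-sided argument: if $g\colon B\to A$ is a filtered homotopy inverse of $f$, then $s\pi_n(g)$ and $s\pi_n(f)$ should be mutually inverse, with Proposition \ref{HomotopyGroupInvariant} supplying the homotopy invariance, functoriality of $s\pi_n$ supplying the multiplicativity, and Proposition \ref{HomotopyPointing} handling the fact that $g\circ f\circ\phi\neq\phi$ on the nose. Fix a pointing $\phi\colon \RealP{V}\to A$, let $g$ be a homotopy inverse with filtered homotopies $H\colon A\otimes\Delta^1\to A$ from $\Id_A$ to $g\circ f$ and $K\colon B\otimes\Delta^1\to B$ from $\Id_B$ to $f\circ g$, and set $\psi=f\circ\phi$.

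The $s\pi_0$ case is immediate: there are no basepoints to worry about, and functoriality combined with Proposition \ref{HomotopyGroupInvariant} gives
\begin{equation*}
 s\pi_0(g)\circ s\pi_0(f)= s\pi_0(g\circ f)= s\pi_0(\Id_A)=\Id_{ s\pi_0(A)},
\end{equation*}
and symmetrically on the other side. Hence $ s\pi_0(f)$ is a bijection on each $\Delta^J$ slot, so an isomorphism of filtered simplicial sets.

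For $n\geq 1$ the only subtlety is the change of basepoint, since $ s\pi_n(g\circ f)$ naturally lands in $ s\pi_n(A,g\circ f\circ\phi)$, not $ s\pi_n(A,\phi)$. Precomposing $H$ with $\phi\otimes\Id_{\Delta^1}$ produces a filtered homotopy $\theta\colon \RealP{V}\otimes\Delta^1\to A$ between the pointings $\phi$ and $g\circ f\circ\phi$. Through the isomorphism of diagram functors of Remark \ref{IsomorphismOfDiagram}, the topological analogue of Proposition \ref{HomotopyPointing} supplies a canonical isomorphism $ s\pi_n(A,\theta)\colon s\pi_n(A,\phi)\xrightarrow{\sim} s\pi_n(A,g\circ f\circ\phi)$, and under this identification the equality $ s\pi_n(g\circ f)= s\pi_n(\Id_A)$ from Proposition \ref{HomotopyGroupInvariant} reads
\begin{equation*}
 s\pi_n(g)\circ s\pi_n(f)= s\pi_n(A,\theta).
\end{equation*}
The right-hand side is invertible, so $ s\pi_n(f)$ is a split monomorphism. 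Running the symmetric argument on $B$ with the homotopy $K$ and the pointing $\psi$ shows that $ s\pi_n(f)\circ s\pi_n(g)$ equals the analogous change-of-basepoint isomorphism, so $ s\pi_n(f)$ is also a split epimorphism, hence an isomorphism of filtered simplicial groups.

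The main (and only) thing to be careful about is the basepoint bookkeeping in the previous paragraph; all the required tools—functoriality of $ s\pi_n$, homotopy invariance (Proposition \ref{HomotopyGroupInvariant}), and the change-of-pointing isomorphism (Proposition \ref{HomotopyPointing})—have already been established, so the proof is essentially a formal assembly.
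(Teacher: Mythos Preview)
Your proof is correct and follows essentially the same two-sided inverse argument as the paper. If anything, you are more careful than the paper's own proof: the paper simply writes $s\pi_n(g)\circ s\pi_n(f)=s\pi_n(\Id_A)=\Id$ without commenting on the fact that the composite lands in $s\pi_n(A,g\circ f\circ\phi)$ rather than $s\pi_n(A,\phi)$, whereas you explicitly invoke the change-of-pointing isomorphism (the topological analogue of Proposition~\ref{HomotopyPointing}) to close that gap.
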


\begin{proof}
Let $g$ be the homotopy inverse of $f$. Then there exists filtered homotopies $f\circ g\sim \Id_{B}$ and $g\circ f\sim \Id_{A}$. By Proposition \ref{HomotopyGroupInvariant}, this implies that $ s\pi_n(f)\circ s\pi_n(g)= s\pi_n(\Id_B)=\Id$, and that $ s\pi_n(g)\circ  s\pi_n(f)= s\pi_n(\Id_A)=\Id$ for all $n\geq 0$. In particular, the $ s\pi_n(f)$ are isomorphisms with inverse $ s\pi_n(g)$.
\end{proof}

We now are ready to state and prove the main theorem

\begin{theo}\label{Whitehead}
Let $f\colon A\to B$ be a filtered map between two fibrant filtered spaces such that $A$ and $B$ are filtered homeomorphic to the realisation of some filtered simplicial sets $X$ and $Y$. Then $f$ is a filtered homotopy equivalence if and only if the maps of functors $ s\pi_0(f)\colon  s\pi_0(A)\to  s\pi_0(B)$ and $ s\pi_n(f)\colon  s\pi_n(A,\phi)\to s\pi_n(B,f\circ\phi)$ are isomorphisms for all $n\geq 0$ and all pointings of $A$. 
\end{theo}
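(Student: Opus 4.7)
The forward direction is Corollary \ref{DirectWhitehead}; I focus on the converse. The strategy is to transfer the problem to filtered simplicial sets via the simplicial adjunction $\RealP{-} \dashv \Sing_P$ of Proposition \ref{SimplicialAdjunction}, apply Theorem \ref{WhiteheadSimplicial}, and then return to the topological side.

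Since $A$ and $B$ are fibrant, $\Sing_P(A)$ and $\Sing_P(B)$ are fibrant filtered simplicial sets by definition. The adjunction places pointings of $A$ (Definition \ref{PointingTopologicalSpaces}) in bijection with pointings of $\Sing_P(A)$ (Definition \ref{PointingSimplicialSets}), and Remark \ref{IsomorphismOfPi} identifies $s\pi_n(A,\phi) \cong s\pi_n(\Sing_P(A),\phi^\sharp)$ in a way that intertwines $s\pi_n(f)$ with $s\pi_n(\Sing_P(f))$. The hypothesis therefore forces $s\pi_n(\Sing_P(f))$ to be an isomorphism for every $n$ and every pointing, and Theorem \ref{WhiteheadSimplicial} yields that $\Sing_P(f)$ is a filtered homotopy equivalence in $\sS_P$.

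Fix a simplicial homotopy inverse $g \colon \Sing_P(B) \to \Sing_P(A)$ of $\Sing_P(f)$. Using $A \cong \RealP{X}$, $B \cong \RealP{Y}$ together with the unit $\eta$ and counit $\epsilon$ of the adjunction, define
\[
\bar g \;:=\; \epsilon_A \circ \RealP{g} \circ \RealP{\eta_Y} \colon B \to A.
\]
Because $\RealP{-}$ is a simplicial left adjoint, it carries the simplicial homotopies $g \circ \Sing_P(f) \simeq \Id$ and $\Sing_P(f) \circ g \simeq \Id$ to filtered homotopies after realisation. Naturality of $\epsilon$ along $f$ together with the triangle identity $\epsilon_B \circ \RealP{\eta_Y} = \Id_B$ then gives
\[
f \circ \bar g \;=\; \epsilon_B \circ \RealP{\Sing_P(f) \circ g} \circ \RealP{\eta_Y} \;\simeq\; \epsilon_B \circ \RealP{\eta_Y} \;=\; \Id_B.
\]

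The main obstacle will be the converse identity $\bar g \circ f \simeq \Id_A$, because naturality of $\eta$ cannot be applied directly to $f$, which need not be the realisation of a simplicial map. The plan is to rewrite $f = \epsilon_B \circ \RealP{f^\flat}$ for the adjoint simplicial map $f^\flat := \Sing_P(f) \circ \eta_X$ and reduce the question to showing that the counit $\epsilon_B \colon \RealP{\Sing_P(B)} \to B$ is a filtered homotopy equivalence. The triangle identity already exhibits $\RealP{\eta_Y}$ as a section of $\epsilon_B$, so it will suffice to verify that $\RealP{\eta_Y}$ is itself a filtered homotopy equivalence---the filtered analogue of the classical fact that $\eta_Y \colon Y \to \Sing_P(\RealP{Y})$ is a weak equivalence whenever the target is fibrant, combined with the observation that realisation sends such a weak equivalence to a filtered homotopy equivalence in $\Top_P$. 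The symmetric argument for $\RealP{\eta_X}$ then produces $\bar g \circ f \simeq \Id_A$ and completes the proof.
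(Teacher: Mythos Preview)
Your proof is correct and matches the paper's argument up through the verification that $f\circ\bar g\simeq\Id_B$. The gap is in the final step.

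To conclude $\bar g\circ f\simeq\Id_A$, you propose to show that $\RealP{\eta_Y}$ is a filtered homotopy equivalence (equivalently, that the counit $\epsilon_B$ is one). You describe this as ``the filtered analogue of the classical fact that $\eta_Y$ is a weak equivalence\dots\ combined with the observation that realisation sends such a weak equivalence to a filtered homotopy equivalence in $\Top_P$.'' Neither of these ingredients is available in the paper. There is no model structure on $\Top_P$, no proof that $\RealP{-}\dashv\Sing_P$ is a Quillen equivalence, and no independent argument that $\eta_Y\colon Y\to\Sing_P(\RealP{Y})$ is a weak equivalence in $\sS_P$. In the unfiltered case these facts are non-trivial theorems (Milnor, Quillen); asserting their filtered versions without proof is precisely the circularity the paper is working to avoid. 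Indeed, once one knows $\epsilon$ is always a filtered homotopy equivalence, Theorem~\ref{Whitehead} follows almost immediately from Theorem~\ref{WhiteheadSimplicial}, so you are assuming something at least as strong as what you are trying to prove.

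The paper sidesteps this entirely with an elementary trick. Having shown $f\circ\bar g\simeq\Id_B$, one deduces from Proposition~\ref{HomotopyGroupInvariant} that $s\pi_n(f)\circ s\pi_n(\bar g)=\Id$; since $s\pi_n(f)$ is an isomorphism by hypothesis, $s\pi_n(\bar g)$ is an isomorphism for every pointing of $B$. Thus $\bar g$ itself satisfies the hypotheses of the theorem (with the roles of $A$ and $B$ interchanged), and the \emph{same} construction applied to $\bar g$ produces $h\colon A\to B$ with $\bar g\circ h\simeq\Id_A$. The standard chain
\[
\bar g\circ f \;\simeq\; \bar g\circ f\circ \bar g\circ h \;\simeq\; \bar g\circ h \;\simeq\; \Id_A
\]
then finishes the proof without ever needing to know that $\epsilon$ or $\RealP{\eta}$ is a homotopy equivalence.
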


\begin{proof}
The direct implication is precisely Corollary \ref{DirectWhitehead}.
For the reverse implication, the following diagram contains the idea behind the proof
\begin{equation*}
\begin{tikzcd}[column sep=large]
\phantom{X}
&\Real{\Sing_P(B)}_P
\arrow{r}{\Real{\widetilde{g}}_P}
&\Real{\Sing_P(A)}_P
\arrow{d}{\ev_A}
\arrow{r}{\Real{\Sing_P(f)}_P}
&\Real{\Sing_P(B)}
\arrow{d}{\ev_B}
\\
A
\arrow{r}{h}
\arrow{d}{\iota_A}
&B
\arrow{r}{g}
\arrow{u}{\iota_B}
&A
\arrow{r}{f}
&B\\
\Real{\Sing_P(A)}_P
\arrow{r}{\Real{\widetilde{h}}_P}
&\Real{\Sing_P(B)}_P
\arrow{r}{\Real{\Sing_P(g)}_P}
\arrow{u}{\ev_B}
&\Real{\Sing_P(A)}_P
\arrow{u}{\ev_A}
\end{tikzcd}
\end{equation*}
Let $f\colon A\to B$ be as in the theorem and such that $f$ induces isomorphism on every homotopy groups. Using $\ev$, the co-unit of the adjunction between $\Sing_P$ and $\Real{-}_P$, we get the rightmost square of our diagram.
Now, by Remark \ref{IsomorphismOfPi}, $\Sing_P(f)$ induces isomorphisms on all homotopy groups with pointings coming from pointing of $A$. But, by adjunction, any pointing of $\Sing_P(A)$ comes from a pointing of $A$. So, since $A$ and $B$ are fibrant by hypothesis, we can apply Theorem \ref{WhiteheadSimplicial},
to get $\widetilde{g}\colon \Sing_P(B)\to \Sing_P(A)$, a homotopy inverse to $\Sing_P(f)$. Now since $B\simeq \Real{Y}_P$, we can define 
\begin{align*}
\iota_B\colon B&\to \Real{\Sing_P(B)}_P\\
(\sigma\colon \Delta^J\to Y,t)&\mapsto (\Real{\sigma}_P\colon \Real{\Delta^J}_P\to \Real{Y}_P,t)
\end{align*}
It is in fact the realisation of the unit of the adjunction between $\Sing_P$ and $\Real{-}_P$. Defining \\ $g=\ev_A\circ\Real{\widetilde{g}}_P\circ\iota_B$, we get the middle upper commutative square.
We claim that $g$ is a right homotopy inverse to $f$. To see that, let $\widetilde{H}\colon \Sing_P(B)\otimes\Delta^1\to \Sing_P(B)$ be a homotopy between
$\Sing_P(f)\circ\widetilde{g}$ and $\Id_{\Sing_P(B)}$. Then, $H:=\ev_B\circ\Real{\widetilde{H}}_P\circ(\iota_B\otimes\Delta^1)
\colon B\otimes \Delta^1\to B$ gives us a homotopy between $f\circ g$ and $\Id_B$.
But then, using Proposition \ref{HomotopyGroupInvariant}, We know that $ s\pi_n(f)\circ s\pi_n(g)= s\pi_n(f\circ g)=\Id_{ s\pi_n(B)}$. And since $ s\pi_n(f)$ is an isomorphism by hypothesis, we get that $ s\pi_n(g)$ is also an isomorphism. But then, we can apply the same construction to $g$, to get the last two bottom squares. We get a map $h\colon A\to B$ and a homotopy $H'\colon A\otimes\Delta^1\to A$ between $g\circ h$ and $\Id_{A}$. But now we have the following homotopies :
\begin{align*}
g\circ f&\sim_{g\circ f\circ H'} g\circ f\circ g\circ h\\
&\sim_{g\circ H\circ h} g\circ h\\
&\sim_{H'} \Id_B
\end{align*} 
And so $g$ is a two-sided homotopy inverse to $f$.
\end{proof}

\begin{remarque}
In Theorem \ref{Whitehead}, the hypothesis that $A$ and $B$ are fibrant filtered spaces is quite strong. By definition, it is equivalent to the hypothesis that $\Sing_P(A)$ and $\Sing_P(B)$ are fibrant, which was necessary to apply Theorem \ref{WhiteheadSimplicial}. By Proposition \ref{ConStratFibrant}, we know that it is true for any conically stratified space. Notice that the proof of Proposition \ref{ConStratFibrant} relied solely on the fact that the underlying simplicial set is a quasi-category, which is proved in \cite[Theorem A.6.4]{HigherAlgebra}. Similarily, by \cite[Proposition 8.1.2.6]{NandLal} any homotopically stratified metric space (in the sense of \cite{Quinn}) with a finite number of strata is fibrant, and so Theorem \ref{Whitehead} also applies to those objects. One can also show that $\Sing_P(Q)$ is a quasi-category when $Q$ is a poset with the Alexandrov topology with some continuous (hence order preserving) map $Q\to P$, which implies that $Q$ is fibrant.
\end{remarque}

\section{Toward a theorem of Miller}\label{TowardATheoremOfMiller}

In \cite[Theorem 6.3]{Miller}, the author shows that a strongly stratified map between two homotopically stratified spaces is a stratified homotopy equivalence if and only if it induces homotopy equivalences between strata and holinks. In this section, we derive from Theorem \ref{Whitehead} a comparable result. We get Theorem \ref{AlmostMiller} that states that a filtered map between conically stratified PL spaces is a filtered homotopy equivalence if and only if it induces weak equivalences between strata and holinks.

To be able to state this result properly, we first need to make a few definitions. Recall that if $M$ and $N$ are two topological spaces, there is the natural compact open topology on $\Hom_{\Top}(M,N)$ we will write the corresponding topological space as $\Cont(M,N)$.

\begin{defin}
Let $A$ and $B$ be two filtered topological spaces. We define the topological space $\Cont_P(A,B)$ as the subspace of $\Cont(\Er(A),\Er(B))$, given by the inclusion $\Hom_{\Top_P}(A,B)\subseteq \Hom_{\Top}(\Er(A),\Er(B))$.
This defines a functor 
\begin{equation*}
\Cont_P\colon \Top_P^{\op}\times \Top_P\to \Top
\end{equation*}
\end{defin}

\begin{defin}[Strata]
Let $(A,\varphi\colon A\to P)$ be a filtered space and $p\in P$ be an element of the base poset. Then $A_p$, the $p$-th strate of $A$ is defined as $A_p:=\varphi^{-1}(\{p\})\subseteq A$. Equivalently, $A_p=\Cont_P(\Real{\  [p]\ }_P,A)$.
\end{defin}

\begin{defin}[Holink]
Let $p\leq q\in P$, and $A$ be a filtered space. We define $\Hol_{p,q}(A)$, the holink of the $p$-th and $q$-th strata of $A$ as
\begin{equation*}
\Hol_{p,q}(A)=\Cont(\Real{\  [p,q]\ }_P,A).
\end{equation*}
\end{defin}

We can now state the main result of this section

\begin{theo}\label{AlmostMiller}
Let $f\colon A \to B$ be a filtered map between conically stratified spaces, such that $A$ and $B$ are isomorphic to the filtered realisation of two filtered simplicial sets $X$ and $Y$ respectively. Then $f$ is a filtered homotopy equivalence if and only if it induces weak equivalences between corresponding strata and holinks.
\end{theo}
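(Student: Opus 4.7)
The plan is to deduce Theorem \ref{AlmostMiller} from the filtered Whitehead Theorem \ref{Whitehead}. Since conically stratified spaces are fibrant by Proposition \ref{ConStratFibrant}, the hypotheses of Theorem \ref{Whitehead} are in place, so $f$ is a filtered homotopy equivalence if and only if it induces isomorphisms on all filtered homotopy groups $s\pi_n$. The forward direction is then essentially routine: if $g$ is a filtered homotopy inverse of $f$, postcomposition with $g$ provides homotopy inverses on each stratum $A_p = \Cont_P(\Real{[p]}_P, A)$ and each holink $\Hol_{p,q}(A) = \Cont_P(\Real{[p,q]}_P, A)$, and homotopy equivalences are in particular weak equivalences.

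For the reverse implication, assuming $f$ restricts to weak equivalences between corresponding strata and holinks, the goal is to show that $\Map(\Real{\Delta^J}_P, f)$ is a weak equivalence of simplicial sets for every filtered simplex $\Delta^J$; then $s\pi_n(f)$ is an isomorphism for every pointing, and Theorem \ref{Whitehead} applies. I would proceed by a two-step reduction. First, if $\Delta^J$ has repeated colors, the linear collapse of repeated vertices inside each stratum defines a filtered deformation retraction $\Real{\Delta^J}_P \to \Real{\Delta^{J_0}}_P$; since both $A$ and $B$ are fibrant, this induces compatible weak equivalences of mapping spaces by the simplicial model category structure on $\Top_P$, reducing the problem to non-degenerate chains $\Delta^{J_0} = [p_0, \ldots, p_d]$.

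The remaining core is then an induction on the length $d$ of the chain. The case $d = 0$ is the hypothesis on strata, via the identification $\Map(\Real{[p]}_P, A) \cong \Sing(A_p)$; the case $d = 1$ is the hypothesis on holinks, via the analogous identification $\Map(\Real{[p,q]}_P, A) \cong \Sing(\Hol_{p,q}(A))$ coming from the adjunction $\Real{-}_P \dashv \Sing_P$. For the inductive step, I would use the conical structure of $A$ near the top stratum $p_d$ to decompose $\Real{[p_0, \ldots, p_d]}_P$ into pieces whose mapping spaces can be controlled by shorter chains: specifically, I would realise $\Map(\Real{[p_0, \ldots, p_d]}_P, A)$ as a homotopy limit involving $\Map(\Real{[p_0, \ldots, p_{d-1}]}_P, A)$, the holink mapping space $\Map(\Real{[p_{d-1}, p_d]}_P, A)$, and a lower-dimensional interface. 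Once such a decomposition is established, the inductive hypothesis together with the gluing lemma for homotopy limits yields a weak equivalence on $\Map(\Real{[p_0, \ldots, p_d]}_P, f)$.

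The main obstacle will be rigorously producing this homotopy-limit decomposition in the inductive step. The PL hypothesis on $A \cong \Real{X}_P$ and $B \cong \Real{Y}_P$ is what makes this plausible, since it allows the conical charts to be chosen compatibly with the simplicial structures and to produce an explicit filtered simplicial cover of $\Real{[p_0, \ldots, p_d]}_P$ whose pieces collapse (via filtered deformation retracts) onto filtered realisations of strictly shorter chains or onto holinks. Carrying this out at the level of the simplicial mapping sets $\Map(-, A)$, rather than merely up to weak equivalence of underlying topological spaces, is the delicate technical point: one needs the cover and its associated homotopy pullback square to be realised functorially in $\sS_P$, so that $f$ acts on each piece and the gluing lemma delivers a weak equivalence of total mapping simplicial sets, not just a bijection on homotopy groups.
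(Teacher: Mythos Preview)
Your forward direction and the reduction to non-degenerate simplices $\Delta^{J_0}$ are fine and match the paper. The substantive divergence is in how you handle $\Map(\Real{\Delta^{J_0}}_P,f)$ for $d\geq 2$.

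You propose an induction on $d$, using the conical structure of $A$ to decompose $\Map(\Real{[p_0,\dots,p_d]}_P,A)$ as a homotopy limit built from shorter chains and a holink. You correctly flag this decomposition as the main obstacle, and indeed it is a genuine gap: you have not explained why the inclusion of the subcomplex $[p_0,\dots,p_{d-1}]\cup_{[p_{d-1}]}[p_{d-1},p_d]$ into $[p_0,\dots,p_d]$ should induce a weak equivalence on $\Map(-,\Sing_P(A))$, and the conical charts of $A$ do not obviously produce such a statement about mapping \emph{out of} the standard simplex. The chart structure controls local neighbourhoods in $A$, not a functorial decomposition of the source simplex compatible with the simplicial mapping sets.

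The paper avoids this entirely by a different mechanism. The key observation is that for conically stratified $A$, $U(\Sing_P(A))$ is a \emph{quasi-category} (Lurie, \cite[Theorem A.6.4]{HigherAlgebra}). One then introduces the outer skeleton (spine) $\OSk(\Delta^{J_0})=[p_0,p_1]\cup_{[p_1]}[p_1,p_2]\cup\cdots\cup[p_{d-1},p_d]$. Since the spine inclusion $\Sp(\Delta^d)\hookrightarrow\Delta^d$ is inner anodyne in the Joyal model structure, the induced map $\Map(\Delta^{J_0},\Sing_P(A))\to\Map(\OSk(\Delta^{J_0}),\Sing_P(A))$ is a trivial fibration (Lemma \ref{JoyalPath}). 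The right-hand side is then a finite homotopy limit of strata and holink mapping spaces, on which $f$ acts by weak equivalence by hypothesis, and two-out-of-three finishes. So the conically stratified hypothesis is used purely to access the Joyal-fibrancy of $\Sing_P(A)$, not to build geometric covers; this is the idea your inductive sketch is missing.
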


\begin{remarque}
What Theorem \ref{AlmostMiller} tells us is that, when working with conically stratified spaces, it is enough to understand the strata and the interaction between pairs of strata through the holinks to understand the homotopy types. Comparing it with Theorem \ref{Whitehead} we see that it is a much easier condition to check. Conversely, one has that any morphism between conically stratified spaces inducing weak equivalences on strata and holinks must induce isomorphisms on all homotopy groups by Theorem \ref{Whitehead} which means that such a morphism preserves the interactions between any number of strata. 
\end{remarque}

A few lemmas and definitions are needed to go from Theorem \ref{Whitehead} to Theorem \ref{AlmostMiller}. 

\begin{defin}
Let $(A,\varphi\colon A\to P)$ be a filtered topological space and $M$ be a topological space. We define $A\otimes M$ as the filtered space $(A\times M,\varphi\circ\proj_{A}\colon A\times M\to P)$ where $\proj_{A}$ is the projection on $A$.
This defines a functor 
\begin{equation*}
-\otimes -\colon \Top_P\times \Top\to \Top_P
\end{equation*}
\end{defin}



\begin{lemme}\label{HolSing}
Let $A$ be a filtered topological space, and $p\leq q\in P$. We have natural isomorphisms $\Sing(\Hol_{p,q}(A))\simeq \Map(\Real{ \ [p,q] \ }_P,A)$, and $\Sing(A_p)\simeq \Map(\Real{ \ [p]\ }_P,A)$.
\end{lemme}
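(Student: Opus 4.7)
The plan is to prove the lemma by unwinding both sides simplex-by-simplex and invoking the adjunction stated in the preceding unlabeled lemma, together with the classical $\Real{-}\dashv\Sing$ adjunction for plain topological spaces.

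First, fix $n\geq 0$ and compute the $n$-simplices of the right hand side. By definition of $\Map$ in $\Top_P$,
\begin{equation*}
\Map(\Real{[p,q]}_P,A)_n = \Hom_{\Top_P}\bigl(\Real{[p,q]}_P\otimes \Delta^n,\,A\bigr) = \Hom_{\Top_P}\bigl(\Real{[p,q]}_P\times \Real{\Delta^n},\,A\bigr),
\end{equation*}
where the filtered structure on $\Real{[p,q]}_P\times \Real{\Delta^n}$ is the one coming from the projection on the first factor. This is precisely the filtered space $\Real{[p,q]}_P\otimes \Real{\Delta^n}$ for the tensor $\Top_P\times \Top\to \Top_P$ introduced just before the lemma.

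Second, I apply the adjunction $\Real{[p,q]}_P\otimes -\dashv \Cont_P(\Real{[p,q]}_P,-)$ from the previous lemma with $M=\Real{\Delta^n}$, obtaining a natural bijection
\begin{equation*}
\Hom_{\Top_P}\bigl(\Real{[p,q]}_P\otimes \Real{\Delta^n},\,A\bigr)\simeq \Hom_{\Top}\bigl(\Real{\Delta^n},\,\Cont_P(\Real{[p,q]}_P,A)\bigr).
\end{equation*}
By definition of the holink, the right-hand side equals $\Hom_{\Top}(\Real{\Delta^n},\Hol_{p,q}(A))$, which is $\Sing(\Hol_{p,q}(A))_n$ by the usual $\Real{-}\dashv\Sing$ adjunction for plain topological spaces. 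Concatenating these bijections yields the desired isomorphism at the level of each $n$-simplex set.

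Third, I check that these simplex-wise isomorphisms are compatible with faces and degeneracies, so that they assemble into an isomorphism of simplicial sets natural in $A$. This compatibility is an immediate consequence of the naturality of both adjunctions in the variable $\Delta^n$, together with the fact that the simplicial structure maps on both sides are induced by precomposition with $\Real{d^i}$ and $\Real{s^i}$. No step in this chain presents a real difficulty; if anything, the only point to be careful about is to record that the two tensors in play, the simplicial one $A\otimes K=A\times \Real{K}$ and the new one $A\otimes M$, agree on realisations and thus allow the adjunction to be invoked. Finally, the strata case $\Sing(A_p)\simeq \Map(\Real{[p]}_P,A)$ follows by the exact same argument applied with $\Delta^{[p]}=[p]$ in place of $[p,q]$.
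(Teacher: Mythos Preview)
Your proof is correct and follows essentially the same approach as the paper: both arguments compute the $n$-simplices on each side and pass between them via the adjunction $\Real{[p,q]}_P\otimes -\dashv \Cont_P(\Real{[p,q]}_P,-)$, then note compatibility with faces and degeneracies. The only cosmetic difference is that the paper runs the chain of bijections from $\Sing(\Hol_{p,q}(A))_n$ to $\Map(\Real{[p,q]}_P,A)_n$, whereas you run it in the opposite direction and spell out more of the intermediate identifications.
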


\begin{proof}
Let $n\geq 0$, we have the following chain of bijections.
\begin{align*}
\Sing(\Hol_{p,q}(A))_n&\simeq \Hom_{\Top}(\Real{\Delta^n},\Hol_{p,q}(A))\\
&\simeq \Hom_{\Top}(\Real{\Delta^n},\Cont_P(\Real{ \ [p,q] \ }_P,A))\\
&\simeq \Hom_{\Top_P}(\Real{ \ [p,q] \ }_P\otimes \Real{\Delta^n},A)\\
&\simeq \Map(\Real{ \ [p,q] \ }_P,A)_n
\end{align*}
Where the third bijection comes from the following observations. Since the space underlying $\RealP{[p,q]}$ is homeomorphic to the interval $[0,1]$, there is a natural bijection 
\begin{equation}\label{EquationHolinkSing}
\Hom_{\Top}(\Real{\Delta^n},\Cont(\Real{ \ [p,q] \ },A)\simeq \Hom_{\Top}(\Real{ \ [p,q] \ }\times \Real{\Delta^n},A)
\end{equation}
Now notice that $\Hom_{\Top}(\Real{\Delta^n},\Cont_P(\Real{ \ [p,q] \ }_P,A))$ is a subset of the left hand side of 
\eqref{EquationHolinkSing} and that $\Hom_{\Top_P}(\Real{ \ [p,q] \ }_P\otimes \Real{\Delta^n},A)$ is a subset of the right hand side. Clearly, a map in the left hand side taking values in $\C^0_P(\RealP{[p,q]},A)$ is sent to a filtered map, and conversely, any filtered map in the right hand side comes from such a map. Restricting \eqref{EquationHolinkSing} thus gives the desired bijection. It is then easy to check that the collection of bijections for $n\geq 0$ respect faces and degeneracies, which gives the desired result. The same proof gives us the second isomorphism.
\end{proof}

\begin{lemme}\label{WEonNonDegenerate}
Let $f\colon X\to Y$ be a map between fibrant filtered simplicial sets.
Then $f$ induces weak equivalences $\Map(\Delta^J,X)\to \Map(\Delta^J,Y)$ for all filtered simplices $\Delta^J\to N(P)$
 if and only if it induces weak equivalences for all non-degenerate simplices of $N(P)$, i.e. simplices of the form $\Delta^{J_0}=[p_0,p_1,\dots,p_d]$ with $p_0<p_1<\dots<p_d\in P$. And the same holds for map between filtered topological spaces.
\end{lemme}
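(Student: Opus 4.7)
The plan is to show that for any filtered simplex $\Delta^J$ the inclusion of its underlying non-degenerate simplex $i\colon \Delta^{J_0}\hookrightarrow \Delta^J$ is a filtered homotopy equivalence. Granting this, $i$ is both a monomorphism and a weak equivalence, hence a trivial cofibration. Since $X$ and $Y$ are fibrant, the simplicial model structure of Theorem \ref{SimplicialModelCategory} together with axiom SM7 implies that the restriction maps $\Map(\Delta^J,X)\to \Map(\Delta^{J_0},X)$ and $\Map(\Delta^J,Y)\to \Map(\Delta^{J_0},Y)$ are trivial fibrations of simplicial sets, in particular Kan--Quillen weak equivalences. The naturality square formed by $\Map(\Delta^J,f)$, $\Map(\Delta^{J_0},f)$ and these two restriction maps, together with two-out-of-three, yields the equivalence between the two conditions. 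The same scheme handles the topological assertion once the simplicial case is settled.

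To establish the homotopy equivalence $\Delta^{J_0}\hookrightarrow \Delta^J$, I proceed by induction on the non-negative integer $N(J)-N(J_0)$ counting the ``extra'' vertices. The base case $N(J)=N(J_0)$ is trivial. If $N(J)>N(J_0)$, then $\Delta^J=[q_0,\dots,q_N]$ has at least two consecutive equal colors, say $q_k=q_{k-1}$. In this situation Proposition \ref{AdmissibleHornHomotopyEquivalence} constructs, for the admissible horn inclusion $\Lambda^J_k\hookrightarrow \Delta^J$, an explicit filtered retraction $r\colon \Delta^J\to \Lambda^J_k$ whose image lands inside the face $d_{k-1}(\Delta^J)\subset \Lambda^J_k$, together with an explicit filtered homotopy between $\mathrm{Id}_{\Delta^J}$ and the composition through $d_{k-1}(\Delta^J)$. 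Restricted appropriately, this data exhibits the face inclusion $d_{k-1}(\Delta^J)\hookrightarrow \Delta^J$ itself as a filtered homotopy equivalence. Since the underlying non-degenerate simplex of $d_{k-1}(\Delta^J)$ is still $\Delta^{J_0}$ and $N(d_{k-1}(\Delta^J))=N(J)-1$, the inductive hypothesis applies to $d_{k-1}(\Delta^J)$; closure of filtered homotopy equivalences under composition finishes the claim.

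For the topological statement I invoke the simplicial adjunction of Proposition \ref{SimplicialAdjunction}, which yields natural isomorphisms $\Map(\RealP{\Delta^J},A)\simeq \Map(\Delta^J,\Sing_P(A))$, and similarly for $B$, under which the map induced by $f$ on the topological side corresponds to the map induced by $\Sing_P(f)$ on the simplicial side. Since $A$ and $B$ are fibrant filtered spaces, $\Sing_P(A)$ and $\Sing_P(B)$ are fibrant filtered simplicial sets, so the simplicial case of the lemma applied to $\Sing_P(f)$ yields the topological case. The only step with real content is the filtered homotopy equivalence $\Delta^{J_0}\hookrightarrow \Delta^J$, which is essentially a bookkeeping exercise extracted from the proof of Proposition \ref{AdmissibleHornHomotopyEquivalence}; I do not anticipate any substantive obstacle.
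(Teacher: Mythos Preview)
Your proof is correct and follows essentially the same approach as the paper: both establish that the inclusion $\Delta^{J_0}\hookrightarrow\Delta^J$ is a filtered homotopy equivalence (hence a trivial cofibration), apply the simplicial model structure to get weak equivalences on mapping spaces to fibrant targets, and conclude by two-out-of-three. You simply supply more detail where the paper is terse --- the inductive reduction via faces for the homotopy equivalence, and the passage through $\Sing_P$ for the topological case --- whereas the paper dispatches both in a sentence.
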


\begin{proof}
For any $\Delta^J=[q_0,\dots,q_N]$, $q_0\leq\dots\leq q_N$, there is a corresponding non-degenerate simplex of $N(P)$, $\Delta^{J_0}=[p_0,\dots,p_d]$, $p_0<\dots<p_d$, with $\{q_0,\dots,q_N\}=\{p_0,\dots,p_d\}$. Any inclusion $\Delta^{J_0}\subset \Delta^J$ is a trivial cofibration (the only map going in the other direction is a homotopy inverse). But now applying $\Map$, we get the following diagram
\begin{equation*}
\begin{tikzcd}[column sep=large]
\Map(\Delta^J,X)
\arrow{d}
\arrow{r}{\Map(\Delta^J,f)}
&\Map(\Delta^{J},Y)
\arrow{d}
\\
\Map(\Delta^{J_0},X)
\arrow{r}{\Map(\Delta^{J_0},f)}
&\Map(\Delta^{J_0},Y)
\end{tikzcd} 
\end{equation*}
Since the vertical morphisms are weak equivalences, by the two out of three axiom, we get the desired equivalence. The same proof also gives the desired result for filtered spaces.
\end{proof}

\begin{remarque}\label{SkeletonHomotopyGroups}
Let $A$ be a topological space filtered over $P$ (or a fibrant filtered simplicial set), and $\phi$ be a pointing of $A$. Lemma \ref{WEonNonDegenerate} implies that in order to compute $s\pi_n(A,\phi)$, it is enough to compute $s\pi_n(A,\phi)(\Delta^{J_0})$, for all non-degenerate simplices $\Delta^{J_0}\in N(P)_{\nd}$.
Indeed, Lemma \ref{WEonNonDegenerate} implies that if $\Delta^J=s(\Delta^{J_0})$ where $s$ is some degeneracy, then $s\pi_n(A,\phi)(\Delta^J)\simeq s\pi_n(A,\phi)(\Delta^{J_0})$. 
In particular, this means that if the length of chains in $P$ is bounded by some $d$, then the "dimension" of $s\pi_n(A,\phi)$ will be bounded by $d$. In the particular case of $P=\{*\}$ a filtered space over $P$ is just a space, and we get that $s\pi_n(A,\phi)$ is of dimension $0$, which means that we recover the genuine homotopy groups of $A$.  
\end{remarque}

\begin{defin}
Let $\Delta^{J_0}=[p_0,\dots,p_n]$ be a non-degenerate simplex of $N(P)$.
We define the outer skeleton of $\Delta^{J_0}$,  $\OSk(\Delta^{J_0})$, as the filtered simplicial set whose $0$-simplices are the $p_i$ for $0 \leq i\leq n$, whose non degenerate $1$-simplices are the $[p_i,p_{i+1}]$, and with no non-degenerate simplices of dimension greater than $2$.
\end{defin}

\begin{lemme}\label{JoyalPath}
Let $X$ be a fibrant filtered simplicial set such that $\Er(X)$ is fibrant for the Joyal model structure on $\sS$. Then, the inclusion $\OSk(\Delta^{J_0})\to (\Delta^{J_0})$ induces a weak equivalence $\Map(\Delta^{J_0},X)\to \Map(\OSk(\Delta^{J_0}),X)$.
\end{lemme}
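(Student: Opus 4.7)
The inclusion $\OSk(\Delta^{J_0}) \hookrightarrow \Delta^{J_0}$ in $\sS_P$ is, after forgetting filtrations via $U$, the classical spine inclusion $\Sp[n] \hookrightarrow \Delta^n$ (where $n$ denotes the dimension of $\Delta^{J_0}$), which is inner anodyne in the Joyal model structure on $\sS$. The plan is to reduce the statement to a right lifting property of $X$ against a suitable filtered inner anodyne map.

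\textbf{Step 1.} I would first establish that $X$ enjoys an additional lifting property beyond the one of Proposition \ref{CMFEgale}: it has the right lifting property against every filtered \emph{inner} horn inclusion $\Lambda^J_k \hookrightarrow \Delta^J$ with $0 < k < N(J)$, whether or not this horn is admissible in the sense of Definition \ref{AdmissibleHorn}. Indeed, given such a lifting problem, applying $U$ produces an inner horn lifting problem against $U(X)$, which admits a filler $f \colon \Delta^{N(J)} \to U(X)$ since $U(X)$ is a quasi-category. Both $\pi_X \circ f$ and the prescribed filtration $\pi_{\Delta^J}$ are maps $\Delta^{N(J)} \to N(P)$ that agree on $\Lambda^{N(J)}_k$, and since $0 < k < N(J)$ this horn contains every vertex of $\Delta^{N(J)}$. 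Because $N(P)$ is the nerve of a poset, any simplex of $N(P)$ is determined by its vertices, so $\pi_X \circ f = \pi_{\Delta^J}$, providing the required filtered lift.

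\textbf{Step 2.} Because $\OSk(\Delta^{J_0}) \hookrightarrow \Delta^{J_0}$ is a cofibration and $X$ is fibrant in $\sS_P$, Theorem \ref{SimplicialModelCategory} ensures that the restriction $r \colon \Map(\Delta^{J_0},X) \to \Map(\OSk(\Delta^{J_0}),X)$ is a Kan fibration between Kan complexes. To show it is a weak equivalence it then suffices to show it is a trivial Kan fibration, i.e., that it has the right lifting property against $\partial\Delta^m \hookrightarrow \Delta^m$ for every $m \geq 0$. By the simplicial adjunction, this reduces to showing that $X$, viewed as an object over $N(P)$, has the right lifting property against
\begin{equation*}
i_m \colon (\Delta^{J_0} \otimes \partial\Delta^m) \cup (\OSk(\Delta^{J_0}) \otimes \Delta^m) \hookrightarrow \Delta^{J_0} \otimes \Delta^m.
\end{equation*}

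\textbf{Step 3.} By the standard fact that inner anodyne maps are closed under pushout-product with monomorphisms in the Joyal model structure, $U(i_m)$ is inner anodyne in $\sS$. I would write it as a transfinite composition of pushouts of inner horn inclusions $\Lambda^N_k \hookrightarrow \Delta^N$ (with $0 < k < N$), and lift this decomposition to $\sS_P$ by equipping every intermediate subcomplex with the filtration restricted from $\Delta^{J_0} \otimes \Delta^m$. At each stage the attached simplex acquires a filtration $\Delta^{J^{(\alpha)}}$ via its inclusion into $\Delta^{J_0} \otimes \Delta^m$, turning the attaching inner horn $\Lambda^N_k$ into a filtered inner horn $\Lambda^{J^{(\alpha)}}_k \hookrightarrow \Delta^{J^{(\alpha)}}$ of the form treated in Step 1. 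Applying Step 1 at every stage, $X$ lifts against $i_m$, which concludes the proof. The main technical point is Step 1, where both hypotheses on $X$ must be used simultaneously; Steps 2 and 3 are essentially formal manipulations of the simplicial and Joyal structures.
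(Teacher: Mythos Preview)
Your proof is correct and follows essentially the same route as the paper's: both reduce to showing $X$ lifts against the pushout-product $i_m$, both use that $U(i_m)$ is inner anodyne (the paper phrases it as a Joyal trivial cofibration, obtained from the spine inclusion via the explicit pushout square and two-out-of-three), and both conclude by lifting in $\sS$ against the quasi-category $U(X)$ and then arguing the lift is automatically filtered.

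The only difference is packaging. You isolate the ``automatic filteredness'' as a separate Step~1 about individual inner horns, then decompose $i_m$ into such horns in Step~3; the paper instead lifts $U(i_m)$ all at once and disposes of filteredness in a single sentence (``by commutativity of the diagram, such a lift must be compatible with filtrations''). Your version makes the underlying reason explicit --- maps into $N(P)$ are determined on vertices, and the domain of $i_m$ already contains every vertex of the codomain --- whereas the paper leaves this to the reader. Your decomposition into horns is not strictly necessary (the vertex argument applies directly to the global lift, since $U(A)$ contains all vertices of $U(B)$), but it does no harm.
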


\begin{proof}
By Theorem \ref{SimplicialModelCategory}, we already know that it is a fibration, so we need to show that it is a trivial one. Consider the following lifting problem
\begin{equation*}
\begin{tikzcd}
\partial{\Delta^n}
\arrow{r}{\alpha}
\arrow{d}
&\Map(\Delta^{J_0},X)
\arrow{d}
\\
\Delta^n
\arrow[dashrightarrow]{ur}
\arrow[swap]{r}{\beta}
&\Map(\OSk(\Delta^{J_0},X))
\end{tikzcd} 
\end{equation*}
The existence of the dotted arrow is equivalent, by the adjunction given by the simplicial category structure, to the existence of the following lift
\begin{equation}\label{LiftJoyal}
\begin{tikzcd}
\Delta^{J_0}\otimes \partial(\Delta^n)\cup_{\OSk(\Delta^{J_0})\otimes \partial(\Delta^n)} \OSk(\Delta^{J_0})\otimes \Delta^n
\arrow{r}{\alpha^{\#}\cup \beta^{\#}}
\arrow{d}
&X
\\
\Delta^{J_0}\otimes \Delta^n
\arrow[dashrightarrow]{ur}
\end{tikzcd}
\end{equation}
But since $\Er(X)$ is fibrant in the Joyal model structure, it is enough to check that the morphism on the left is a trivial cofibration in the Joyal model structure.
To see this, consider the following pushout diagram
\begin{equation*}
\begin{tikzcd}
\OSk(\Delta^{J_0})\otimes \partial(\Delta^n)
\arrow{r}{(1)}
\arrow{d}
&\Delta^{J_0}\otimes \partial(\Delta^n)
\arrow{d}
\arrow[bend left = 24]{ddr}
&\phantom{X}
\\
\OSk(\Delta^{J_0})\otimes \Delta^n
\arrow{r}{(2)}
\arrow[bend right = 12]{drr}{(3)}
& \Delta^{J_0}\otimes \partial(\Delta^n)\cup_{\OSk(\Delta^{J_0})\otimes \partial(\Delta^n)} \OSk(\Delta^{J_0})\otimes \Delta^n
\arrow{dr}{(4)}
&\phantom{X}
\\
\phantom{X}
&\phantom{X}
&\Delta^{J_0}\otimes \Delta^n
\end{tikzcd}
\end{equation*}
Now, apply $\Er$ to $(1)$, we get the morphism
\begin{equation*}
 \Sp\left(\Er(\Delta^{J_0})\right)\times \partial(\Delta^n)\times N(P)\to \Er\left(\Delta^{J_0}\right)\times \partial(\Delta^n)\times N(P)
\end{equation*}

Notice that $\Er(\OSk(\Delta^{J_0})$ is isomorphic to $\Sp(\Er(\Delta^{J_0}))$ which is the spine of the simplex $\Er(\Delta^{J_0})$. By \cite[Proposition 2.13]{Joyal}, the inclusion of the spine into the simplex is a trivial cofibration in the Joyal model structure, and as a consequence of \cite[Théorème 6.12]{Joyal} the same is true after taking a product with any simplicial set. For this reason $\Er(1)$ is a trivial cofibration in the Joyal model structure, and the same is true for $\Er(3)$ by the same argument.
Now, since pushout preserve trivial cofibration, $\Er(2)$ is also a trivial cofibration. But then by the two out of three axiom, $\Er(4)$ is a weak equivalence in the Joyal model structure. Since it is also a cofibration, it is a trivial cofibration. Going back to diagram \eqref{LiftJoyal}, and applying $\Er$, there must exists some non-filtered lift since $\Er(X)$ is fibrant in the sense of Joyal. But by the commutativity of the diagram, such a lift must be compatible with filtrations.
\end{proof}

We can now prove Theorem \ref{AlmostMiller}.
\begin{proof}[Proof of Theorem \ref{AlmostMiller}]
Let $f\colon A\to B$ be a map satisfying the hypothesis of Theorem \ref{AlmostMiller}. First suppose that $f$ is a homotopy equivalence. We get immediatly that $f$ induces homotopy equivalences on the strata.
Now, let $p\leq q \in P$, consider the map 
\begin{equation*}
\Sing(\Hol_{p,q}(f))\colon \Sing(\Hol_{p,q}(A))\to\Sing(\Hol_{p,q}(B)).
\end{equation*}
By lemma \ref{HolSing}, it is the map
\begin{equation*}
\Map(\Real{ \ [p,q] \ }_P,f)\colon \Map(\Real{ \ [p,q] \ }_P,A)\to \Map(\Real{ \ [p,q] \ }_P,B)
\end{equation*}
But this is a weak equivalence by Theorem \ref{Whitehead}, and so $f$ induces weak equivalences between the holinks.

Now, suppose $f$ induces weak equivalences on all strata and holinks. Let $p\leq q\in P$, applying $\Sing$ and using Lemma \ref{HolSing} we get the following weak equivalences
\begin{equation*}
\Map(\Real{\  [p]\ }_P,f)\colon \Map(\Real{ \ [p]\ }_P,A)\to \Map(\Real{ \ [p]\ }_P,B) 
\end{equation*}
and
\begin{equation*}
\Map(\Real{ \ [p,q] \ }_P,f)\colon \Map(\Real{ \ [p,q] \ }_P,A)\to\Map(\Real{ \ [p,q] \ }_P,B)
\end{equation*}
Now, let $\Delta^{J_0}=[p_0,\dots,p_n]$ be a non-degenerate simplex of $N(P)$. We know that $\OSk(\Delta^{J_0})$ is the colimit of its non-degenerate simplices, but those are of the form $[p_i]$ and $[p_i,p_{i+1}]$. Using the same argument as in the proof of Proposition \ref{DiagramFibration}, we get that $f$ induces weak equivalences 
\begin{equation*}
\Map(\Real{\OSk(\Delta^{J_0})}_P,f)\colon \Map(\Real{\OSk(\Delta^{J_0})}_P,A)\to \Map(\Real{\OSk(\Delta^{J_0})}_P,B).
\end{equation*}
By Remark \ref{IsomorphismOfDiagram}, $f$ induces weak equivalences 
\begin{equation*}
\Map(\OSk(\Delta^{J_0}),\Sing_P(f))\colon \Map(\OSk(\Delta^{J_0}),\Sing_P(A))\to \Map(\OSk(\Delta^{J_0}),\Sing_P(B))
\end{equation*}
Now consider the following diagram.
\begin{equation*}
\begin{tikzcd}[column sep= 8em]
\Map(\Delta^{J_0},\Sing_P(A))
\arrow{d}
\arrow{r}{\Map(\Delta^{J_0},\Sing(f))}
&\Map(\Delta^{J_0},\Sing_P(B))
\arrow{d}
\\
\Map(\OSk(\Delta^{J_0}),\Sing_P(A))
\arrow{r}{\Map(\OSk(\Delta^{J_0}),\Sing(f))}
&\Map(\OSk(\Delta^{J_0}),\Sing_P(B))
\end{tikzcd}
\end{equation*}
By \cite[Theorem A.6.4]{HigherAlgebra} together with Proposition \ref{ConStratFibrant}, the hypothesis of Lemma \ref{JoyalPath} are verified, and so the vertical morphisms are weak equivalences. By the two out of three property, the top morphism must be a weak equivalence. But then, by 
Lemma \ref{WEonNonDegenerate} $f$ induces weak equivalences :
\begin{equation*}
\Map(\Delta^{J},\Sing_P(f))\colon \Map(\Delta^{J},\Sing_P(A))\to \Map(\Delta^{J},\Sing_P(B))
\end{equation*}
For all filtered simplices $\Delta^J$. In particular, fixing some pointing $\phi\colon V\to A$ and some $n\geq 0$,  we get that $f$ induces isomorphism on homotopy groups
\begin{equation*}
 s\pi_n(f)\colon  s\pi_n(\Map(\Delta^{J},A),\phi)\to s\pi_n(\Map(\Delta^{J},B),f\circ \phi)
\end{equation*}
And so, applying Theorem \ref{Whitehead}, we get that $f$ is a homotopy equivalence.
\end{proof}

We can rephrase Theorem \ref{AlmostMiller} as follows :

\begin{corollaire}\label{CorollaireAlmostMiller}
Under the hypotheses of Theorem \ref{AlmostMiller}, the following assertions are equivalent :
\begin{enumerate}[label=\alph*)]
\item the map $f$ is a filtered homotopy equivalence,
\item the map $f$ induces weak equivalences between strata and holink,
\item for all simplices $\Delta^J\in \Delta(P)$, $f$ induces weak equivalences 
\begin{equation}\label{EquationCorollaireWhitehead}
\Map(\RealP{\Delta^J},A)\to\Map(\RealP{\Delta^J},B)
\end{equation}
\item for all simplices $\Delta^J\in\Delta(P)$ of dimension $\leq 1$, $f$ induces weak equivalences \eqref{EquationCorollaireWhitehead},
\item For all $n\geq 0$, and for all pointing of $A$, $\phi$, $f$ induces an isomorphism on the $1$-skeletons, see Remark \ref{RemarqueFilteredHomotopyGroupsSimplicialSets},

\begin{equation}\label{EquationCorollaireWhitehead2}
s\pi_n(f)\colon s\pi_n(A,\phi)\to s\pi_n(B,f\circ\phi)
\end{equation}
\item For all $n\geq 0$, and for all pointing of $A$, $\phi$, $f$ induces an isomorphism \eqref{EquationCorollaireWhitehead2}.
\end{enumerate}
\end{corollaire}

\begin{proof}
$a\Leftrightarrow b$ is the content of Theorem \ref{AlmostMiller}, and $a\Leftrightarrow f$ is the content of Theorem \ref{Whitehead}. By definition, of the filtered homotopy groups one has $c\Leftrightarrow f$, $e\Leftrightarrow d \Rightarrow b$ and $c\Rightarrow d$, which concludes the proof.
\end{proof}

\begin{remarque}
Clearly, if one wants to check that a given map is a filtered homotopy equivalence, the condition $b$ - or equivalently $d$ - are the most straightforward to check. On the other hand, to prove that no such filtered homotopy equivalence can exist, conditions $c$ and $f$ can be useful. $c$ amounts to checking the homotopy type of generalized holinks, and $f$ corresponds to checking that the group map between homotopy groups of strata and holinks coincide in the filtered spaces that are compared. For examples of use of condition $d$ see Remark \ref{RemarkElementaryNotNeeded}, and for condition $e$ see subsections \ref{SubsectionMoebius} and \ref{PseudoManifoldExample}.
\end{remarque}
\section{Applications and examples}\label{ApplicationsAndExamples}
In this section, we characterize some invariants of the filtered homotopy type, and we give a few examples of filtered topological spaces whose homotopy type can be understood.

\subsection{Invariants of the filtered diagram}
Some invariants of the filtered homotopy type can be expressed directly from the data of the filtered diagram functor (see Definition \ref{FilteredDiagram}).
We will try to describe a few of those invariants, and see how they can be understood topologically.

Let $A$ be a fibrant filtered space. There is an obvious invariant that should not be forgotten which is the homotopy type of its underlying space. Indeed, any filtered homotopy equivalence is in particular a homotopy equivalence of the underlying space. Then the next invariant to consider is the homotopy type of $D(A)$. The fact that this is a filtered homotopy invariant is the reason why the filtered homotopy groups are homotopy invariant.

Now, taking any homotopy invariant of the diagram $D(A)$ gives us a filtered homotopy invariant of $A$, for example the homotopy type of one of its pieces.

\begin{defin}\label{GeneralizedHolink}
Let $A$ be a fibrant filtered space, and let $\Delta^{J_0}\in N(P)_{\nd}$ be a non degenerate filtered simplex. Then we call generalized ($J_0$-)holink of A the topological space 
\begin{equation*}
\Hol_{J_0}(A)=\Cont_P(\Real{\Delta^{J_0}}_P,A)
\end{equation*}
\end{defin}

\begin{remarque}
In the case where $\Delta^{J_0}$ is a point (resp. a $1$-simplex), we get the corresponding strata (resp. holink) of A.
\end{remarque}

\begin{remarque}
Using the same proof as in Lemma \ref{HolSing}, we get that
\begin{equation*}
\Sing(\Hol_{J_0}(A))\simeq \Map(\Real{\Delta^{J_0}}_P,A).
\end{equation*}
This explains why we were able to define filtered homotopy groups for all filtered spaces, not just fibrant ones (see Definition \ref{TopFilteredHomotopyGroup}). Indeed, this implies that $\Map(\RealP{\Delta^{J_0}},A)$ is always a fibrant simplicial set, and there is no ambiguity when defining its homotopy groups. (As opposed to passing to a fibrant replacement, which could mean either : replace $A$ with something fibrant, or replace $\Map(\RealP{\Delta^{J_0}},A)$ with something fibrant).
\end{remarque}

This discussion leads directly to the following proposition.

\begin{prop}\label{StrataInvariants}
The homotopy types of strata, holinks, and generalized holinks are homotopy invariants of fibrant spaces.
\end{prop}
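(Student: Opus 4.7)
The claim reduces, in all three cases, to showing that the functor $\Hol_{J_0}(-)\colon \Top_P \to \Top$ sends filtered homotopy equivalences to homotopy equivalences, since strata and ordinary holinks correspond to the special cases $\Delta^{J_0}=[p]$ and $\Delta^{J_0}=[p,q]$ respectively. So the substance of the proof is checking that $\Hol_{J_0}$ preserves filtered homotopies.

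Given a filtered homotopy $H\colon A\otimes \Delta^1\to B$ between $f,g\colon A\to B$, I would define
\begin{equation*}
\widetilde H\colon \Hol_{J_0}(A)\times \Real{\Delta^1}\to \Hol_{J_0}(B),\qquad \widetilde H(\phi,t)(x)=H(\phi(x),t).
\end{equation*}
This is well-defined at the set-theoretic level because the filtration on $A\otimes \Delta^1$ is the one pulled back from $A$ along the projection, so $H(\phi(x),t)$ lies in the same stratum as $\phi(x)$; in particular $\widetilde H(\phi,t)\colon \RealP{\Delta^{J_0}}\to B$ is filtered, hence an honest element of $\Hol_{J_0}(B)$. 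Continuity of $\widetilde H$ with respect to the compact-open topologies on $\Hol_{J_0}(A)$ and $\Hol_{J_0}(B)$ follows from the standard exponential law. Applying this construction to the homotopies $g\circ f\sim \Id_A$ and $f\circ g\sim \Id_B$ witnessing that $f$ is a filtered homotopy equivalence shows that $\Hol_{J_0}(g)$ is a two-sided homotopy inverse to $\Hol_{J_0}(f)$.

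Alternatively one can argue through the simplicial diagram functor already used in the paper: the remark preceding the proposition identifies $\Sing(\Hol_{J_0}(A))$ with $\Map(\RealP{\Delta^{J_0}},A)$, and the same argument as in the proof of Proposition \ref{HomotopyGroupInvariant} shows that $\Map(\RealP{\Delta^{J_0}},-)$ converts filtered homotopies into simplicial homotopies, yielding at least a weak homotopy equivalence after geometric realisation. There is no real obstacle here; the content of the proposition is essentially the observation that the functors $\Hol_{J_0}$ are part of the simplicial enrichment of $\Top_P$, so they automatically respect the notion of homotopy derived from that enrichment.
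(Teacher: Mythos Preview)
Your proof is correct. The paper does not give an explicit proof of this proposition; it simply states that the result follows from the preceding discussion, which identifies $\Sing(\Hol_{J_0}(A))$ with $\Map(\RealP{\Delta^{J_0}},A)$ and observes that the homotopy type of each piece of the diagram $D(A)$ is a filtered homotopy invariant. Your second paragraph reproduces exactly this argument.

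Your first paragraph is a more direct alternative: it works at the level of topological mapping spaces rather than routing through $\Sing$ and the simplicial enrichment, and in particular it makes no use of the fibrancy hypothesis. (Indeed, fibrancy plays no role here---it is presumably stated only because the surrounding discussion of invariants is phrased for fibrant spaces.) The direct approach is cleaner for this isolated statement; the paper's route has the advantage of placing the result inside the larger framework of the diagram functor $D$ that organises all of the invariants in this section.
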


Holinks are not easy to manipulate in general, but in the case of an isolated singularity, one can replace them by the link

\begin{prop}\label{HolingIsolatedSing}
Let $\{a\}\subseteq A$ be a conically stratified space over $P=\{p_0<p_1\}$, and let $L$ be the link of $\{a\}$. There exists a weak equivalence.
\begin{equation*}
L\simeq \Hol(A)
\end{equation*}
In particular, let $\{b\}\subseteq B$ be another such space with link $M$, and $f\colon A\to B$ be a filtered homotopy equivalence. Then $L$ and $M$ are weakly homotopy equivalent.
\end{prop}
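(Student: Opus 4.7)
The plan is to reduce to a local model via the conical structure at $a$, establish the homotopy equivalence on the model, and then transfer it back to $A$. Applying the conical stratification hypothesis at $a$ provides a space $C$, a filtered space $B$ over $P_{>p_0}=\{p_1\}$, and an open filtered embedding $\iota\colon C\times c(B)\hookrightarrow A$ whose image contains $a$. Since $B$ is filtered over a one-element poset it is just an ordinary topological space, and since $\iota(C\times\{0\})\subseteq A_{p_0}=\{a\}$ with $\iota$ injective, the factor $C$ collapses to a point. Identifying $L$ with $B$, I obtain an open filtered embedding $\iota\colon c(L)\hookrightarrow A$ sending the cone point to $a$.

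Next, I would establish a homotopy equivalence $L\simeq\Hol(c(L))$ by explicit maps. Writing points of $c(L)$ away from the cone as $[\ell,r]$ with $\ell\in L$ and $r\in(0,1)$, let $\pi_L\colon c(L)\setminus\{\text{cone}\}\to L$ be the projection, and define
\begin{equation*}
\sigma\colon L\to\Hol(c(L)),\ \ell\mapsto(t\mapsto[\ell,t/2]),\qquad\rho\colon\Hol(c(L))\to L,\ \gamma\mapsto\pi_L(\gamma(1)).
\end{equation*}
One checks $\rho\sigma=\Id_L$ directly, and, writing $\gamma(t)=[\ell(t),r(t)]$ for $t>0$, one obtains $\sigma\rho\simeq\Id$ by concatenating two filtered homotopies: the radial deformation $[\ell(t),(1-s)r(t)+st/2]$ and the reparametrization $[\ell((1-s)t+s),t/2]$, both extending continuously to the cone point at $t=0$ because the radial coordinate vanishes there.

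Then I would show that $\iota_*\colon\Hol(c(L))\to\Hol(A)$ is a weak equivalence. To see surjectivity on $\pi_n$, lift a map $S^n\to\Hol(A)$ by adjunction to a filtered map $\tilde\Gamma\colon\RealP{[p_0,p_1]}\otimes S^n\to A$; its restriction to $\{0\}\times S^n$ lands in $\{a\}\subseteq\iota(c(L))$, so by openness of $\iota(c(L))$ together with compactness of $S^n$ there exists $\epsilon>0$ with $\tilde\Gamma([0,\epsilon]\times S^n)\subseteq\iota(c(L))$. The rescaled map $(t,x)\mapsto\tilde\Gamma(\epsilon t,x)$ then lies in the image of $\iota_*$ and is filtered-homotopic to $\tilde\Gamma$ via the affine reparametrization, providing a preimage in $\pi_n(\Hol(c(L)))$; a relative version of the same argument on a cylinder $S^n\times[0,1]$ yields injectivity. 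For the \emph{in particular} statement, the functor $\Hol=\Cont_P(\RealP{[p_0,p_1]},-)$ sends the filtered cylinder $-\otimes\Delta^1$ to the ordinary topological cylinder, hence carries any filtered homotopy equivalence to an ordinary homotopy equivalence; combining with the weak equivalences above gives the chain $L\simeq\Hol(A)\simeq\Hol(B)\simeq M$.

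The main obstacle is the third step: producing a genuine continuous homotopy inverse to $\iota_*$, rather than only comparing homotopy classes on compact families, is the classical delicate point in holink theory since the natural path-shortening reparametrization depends only lower-semi-continuously on the path. The compact-family formulation above is however already enough for the weak-equivalence conclusion we need.
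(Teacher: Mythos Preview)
Your argument is correct and follows the same overall structure as the paper's proof: reduce to the local cone model, establish $L\simeq\Hol(c(L))$ by explicit maps, and then pass from $\Hol(c(L))$ to $\Hol(A)$. The explicit homotopy equivalence in your second step is essentially the same as the paper's (the paper uses $t\mapsto(l,t)$ and $\gamma\mapsto\pr_L(\gamma(1/2))$, which differs only by harmless reparametrization).

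The substantive difference is in the third step. The paper does not argue directly: it invokes \cite[Proposition~A.7.9]{HigherAlgebra} to obtain the weak equivalence $\Hol_{p_0,p_1}(U)\sim\Hol_{p_0,p_1}(A)$ for the conical neighborhood $U$. Your approach replaces this citation by an elementary compactness-and-rescaling argument: any compact family of filtered paths in $A$ must begin at $a$, hence lie in the open set $\iota(c(L))$ for small parameter values, and the affine reparametrization $t\mapsto\epsilon t$ provides the required filtered homotopy. This is more self-contained and makes the special feature of the isolated-singularity case (namely that the entire $p_0$-stratum is a single point lying inside the conical chart) completely transparent; the price is that it does not generalize as readily to non-isolated strata, which is exactly where the paper later needs the Lurie machinery anyway (see Remark~\ref{HolinkBoundary}). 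Your closing comment about path-shortening being only lower-semicontinuous correctly identifies why a global homotopy inverse is delicate, and you are right that the compact-family version suffices for the weak-equivalence statement actually claimed.

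For the \emph{in particular} clause, the paper routes through Propositions~\ref{ConStratFibrant} and~\ref{StrataInvariants}, whereas you observe directly that $\Cont_P(\RealP{[p_0,p_1]},-)$ carries filtered homotopies to ordinary homotopies. Your phrasing ``sends the filtered cylinder to the ordinary cylinder'' is slightly loose (one gets $\Hol(A)\times\Cont([0,1],[0,1])$ rather than $\Hol(A)\times[0,1]$), but composing with the constant-path inclusion $[0,1]\hookrightarrow\Cont([0,1],[0,1])$ gives exactly what is needed, so the conclusion stands.
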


\begin{proof}
Let $U\subset A$ be a neighborhood of $a$ such that $U\simeq c(L)$. By \cite[Proposition A.7.9]{HigherAlgebra}, there exists a weak equivalence $\Hol_{p_0,p_1}(c(L))\simeq\Hol_{p_0,p_1}(U)\sim \Hol_{p_0,p_1}(A)$, where \begin{equation*}
c(L)=L\times \Real{\ [p_0,p_1]\ }_{P}/(L\times \Real{\ [p_0]\ }_{P})
\end{equation*}
is the filtered cone on $L$. 
Now, consider the map 
\begin{align*}
L&\to \Hol_{p_0,p_1}(c(L))\\
l&\mapsto \left(\begin{array}{rcl}
\Real{\ [p_0,p_1]\ }_{P}&\to& c(L)\\
t&\mapsto &(l,t)
\end{array}\right)
\end{align*}
it is a homotopy equivalence whose inverse is given by the map
\begin{align*}
\Hol_{p_0,p_1}(c(L))&\to L\\
\gamma&\mapsto \pr_L(\gamma(1/2)).
\end{align*}
Where $\pr_L\colon c(L)\setminus \{0\}\to L$ is the projection from the cylinder to $L$.
To prove the second statement, just notice that the homotopy type of the link is well defined. Then, by Proposition \ref{ConStratFibrant}, $A$ and $B$ are fibrant. So it follows from proposition \ref{StrataInvariants}.
\end{proof}

\begin{remarque}\label{HolinkBoundary}
The first part of Proposition \ref{HolingIsolatedSing} can be generalized to any kind of singular stratum, provided we add some structure.
Consider the following construction. Let $S\subset A$ be a conically stratified set such that there exists some space $L$ and a $G$-fiber bundle $\bar{c}(L)\hookrightarrow N\xrightarrow{f} S$ whose total space $N$ is homeomorphic to a closed tubular neighborhood of $S$, where $G$ is the topological group of homeomorphisms of $L$ acting levelwise on $\bar{c}(L)$. Then, the map $f$ restricts to a fiber bundle $L\hookrightarrow \partial(N)\xrightarrow{f} S$. Let $x$ be a point in $\partial(N)$, and let $U$ be a trivializing neighborhood of $f$ containing $f(x)$, with the structure map $\varphi_U\colon f^{-1}(U)\to U\times c(L)$. We have $\varphi_U(x)=(f(x),(l,1))$ for some $l\in L$. We can construct a filtered map $\gamma_x\colon \Real{\ [p_0,p_1]\ }\to N$, given by $\gamma_x(t)=\varphi_U^{-1}((f(x),(l,t))$. Since we considered a $G$-fiber bundle with $G$ the group of homeomorphism of $L$, and not all homeomorphism of $\bar{c}(L)$, the map
\begin{align*}
\partial{N}&\to \Hol(N)\\
x&\mapsto \gamma_x
\end{align*} 
will be well-defined. One can check that this will be a homotopy equivalence, by the same kind of argument as in Proposition \ref{HolingIsolatedSing}. Furthermore, applying \cite[Proposition A.7.9]{HigherAlgebra} to a decomposition of $N$ into trivializing open subsets, we get a weak equivalence $\Hol(N)\sim \Hol(A)$. So, in this particular case, the holink can be replaced with the boundary of the tubular neighborhood to compute homotopy groups.
\end{remarque}

Even though the required property in Remark \ref{HolinkBoundary} can seem very specific, we will look at examples satisfying it by construction through the remainder of this paper, and we will use this replacement to compute homotopy groups.

\subsection{Other invariants}

Some other notable filtered homotopy invariants are the Intersection (co)-homology groups. See \cite[Corollary 4.1.11, Definition 2.9.10] {Friedman}. 

From \cite[Example 1.5]{MemoireDavid}, we know that intersection homology and intersection cohomology factor through the filtered simplicial set of filtered simplices $\Sing_P$, at least when $P$ is linear. More precisely, we have the following

\begin{prop}\label{IntersectionHomologyFactors}
Let $P=\{0,\dots,n\}$ be a linear poset, $\bar{p}$ be a perversity and $k$ be an integer. There exists a functor $G$ making the following diagram commute

\begin{equation*}
\begin{tikzcd}
\sS_P
\arrow[dashrightarrow]{r}{G}
&\text{Ab}
\\
\Top_P
\arrow{u}{\Sing_P}
\arrow[swap]{ur}{I^{\bar{p}}H_k}
\end{tikzcd}
\end{equation*}
And the same is true for intersection cohomology.
\end{prop}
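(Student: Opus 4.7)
The plan is to exhibit $G$ explicitly as an intersection-chain functor defined directly on filtered simplicial sets, and then to check that $G\circ\Sing_P$ recovers the usual intersection homology as defined via filtered singular simplices.

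First, I would translate the perversity condition into purely combinatorial data. Recall that a singular simplex $\sigma\colon \Delta^n\to A$ is $\bar{p}$-allowable if for every codimension $k$ the preimage $\sigma^{-1}(A_k)$ has dimension at most $n-k+\bar{p}(k)$. When $\sigma$ is a \emph{filtered} singular simplex, i.e. a morphism $\RealP{\Delta^J}\to A$ for some $\Delta^J=[p_{i_0},\dots,p_{i_N}]$, the preimage of the stratum $A_{p}$ is exactly the subsimplex of $\Real{\Delta^J}$ spanned by the vertices with label $p$. Hence its dimension depends only on $J$, not on $A$ or $\sigma$. This lets me define, for every $\Delta^J\in\Delta(P)$, a purely combinatorial predicate $\mathrm{All}_{\bar{p}}(J)$ that holds precisely when the corresponding filtered singular simplex is $\bar{p}$-allowable.

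Second, for an arbitrary filtered simplicial set $X$, I would define the chain complex
\begin{equation*}
I^{\bar{p}}C_n(X)=\bigoplus_{\substack{\Delta^J\in\Delta(P),\ N(J)=n\\ \mathrm{All}_{\bar{p}}(J)\text{ and }\mathrm{All}_{\bar{p}}(\partial J)}} \mathbb{Z}\cdot X_J / (\text{degeneracies}),
\end{equation*}
with the usual alternating-sum boundary inherited from the simplicial structure. (The condition on $\partial J$ is exactly the standard requirement that both $\sigma$ and $\partial\sigma$ be allowable, which is needed to make the differential close.) Setting $G(X)=H_k(I^{\bar{p}}C_*(X))$ produces a functor $\sS_P\to\mathrm{Ab}$. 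Naturality is automatic since a morphism $X\to Y$ in $\sS_P$ sends $J$-simplices to $J$-simplices and therefore preserves allowability.

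Third, commutativity of the diagram is verified by unwinding definitions: a $J$-simplex of $\Sing_P(A)$ is precisely a filtered singular simplex $\RealP{\Delta^J}\to A$, and by the first step these are in bijection with the $\bar{p}$-allowable filtered singular simplices whenever $\mathrm{All}_{\bar{p}}(J)$ holds. The chain complex $I^{\bar{p}}C_*(\Sing_P(A))$ therefore coincides with the intersection chain complex of $A$ built from filtered singular simplices (as in \cite[Example 1.5]{MemoireDavid}), and taking $H_k$ yields $I^{\bar{p}}H_k(A)$. The cohomological version is obtained by dualizing: set $G(X)=H^k(\mathrm{Hom}(I^{\bar{p}}C_*(X),\mathbb{Z}))$, and the identical argument shows the diagram commutes.

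The main obstacle I anticipate is verifying that the combinatorial allowability predicate $\mathrm{All}_{\bar{p}}$ is preserved under the simplicial boundary in exactly the same way as in the singular setting, which requires checking that deleting a vertex of $\Delta^J$ cannot worsen the dimension bound beyond what the classical theory already tolerates; but since the stratum-preimages in the filtered case are literal faces, this reduces to a direct count and is strictly easier than the corresponding singular verification. Linearity of $P$ enters only to guarantee the standard codimension indexing on which $\bar{p}$ is defined.
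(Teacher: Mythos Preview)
The paper does not give an independent proof of this proposition; it states it as a direct consequence of \cite[Example 1.5]{MemoireDavid}. Your approach is the natural one and is essentially how such a factorisation is constructed: the key observation, which you have correctly identified, is that for a \emph{filtered} singular simplex the allowability condition depends only on the combinatorial type $J$, so one may transport the entire intersection-chain construction to $\sS_P$ verbatim.

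Two small technical corrections are worth making. First, the preimage of the \emph{open} stratum $A_p$ under a filtered simplex $\RealP{\Delta^J}\to A$ is not a subsimplex; what \emph{is} a genuine face of $\Delta^J$ is the preimage of the closed skeleton $\varphi_A^{-1}(\{q\in P : q\leq p\})$, and it is this face whose dimension enters the standard allowability bound. Second, your direct-sum description of $I^{\bar p}C_n(X)$ imposes allowability on each simplex and all of its faces individually; the standard definition instead takes the subgroup of chains $\xi$ such that every simplex appearing in $\xi$ and every simplex appearing in $\partial\xi$ is allowable, which is in general strictly larger because non-allowable faces can cancel in $\partial\xi$. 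With the chain-level condition in place (and this is what makes $\partial$ close up, as you note), your functor $G$ is exactly what is needed, and $G\circ\Sing_P = I^{\bar p}H_k$ holds on the nose once intersection homology is taken in its filtered-singular-simplex incarnation.
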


In the case of spaces with isolated singularities, it is known from \cite[Section 3.2]{MemoireDavid} that intersection cohomology factors through the category of filtered diagrams, however it is not known if it does in the general case. More precisely :

\begin{problem}
Under the conditions of Proposition \ref{IntersectionHomologyFactors}, does there exists a functor $H$ such that the following diagram of functors commutes?
\begin{equation*}
\begin{tikzcd}
\Fun(\Delta(P)^{\op},\sS)
\arrow[dashrightarrow]{dr}{H}
&\phantom{X}
\\
\sS_P
\arrow{u}{D}
&\text{Ab}
\\
\Top_P
\arrow{u}{\Sing_P}
\arrow{ur}{I_{\bar{p}}H^k}
&\phantom{X}
\end{tikzcd}
\end{equation*}
\end{problem}

The intersection homotopy groups defined in \cite{Gajer} can also be seen to be invariants of the filtered homotopy type. Whether or not they factor through $\Sing_P$ is unknown by the author.

\subsection{Constructing filtered spaces}\label{ConstructingFilteredSpaces}

In this section, we show how to use fiber bundles to construct filtered spaces whose filtered homotopy groups can be computed in terms of the usual homotopy groups.

Let $G$ be some discrete group, and let $Q\to M$ be a principal $G$-fiber bundle. Then taking any topological space $F$ with some $G$-action, one get a $G$ fiber bundle on $M$ of the form
\begin{equation*}
F\hookrightarrow Q\times_{G}F=E\xrightarrow{f} M
\end{equation*}
Now, any action on $F$ extends to an action on $\bar{c}(F)$, the closed cone on $F$, via $g.(x,t)=(g.x,t)$ for $g\in G, x\in F$ and $t\in [0,1]$. This gives us the following $G$-fiber bundle
\begin{equation*}
\bar{c}(F)\hookrightarrow Q\times_{G}\bar{c}(F)=N\to M
\end{equation*}
But now, there is a canonical section $M\hookrightarrow N$ coming from the homeomorphism $M\simeq Q/G$, and there is an inclusion $E\hookrightarrow N$ coming from the inclusion $F\hookrightarrow\bar{c}(F)$ which sends $x\in F$ to $(x,1)\in \bar{c}(F)$. Taking any application $g\colon E\to E'$, and taking the following pushout
\begin{equation*}
\begin{tikzcd}
E
\arrow[hookrightarrow]{d}
\arrow{r}
&E'
\arrow{d}
\\
N
\arrow{r}
&A
\end{tikzcd}
\end{equation*}
One gets a space $A$ filtered over $P=\{p_0<p_1\}$ whose singular stratum is $M$, and which admits a tubular neighborhood homeomorphic to $N$ (consider the total space of the fiber bundle with fiber the cone on $F$ truncated at height $1/2$). This space is constructed in such a way that its filtered homotopy groups can be computed easily, provided those of the non-filtered spaces involved are known.

\begin{prop}\label{HomotopyGroupFiberBundle}
Let $A$ be the filtered space obtained from the previous construction. Then $A$ is fibrant and $ s\pi_n(A)$ is given by the following diagram, for all (filtered) spaces suitably pointed.
\begin{equation*}
\begin{tikzcd}
\phantom{X}
& \pi_n(E)
\arrow{dr}{d_0= \pi_n(f)}
\arrow[swap]{dl}{d_1= \pi_n(g)}
&\phantom{X}
\\
 \pi_n(M)
&\phantom{X}
& \pi_n(E')
\end{tikzcd}
\end{equation*}
where the top row is $s\pi_n(A)([p_0,p_1])$, the bottom left is $s\pi_n(A)([p_0])$ and the bottom right is $s\pi_n(A)([p_1])$.
\end{prop}

\begin{proof}
The filtered space $A$ is conically stratified, as seen by looking at trivializing open subsets for the fiber bundle $N$.
By Remark \ref{SkeletonHomotopyGroups}, we can identify $s\pi_n(A)$ with the sub-diagram containing the $s\pi_n(A)(\Delta^{J_0})$ with $\Delta^{J_0}$ a non-degenerate simplex of $N(P)$.
 In particular, we only have to compute $\pi_n(\Map(\Real{\ [p_0,p_1]\ }_P,A)$ and $ \pi_n(\Map(\Real{\ [p_i]\ }_P,A)$, $i=0,1$. By Lemma \ref{HolSing} and Remark \ref{HolinkBoundary}, the former is isomorphic to $\pi_n(E)$, and by lemma \ref{HolSing}, $\Map(\Real{\ [p_i]\ }_P,A)$  is isomorphic to $\Sing(A_{p_i})$, where $A_{p_0}=M$ is the regular part, and $A_{p_1}=A\setminus M$ is the regular part. By construction, the map $E\to A_{p_1}$ is homotopically equivalent to $E\to E'$, and so we get the desired diagram.
\end{proof}

\begin{remarque}\label{RemarkFiberBundle}
The space $N$ is homotopically equivalent to $M$, by construction, but not in a filtered way. This provides a generic way of constructing filtered spaces which are homotopically equivalent but not filtered homotopically equivalent. Some examples are given in the following sections.
\end{remarque}

\subsection{The filtered Moebius strip and the cylinder}\label{MoebiusCylindre}
\label{SubsectionMoebius}
Let us give an example of application of the previous construction.
Let $F_i\hookrightarrow S^1\times_{\mathbb{Z}/2\mathbb{Z}}F_i\xrightarrow{f_i} S^1$, $i=1,2$ be two $\mathbb{Z}/2\mathbb{Z}$ fiber bundles over $S^1$,
with $F_1=\{0\}$ and $f_1$ the identity map, and $f_2$ the twofold covering of $S^1$ by $S^1$. That is $F_2=\{0,1\}$ and $\mathbb{Z}/2\mathbb{Z}$ acting by permutation.
Applying the previous construction to both spaces, one get two filtered spaces, $A_1$ and $A_2$. 
The first is filtered homeomorphic to a cylinder with one of its boundary as its singular stratum : $S^1\times\{0\}\hookrightarrow S^1\times [0,1] $. The second, is filtered homeomorphic to a Moebius strip with an embeded circle as its singular stratum $S^1\times \{0\}/{\sim}\hookrightarrow S^1\times [-1,1]/{\sim}$, where $(x,t)\sim (-x,-t)$. But now, applying Proposition \ref{HomotopyGroupFiberBundle}, one gets $ s\pi_1(A_1)$ (on the left) and $ s\pi_1(A_2)$ (on the right).
\begin{equation*}
\begin{tikzcd}
\phantom{X}
&\Z
\arrow{dr}{\Id}
\arrow[swap]{dl}{\Id}
&\phantom{X}
&\phantom{X}
&\Z
\arrow[swap]{dl}{\times 2}
\arrow{dr}{\Id}
&\phantom{X}
\\
\Z
&\phantom{X}
&\Z
&\Z
&\phantom{X}
&\Z
\end{tikzcd}
\end{equation*}
Both spaces are homotopy equivalent to their singular strata, by Remark \ref{RemarkFiberBundle}, but since their first homotopy group are not isomorphic, they can not be filtered homotopically equivalent by Theorem \ref{Whitehead}.

\subsection{From fiber bundles to pseudo-manifolds}\label{PseudoManifoldExample}
One can build a much richer example, involving compact orientable pseudo-manifold, using ideas from the previous one. 
Let $S^1\hookrightarrow S^1\times S^1\xrightarrow{f_1} S^1$ be a trivial fiber bundle, and let 
$S^1\coprod S^1\hookrightarrow S^1\times_{\Z/2\Z} S^1\xrightarrow{f_2} S^1$ be the $\Z/2\Z$-fiber bundle where $\Z/2\Z$ acts on the fiber $S^1\coprod S^1$ by permutation of the copy of $S^1$. Both total spaces of those fiber bundles are homeomorphic to the torus $T$. Now, writing $T\hookrightarrow FT$ for the inclusion of the torus as the boundary of the solid torus, and applying the construction of subsection \ref{ConstructingFilteredSpaces}, one gets two spaces, $A_1$ and $A_2$, as the following pushouts 
\begin{equation}\label{PushoutExamplePseudoManifold}
\begin{tikzcd}
T
\arrow{r}
\arrow[hookrightarrow]{d}
& FT
\arrow{d}
&T
\arrow{r}
\arrow[hookrightarrow]{d}
& FT
\arrow{d}
\\
N_1
\arrow{r}
&A_1
&N_2
\arrow{r}
&A_2
\end{tikzcd}
\end{equation} 
The first filtered space, $A_1$ is filtered homeomorphic to the product of a circle with a sphere with one singular point : $S^1\times (\{x_0\}\subset S^2)$, and $A_2$ is filtered homeomorphic to the total space of a $\Z/2\Z$ fiber bundle over $S^1$, whose fiber are given by a wedge of two spheres with the action of $\mathbb{Z}/2\mathbb{Z}$ permuting the spheres. Both are compact pseudo-manifolds, and they are orientable because their regular stratum are homeomorphic to an open solid torus, which is orientable. Computing intersection homology with perversity $0$ and singular homology applying Mayer-Vietoris Theorem to the decomposition given by \eqref{PushoutExamplePseudoManifold}, one gets that 
\begin{equation*}
I^0H_{k}(A_1,\Z)\simeq H_k(A_1,\Z)\simeq I^0H_k(A_2,\Z)\simeq H_k(A_2,\Z)\simeq \left\{\begin{array}{rl}
\Z& \text{ if $0\leq k\leq 3$}\\
0& \text{ if $k<0$ or $k>3$}
\end{array}\right.
\end{equation*}

So the two filtered spaces are not distinguished by those invariants. But using Proposition \ref{HomotopyGroupFiberBundle}, one computes $ s\pi_1(A_1)$ and $ s\pi_1(A_2)$ (on the left and right respectively).
\begin{equation*}
\begin{tikzcd}
\phantom{X}
&\Z\oplus \Z
\arrow[swap]{dl}{\Id\oplus 0}
\arrow{dr}{\Id\oplus 0}
&\phantom{X}
&\phantom{X}
&\Z\oplus\Z
\arrow[swap]{dl}{\times 2\oplus 0}
\arrow{dr}{\Id\oplus 0}
&\phantom{X}
\\
\Z
&\phantom{X}
&\Z
&\Z
&\phantom{X}
&\Z
\end{tikzcd}
\end{equation*}
So, from Theorem \ref{Whitehead}, one deduces that there exists no filtered homotopy equivalence between $A_1$ and $A_2$.

\subsection{Diagram of groups}
Let $G$ be a group and $f\colon H\to G$ be the inclusion of any subgroup. Then, from $f$, one can construct a $G$ fiber bundle of the form,
\begin{equation*}
G/H\hookrightarrow EG\times_{G}G/H\to BG
\end{equation*}
Where $BG$ is a classifying space for $G$ and $EG\to BG$ is its associated universal cover. The total space $EG\times_{G}G/H$ is a classifying space for $H$ since it is the quotient of a contractible space by a free action of $H$. Choosing any homomorphism $g\colon H\to G'$, and applying the construction of subsection \ref{ConstructingFilteredSpaces}, we get the filtered space $A(f,g)$. Applying Proposition \ref{HomotopyGroupFiberBundle}, we get that the first filtered homotopy group of $A(f,g)$ is of the form
\begin{equation*}
\begin{tikzcd}
\phantom{X}
&H
\arrow{dr}{g}
\arrow[swap]{dl}{f}
&\phantom{X}
\\
G
&\phantom{X}
&G'
\end{tikzcd}
\end{equation*}
and that all higher filtered homotopy groups are trivial.
\begin{prop}\label{WhiteheadDiagrams}
Let $G_i,H_i,G'_i$ be groups, $f_i\colon H_i\to G_i$ be inclusions and $g_i\colon H_i\to G'_i$ be homomorphisms for $i=1,2$. Then, if $A(f_1,g_1)$ and $A(f_2,g_2)$ are filtered homotopy equivalent, there exists three isomorphisms $\varphi_H\colon H_1\to H_2$, $\varphi_G\colon G_1\to G_2$, and $\varphi_{G'}\colon G'_1\to G'_2$ such that the following diagram commutes
\begin{equation*}
\begin{tikzcd}
G_1
\arrow{d}{\varphi_G}
&H_1
\arrow{d}{\varphi_H}
\arrow{r}{g_1}
\arrow[swap]{l}{f_1}
& G'_1
\arrow{d}{\varphi_{G'}}
\\
G_2
&H_2
\arrow{r}{g_2}
\arrow{l}[swap]{f_2}
& G'_2
\end{tikzcd}
\end{equation*}
Conversely, if such a diagram exists, and if in addition $A(f_1,g_1)$ and $A(f_2,g_2)$ can be triangulated in a way compatible with their stratification, there exists a homotopy equivalence between $A(f_1,g_1)$ and $A(f_2,g_2)$.
\end{prop}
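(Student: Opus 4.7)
The plan is to handle the two implications separately, leaning on the computation of $s\pi_1\bigl(A(f,g)\bigr)$ from Proposition \ref{HomotopyGroupFiberBundle} and on the vanishing of all higher filtered homotopy groups. For the direct implication, suppose $\Phi\colon A(f_1,g_1)\to A(f_2,g_2)$ is a filtered homotopy equivalence. Both spaces are conically stratified, hence fibrant by Proposition \ref{ConStratFibrant}, so Corollary \ref{DirectWhitehead} gives an isomorphism $s\pi_1(\Phi)$ of filtered simplicial groups for any pointing of $A(f_1,g_1)$. By Proposition \ref{HomotopyGroupFiberBundle}, the non-degenerate simplices of the source (resp.\ target) are the diagram $G_1\xleftarrow{f_1}H_1\xrightarrow{g_1}G'_1$ (resp.\ its analogue), with $d_0=g_i$ and $d_1=f_i$. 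Evaluating $s\pi_1(\Phi)$ on the three non-degenerate simplices $[p_0]$, $[p_1]$, $[p_0,p_1]$ yields group isomorphisms $\varphi_G,\varphi_{G'},\varphi_H$; since $s\pi_1(\Phi)$ is a morphism of filtered simplicial groups it commutes with $d_0,d_1$, which is precisely the commutativity asserted in the proposition.

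For the reverse implication, I would build a filtered map $\Phi\colon A(f_1,g_1)\to A(f_2,g_2)$ directly and invoke Theorem \ref{AlmostMiller}. Step one: choose a $\varphi_G$-equivariant cellular map $EG_1\to EG_2$; it descends to $\Phi_G\colon BG_1\to BG_2$ realizing $\varphi_G$, and, via the bijection $G_1/H_1\cong G_2/H_2$ induced by the compatibility $\varphi_G\circ f_1=f_2\circ\varphi_H$, to a bundle map $E_1\to E_2$ covering $\Phi_G$. Extending along the cone coordinate gives a filtered map $\Phi_N\colon N_1\to N_2$ whose restriction to $\partial N_i\simeq BH_i$ realizes $\varphi_H$. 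Step two: since $BG'_i$ is a $K(G'_i,1)$, homotopy classes of maps into $BG'_2$ are classified by homomorphisms into $G'_2$; the relation $\varphi_{G'}\circ g_1=g_2\circ\varphi_H$ then says that $g_2\circ (\Phi_N|_{\partial N_1})$ and a chosen realization $\Phi_{G'}\circ g_1$ of $\varphi_{G'}$ represent the same homotopy class, so after a homotopy one may arrange strict commutativity. Step three: apply the universal property of the pushout defining $A(f_i,g_i)$ to glue $\Phi_N$ and $\Phi_{G'}$ into a filtered map $\Phi$.

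To conclude, I check the hypotheses of Theorem \ref{AlmostMiller}: $A(f_i,g_i)$ is conically stratified, the triangulability assumption provides the required filtered homeomorphism to a realization, the maps on strata are $\Phi_G$ and $\Phi_{G'}$ (hence weak equivalences), and the induced map on holinks is $\Phi_H$ via the identification of Remark \ref{HolinkBoundary} (hence a weak equivalence since $\varphi_H$ is an isomorphism and both are $K(\pi,1)$'s). Theorem \ref{AlmostMiller} then gives that $\Phi$ is a filtered homotopy equivalence.

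The main obstacle is step two of the reverse direction: translating the strictly commutative diagram of group isomorphisms into a \emph{strictly} commutative diagram of maps between classifying spaces, which is what the pushout construction requires. This is exactly where the Eilenberg-MacLane nature of all pieces is crucial: mapping spaces between $K(\pi,1)$'s have discrete $\pi_0$ controlled by $\mathrm{Hom}$ of fundamental groups, and one can absorb the (a priori only up-to-homotopy) commutativity into a cellular homotopy, which may then be used to replace one of the maps by a strictly commuting representative. The triangulability hypothesis enters only at the very end, to place us inside the scope of Theorem \ref{Whitehead}/\ref{AlmostMiller}.
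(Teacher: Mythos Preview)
Your proof is correct and follows essentially the same strategy as the paper's: both directions rest on the computation of $s\pi_1$ from Proposition~\ref{HomotopyGroupFiberBundle} together with a Whitehead-type theorem. The only differences are cosmetic. For the converse, the paper concludes with Theorem~\ref{Whitehead} rather than Theorem~\ref{AlmostMiller}, and it sidesteps the strict-commutativity issue you flag in step two by appealing to a \emph{functorial} model for the classifying-space construction, so that the maps $BG_1\to BG_2$, $EG_1\times_{G_1}G_1/H_1\to EG_2\times_{G_2}G_2/H_2$ and $BG'_1\to BG'_2$ commute on the nose from the outset; your homotopy-adjustment argument via the $K(\pi,1)$ property is a perfectly valid alternative that makes explicit what the paper leaves implicit.
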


\begin{proof}
By theorem \ref{Whitehead} any homotopy equivalence between $A(f_1,g_1)$ and $A(f_2,g_2)$ will induce such an isomorphism of diagrams. Conversely, suppose three such isomorphisms are given, since the construction of a classifying space can be done functorially, there are maps $B(\varphi_G)\colon BG_1\to BG_2$, $EG_1\times_{G_1}G_1/H_1\to EG_1\times_{G_2}G_2/H_2$ and $B(\varphi_{G'})\colon BG_1'\to BG_2'$. Furthermore, the two first maps induce a map between the tubular neighborhood obtained from the construction of subsection \ref{ConstructingFilteredSpaces}. Taking the pushout of those maps, we get $h\colon A(f_1,g_1)\to A(f_2,g_2)$, which induce isomorphisms on homotopy groups by construction. But then, by theorem \ref{Whitehead}, $h$ is a homotopy equivalence.
\end{proof}

\begin{exemple}
Looking back at the example of subsection \ref{MoebiusCylindre}, one sees that it was in fact the result of one such construction. Indeed choosing $f_1=\Id\colon \Z\to \Z$ and $f_2=\times 2\colon \Z\to \Z$, (and $g_i=\Id$ for $i=1,2$) one recovers the stratified spaces from subsection \ref{MoebiusCylindre}. But then the observation that they can not be filtered homotopic is just a particular case of Proposition \ref{WhiteheadDiagrams}.
\end{exemple}

\begin{remarque}
Though it is tempting to say that Proposition \ref{WhiteheadDiagrams} implies that there exists a well defined classifying space for any diagram of groups of a suitable form, it should be noted that this statement is not implied by Proposition \ref{WhiteheadDiagrams}. Indeed, Proposition \ref{WhiteheadDiagrams} gives us information only for spaces obtained from the construction of this subsection. It is not known whether a space with the same filtered homotopy groups as $A(f,g)$ will be of the filtered homotopy type of $A(f,g)$.
\end{remarque}

Proposition \ref{WhiteheadDiagrams} allows us to produce many examples of filtered spaces having homotopy equivalent strata but that are not filtered homotopy equivalent to each others. All one needs is a pair of inclusions of subgroups $H_1\hookrightarrow G_1$ and $H_2\hookrightarrow G_2$ such that $H_1$ is isomorphic to $H_2$ and $G_1$ is isomorphic to $G_2$ but there exists no commutative diagram of the form
\begin{equation*}
\begin{tikzcd}
H_1
\arrow[hookrightarrow]{d}
\arrow{r}{\varphi_H}
&H_2
\arrow[hookrightarrow]{d}
\\
G_1
\arrow{r}{\varphi_G}
&G_2
\end{tikzcd}
\end{equation*}
where $\varphi_H$ and $\varphi_G$ are both isomorphism. For the reader who may wonder how easy it is to produce such examples, a family of such pairs is provided in the next example. 

\begin{exemple}
Let $q$ be a prime number, and $n\geq 3$ an integer. Let $G$ be the group of upper unitriangular matrices of size $n$ with coefficients in $\Z/q\Z$. Then, let $H_1$ and $H_2$ be the subgroups generated by $M_1$ and $M_2$ respectively where
\begin{equation*}
M_1= 
\begin{bmatrix}
1 		&0		& \dots 	& 1\\
0 		&1		& \dots 	& 0\\
\vdots	&\vdots	& \ddots	& \vdots\\
0		&0		& \dots		& 1
\end{bmatrix}
\text{ and } M_2=
\begin{bmatrix}
1 		&0		& \dots 	& 0\\
0 		&1		& \dots 	& 1\\
\vdots	&\vdots	& \ddots	& \vdots\\
0		&0		& \dots		& 1
\end{bmatrix}.
\end{equation*}
We have $H_1\simeq H_2\simeq \Z/q\Z$. But, notice that $H_1$ is included in the center of $G$, whereas $H_2$ is not. In particular, any automorphism of $G$ will send $H_1$ to a subgroup of the center of $G$, but not $H_2$, and so there exists no isomorphisms $\varphi_G\colon G\to G$ and $\varphi_H\colon H_1\to H_2$ such that the following diagram commutes.
\begin{equation*}
\begin{tikzcd}
H_1
\arrow{r}{\varphi_H}
\arrow[hookrightarrow]{d}
&H_2
\arrow[hookrightarrow]{d}
\\
G
\arrow{r}{\varphi_G}
&G
\end{tikzcd}
\end{equation*}
And so, by proposition \ref{WhiteheadDiagrams}, the filtered spaces corresponding to those inclusions are not filtered homotopy equivalent. 
\end{exemple}
\appendix

\section{Characterizing fibrations in a presheaf category}
\label{AppendixA}
In this section, we prove that under certain conditions, one can characterize the class of fibrations in a Cisinski model structure \cite[Théorème 1.3.22]{Cisinski} (Theorem \ref{TheoAppendixA}). We will follow a proof of Cisinski for the case of simplicial sets (see \cite[Proposition 2.1.41]{Cisinski}), and extend it to any category of presheaf over an Eilenberg-Zilber category, equipped with a pair of adjoint functors satisfying some properties similar to the pair of functor $(\sd,\Ex)$. We refer the reader to \cite[Definition 1.3.1]{CisinskiHigherCategories} for a definition of Eilenberg-Zilber categories. We will only need the following facts about those categories :
\begin{itemize}
\item The category $\Delta$ is an Eilenberg-Zilber category \cite[Example 1.3.2]{CisinskiHigherCategories}
\item If $A$ is an Eilmenberg-Zilber category, and $X$ is a presheaf over $A$, $A/X$ is an Eilenberg-Zilber category \cite[Example 1.3.3]{CisinskiHigherCategories}. In particular, $\Delta(P)$ is an Eilenberg-Zilber category,
\end{itemize}
together with the following lemma :
\begin{lemme}\label{LemmeEilenberg}
Let $F,G\colon A\to \mathcal{C}$ be two functors from an Eilenberg-Zilber category to a model category. Write $F_{!}$ and $G_{!}$ their Kan extensions. Assume that $F_{!}$ and $G_{!}$ map monomorphisms to cofibrations. Then, if a natural transformation $u\colon F\to G$ induces weak equivalences $u_a\colon F(a)\to G(a)$ for all representable presheaves $a$, it induces weak equivalences $u_X\colon F_{!}(X)\to G_{!}(X)$ for all presheaves.
\end{lemme}

\begin{proof}
Consider the class of objects of $\widehat{A}$ such that $u_X$ is a weak equivalence. By assumption, it contains all representable presheaves. By \cite[Corollary 2.3.16, 2.3.18 and 2.3.29]{CisinskiHigherCategories}, this class is saturated by monomorphisms \cite[Definition 1.3.9]{CisinskiHigherCategories}, thus by \cite[Corollary 1.3.10]{CisinskiHigherCategories}, it must contain all presheaves.
\end{proof}

\begin{theo}\label{TheoAppendixA}
Let $A$ be a small Eilenberg-Zilber category, and consider $\widehat{A}$ as a model category, where the model structure comes from some homotopical data $(I,\Lambda)$. Assume that there exists a set of cofibration $\An$, such that $\Lambda=l(r(\An))$, and such that for all $j\colon K\to L\in \An$, $K$ and $L$ are compact objects of $\widehat{A}$. Assume in addition that we are given:
\begin{itemize}
\item a functor $A\to \widehat{A}$ whose Kan extension is written $S\colon \widehat{A}\to\widehat{A}$,
\item a right adjoint to $S$, $E\colon \widehat{A}\to\widehat{A}$,
\item a natural transformation $\alpha\colon S\to \Id$.
\end{itemize}
Write $\beta\colon \Id\to E$ for the natural transformation adjoint to $\alpha$, and write $\Ei$ for the following colimit:
\begin{equation*}
\begin{tikzcd}
\Id
\arrow{r}{\beta}
&E
\arrow{r}{\beta_{E(-)}}
&E^2
\arrow{r}{\beta_{E^2(-)}}
&\dots
\arrow{r}
&E^n
\arrow{r}
&E^{n+1}
\arrow{r}
&\dots
\end{tikzcd}
\end{equation*}
If in addition 
\begin{itemize}
\item the functor $S$ preserves monomorphisms and anodyne extensions,
\item for all representable presheaf $a$, $\alpha_a\colon S(a)\to a$ is an absolute weak equivalence (see \cite[Definition 1.3.55]{Cisinski}) and an epimorphism,
\item for all presheaf $X$, $\Ei(X)$ is fibrant,
\end{itemize}
then, the class of naive fibrations and the class of fibrations coincide. In this case, $\An$ is a set of generating trivial cofibrations, and we have equality between the following classes
\begin{equation*}
l(r(\An))=\Lambda=l(\{\text{naive fibrations}\})=l(\{\text{fibrations}\})=\{\text{trivial cofibrations}\}
\end{equation*}
\end{theo}

\begin{lemme}For any presheaf of $\widehat{A}$, $X$, $\alpha_X\colon S(X)\to X$ is a weak equivalence. Furthermore, the functor $S$ preserves trivial cofibrations.
\end{lemme}

\begin{proof}
The first assertion follows from the previous lemma by taking $F_{!}=S$ and $G_{!}=\Id$. Let $f\colon X\to Y$ be a trivial cofibration. Consider the following commutative diagram :
\begin{equation*}
\begin{tikzcd}
S(X)
\arrow{r}{S(f)}
\arrow[swap]{d}{\alpha_X}
&S(Y)
\arrow{d}{\alpha_Y}
\\
X
\arrow{r}{f}
&Y
\end{tikzcd}
\end{equation*}
Since $f$ and the vertical maps are weak equivalences, so is $S(f)$ by two out of three. Since $S(f)$ preserves cofibrations by assumption, $S(f)$ preserves trivial cofibrations.
\end{proof}

\begin{defin}
Let $Y$ be some fixed presheaf of $\widehat{A}$. Define $S^Y\colon \widehat{A}/Y\to\widehat{A}/Y$ as
\begin{equation*}
S^Y(X,f\colon X\to Y)=(S(X),f\circ\alpha_X\colon S(X)\to Y),
\end{equation*}
and the natural transformation $\alpha^Y\colon S^Y\to \Id$ as $\alpha^Y_{(X,f\colon X\to Y)}=\alpha_X$. Similarly, define $E^Y$ as
\begin{equation*}
E^Y(X,f\colon X\to Y)=(Y\times_{E(Y)}E(X),\pr_Y\colon Y\times_{E(Y)}E(X)\to Y),
\end{equation*}
and the natural transformation $\beta^Y\colon \Id\to E^Y$ as $\beta^Y_{(X,f)}=(f,\beta_X)\colon (X,f)\to (Y\times_{E(Y)}E(X),\pr_Y)$.
\end{defin}

\begin{lemme}
Let $Y$ be some presheaf of $\widehat{A}$. $E^Y$ is right adjoint to $S^Y$. Furthermore, if $f\colon X\to Y$ is a naive fibration, then $\beta^Y_{(X,f)}\colon X\to Y\times_{E(Y)}E(X)$ is an anodyne extension in $\widehat{A}$.
\end{lemme}

\begin{proof}
The first part of the lemma follows from the definitions. For the second part, let $\C=\widehat{A}/Y=\widehat{A/Y}$ be the category of presheaves over $Y$, and consider on $\C$ the Cisinski model structure induced by the one on $\widehat{A}$. In particular, the class of anodynes extension of $\C$ is $\Lambda/Y$. By assumption, $\alpha^Y_{(a,f\colon a\to Y)}$ is a weak equivalence for all representable presheaf $a\in \widehat{A}$, thus by lemma \ref{LemmeEilenberg}, $\alpha^Y_{(X,f\colon X\to Y)}$ is a weak equivalence for all objects of $\C$. Let $g\colon (X,f)\to (X',f')$ be some map in $\C$, consider the following commutative diagram.
\begin{equation*}
\begin{tikzcd}
S^Y(X,f)
\arrow{r}{S^Y(g)}
\arrow[swap]{d}{\alpha^Y_{(X,f)}}
&S^Y(X',f')
\arrow{d}{\alpha^Y_{(X',f')}}
\\
(X,f)
\arrow{r}{g}
&(X',f')
\end{tikzcd}
\end{equation*}
Since the vertical maps are weak equivalences in $\C$, $S^Y(g)$ is a weak equivalence if and only if $g$ is. Since $S^Y$ preserves cofibration, this implies that $S^Y$ is a left Quillen functor. Let $f\colon X\to Y$ be some naive fibration in $\widehat{A}$, this means that $(X,f)$ is a fibrant object in $\C$. Now by \cite[Corollary 1.4.4 (b)]{Hovey}, this implies that $\beta^Y_{(X,f)}\colon (X,f)\to E^Y(X,f)$ is a weak equivalence.  Furthermore, since for all representable presheaf $S(a)\to a$ is an epimorphism, $\beta_Y\colon Y\to E(Y)$ must be a monomorphism, which implies that $\beta^Y_{(X,f)}\colon (X,f)\to E^Y(X,f)$ is a monomorphism, so it is a trivial cofibration. Now since $S$ preserve anodyne extensions, $E$ preserves naive fibrations, which means $E(f)\colon E(X)\to E(Y)$ is a naive fibration, and so is $Y\times_{E(Y)}E(X)\to Y$. In particular, $\beta^Y_{(X,f)}\colon (X,f)\to E^Y(X,f)$ is a trivial cofibration between fibrant objects. By \cite[Lemme 1.3.39]{Cisinski}, it is an anodyne extension in $\C$. But then, the underlying map $\beta^Y\colon X\to Y\times_{E(Y)}E(X)$ is an anodyne extension of $\widehat{A}$.
\end{proof}

\begin{lemme}\label{LemmeBetaAnodyneExtension}
If $f\colon X\to Y$ is a naive fibration, $X\to Y\times_{\Ei(Y)}\Ei(X)$ is an anodyne extension.
\end{lemme}

\begin{proof}
The map $X\to Y\times_{\Ei(Y)}\Ei(X)$ is the transfinite composition
\begin{equation}\label{EquationTransfiniteComposition}
X\to Y\times_{E(Y)}E(X)\to Y\times_{E^2(Y)}E^2(X)\to \dots \to Y\times_{E^n(Y)}E^n(X)\to \dots
\end{equation}
Using the identification
\begin{equation*}
Y\times _{E(Y)}E\left(Y\times_{E^n(Y)}E^n(X)\right)\simeq Y\times_{E^{n+1}(Y)}E^{n+1}(X)
\end{equation*}
and the fact that $E^n$ preserves naive fibrations, we get by the previous lemma that (\ref{EquationTransfiniteComposition}) is a transfinite composition of anodyne extensions.
\end{proof}

\begin{lemme}\label{LemmeEiPreservesFibrations}
If $f\colon X\to Y$ is a naive fibration, $\Ei(f)\colon \Ei(X)\to\Ei(Y)$ is a fibration.
\end{lemme}

\begin{proof}
By assumption, $\Ei(X)$ and $\Ei(Y)$ are fibrant, so by \cite[Proposition 1.3.36]{Cisinski}, it is enough to show that $\Ei(f)$ is a naive fibration. Consider the following lifting problem with $j\colon K\to L\in \An$
\begin{equation*}
\begin{tikzcd}
K
\arrow[swap]{d}{j}
\arrow{r}{k}
&\Ei(X)
\arrow{d}{\Ei(f)}
\\
L
\arrow{r}{l}
&\Ei(Y)
\end{tikzcd}
\end{equation*}
Since $K$ and $L$ are assumed to be compact, there must exist some integer $n$ such that the previous lifting problem factors as follows.
\begin{equation*}
\begin{tikzcd}
K
\arrow[swap]{d}{j}
\arrow{r}{k}
&E^n(X)
\arrow{d}{E^n(f)}
\\
L
\arrow{r}{l}
&E^n(Y)
\end{tikzcd}
\end{equation*}
Since $E$ preserves naive fibration, so does $E^n$ and a lift must exist. 
\end{proof}

\begin{proof}[Proof of Theorem \ref{TheoAppendixA}]
By \cite[Proposition 1.3.47]{Cisinski}, it is enough to show that any naive fibration $f\colon X\to Y$ admits a factorisation of the form $f=qj$ where $j$ is an anodyne extension and $q$ is a fibration. Consider the following factorisation.
\begin{equation*}
\begin{tikzcd}
X
\arrow{r}{j}
\arrow[swap]{dr}{f}
&Y\times_{\Ei(Y)}\Ei(X)
\arrow{d}{q}
\arrow{r}
&\Ei(X)
\arrow{d}{\Ei(f)}
\\
&Y
\arrow{r}
&\Ei(Y)
\end{tikzcd}
\end{equation*}
By lemma \ref{LemmeEiPreservesFibrations}, $q$ is a fibration since it is the pullback of a fibration. By lemma \ref{LemmeBetaAnodyneExtension}, $j$ is an anodyne extension.
\end{proof}
\bibliographystyle{alpha}
\bibliography{biblio}

\end{document}